\numberwithin{equation}{section}
\theoremstyle{plain}
\newtheorem{thm}{Theorem}[section]
\newtheorem{lem}[thm]{Lemma}
\newtheorem{prop}[thm]{Proposition}
\newtheorem{coro}[thm]{Corollary}
\theoremstyle{definition}
\newtheorem{defi}[thm]{Definition}
\newtheorem{rem}[thm]{Remark}
\crefname{thm}{Theorem}{Theorems}
\crefname{lem}{Lemma}{Lemmas}
\crefname{prop}{Proposition}{Propositions}
\crefname{coro}{Corollary}{Corollaries}
\crefname{defi}{Definition}{Definitions}
\crefname{rem}{Remark}{Remarks}
\crefname{ex}{Example}{Examples}
\newcommand\restr[2]{%
  \left.\kern-\nulldelimiterspace
  #1
  \vphantom{\big|}
  \right|_{#2}
}
\renewcommand\paragraph{\@startsection{paragraph}{4}%
  \z@{3pt}{1pt}%
  {\normalfont\itshape\small}}
\title{ Asymptotic Expansion for Expanding Spherical Averages in Real Rank One Spaces}
\author{Zhiyuan Deng}
\address{Institute of Mathematics,
University of Zurich,
Switzerland}
\email{zhiyuan.deng@math.uzh.ch}
\thanks{
Zhiyuan Deng was supported by the Swiss National Science Foundation (SNF)
under Grant No.~200020-212617.
}
\author{Yutian Sun}
\address{School of Mathematics, Shandong University 
Jinan 250100, Shandong, P. R. China }
\email{yutian.sun@mail.sdu.edu.cn}
\begin{document}

\begin{abstract}
This paper establishes an asymptotic expansion for expanding sector average on compact quotients of real rank-one Lie groups, with $SO(n,1)$ as a model case. Using tools from harmonic analysis and representation theory, the problem is reduced, via the action of the Casimir operator, to an ordinary differential equation(ODE) governing the evolution along the expanding $A$-direction. The asymptotic expansion is then derived from the analysis of this ODE.
\end{abstract}

\maketitle
\tableofcontents
\newpage

\section{Introduction}

One of the central themes in homogeneous dynamics is to understand how expanding submanifolds distribute under the action of flows.  
One asks whether such submanifolds become uniformly distributed in the ambient space, and if so, at what rate.  
The study of these equidistribution properties has been one of the major achievements in ergodic theory and dynamics system.

In this paper we work in the real rank-one setting.
Let $G$ be a connected semisimple Lie group of real rank one with finite center, and fix a maximal compact subgroup $K \subset G$.
We use the Iwasawa decomposition $G = KAN$, where $A = \{ a_t : t \in \mathbb{R} \}$ is a one-parameter Cartan subgroup. Here $N$ denotes the nilpotent subgroup corresponding to the sum of the positive restricted root spaces with respect to the fixed choice of the maximal abelian subspace $\mathfrak{a}\subset \mathfrak{g}$ and a choice of positive restricted roots $\Sigma^+ \subset \Sigma(\mathfrak{g},\mathfrak{a})$, namely $N = \exp(\mathfrak{n})$, where $\mathfrak{n} = \sum_{\alpha \in \Sigma^+} \mathfrak{g}_\alpha$.
The associated minimal parabolic subgroup is denoted by $P = MAN$, where the Levi component $M = Z_K(A)$ is the centralizer of $A$ in $K$.
Throughout the paper we write $X := \Gamma \backslash G$ for the compact homogeneous space associated with a cocompact lattice $\Gamma < G$.

For concreteness, we focus on the model case $G = \mathrm{SO}(n,1)^{\circ}$, acting by isometries on real hyperbolic $n$-space. 
In this realization, $K = \mathrm{SO}(n)$ and $G/K \simeq \mathbb{H}^n$. 
The element $a_t \in A$ acts on $\mathbb{H}^n$ as translations by hyperbolic distance $t$ along geodesics.

Let $(\pi,H)$ be a unitary representation of $G$. Given $v \in H$ and a density function $\varphi \in C^{\infty}(K)$, we define the sector average
\begin{equation*}
I(t,v,\varphi)
:=
\int_{K} \varphi(k)\,\pi(k a_{t})\,v \, dk ,
\end{equation*}
which is well defined since $k \mapsto \pi(k a_{t})v$ is continuous on the compact group $K$.

In the particular case of the regular representation on $L^{2}(X)$, defined by
$\pi(g)f(x) = f(xg)$, the averaging operator reduces to the standard pointwise
$K$-average along the $A$-orbit in $X$. 
More precisely, for $f \in L^{2}(X)$, the function
\begin{equation}\label{equation:pointwise_average}
I(t,f,\varphi)(x)
=
\int_{K} \varphi(k)\, f(x k a_{t}) \, dk
\end{equation}
is well defined for almost every $x \in X$.

The main topic of this paper is to discuss the asymptotic expansion of such average $I(t,f,\varphi)(x)$ as $t\to \infty$.  

Those type of averages are closely related to classical problems on the distribution of expanding orbits in homogeneous spaces. Early proofs often relied on Margulis’ thickening trick combined with mixing of the geodesic flow~\cite{Margulis2004}.  Later Eskin and McMullen~\cite{EskinMcMullenmixingcouting} proved equidistribution of certain symmetric-subgroup orbits in hyperbolic spaces by exploiting the decay of matrix coefficients.   The thickening method introduces an unavoidable error, so it cannot yield the sharp error estimate for the original orbit.

Later work showed that spectral and representation-theoretic methods control equidistribution directly through the spectral gap of the $G$-action on $L^{2}(\Gamma\backslash G)$, making effective rates accessible. 

We introduce a simplified version of the estimate from Burger for equidistribution of horocycle orbit~\cite{Burgerhorocyclegeofinite}.
In the model case $G=\mathrm{SL}(2,\mathbb{R})$, fix a cocompact lattice $\Gamma<G$ and write
$X:=\Gamma\backslash G$. Let
\begin{equation*}
a_t:=
\begin{pmatrix}
e^{t/2} & 0\\
0 & e^{-t/2}
\end{pmatrix},
\qquad
u_s:=
\begin{pmatrix}
1 & s\\
0 & 1
\end{pmatrix},
\qquad
U:=\{u_s:s\in\mathbb{R}\}.
\end{equation*}
For $f\in C^{\infty}(X)$, $x\in X$, and a compactly supported smooth function
$\varphi\in C_c^{\infty}(\mathbb{R})$, define the weighted expanding horocycle average
\begin{equation*}
H(t,f,\varphi)(x):=\int_{\mathbb{R}} \varphi(s)\,f(xu_s a_t)\,ds.
\end{equation*}
A representative Burger-type effective estimate~\cite{Burgerhorocyclegeofinite} has the form
\begin{equation*}
H(t,f,\varphi)(x)
=
\left(\int_{\mathbb{R}}\varphi(s)\,ds\right)\left(\int_X f\,d\mu_X\right)
+O_{x,f,\varphi}(e^{-\eta t}),
\qquad t\to\infty,
\end{equation*}
where $\mu_X$ is the $G$-invariant probability measure on $X$, $\eta>0$ is determined by spectral data, and the implied constant in $O_{x,f,\varphi}(\cdot)$ depends on $x$, $f$, and $\varphi$. Thus one obtains effective equidistribution with a single decay-rate error term, but not a full higher-order asymptotic expansion.

Strömbergsson~\cite{Strombergsson2013DeviationHorocycle} refined this framework for cofinite lattices in $\mathrm{SL}(2,\mathbb{R})$ by incorporating the continuous spectrum, leading to sharper effective bounds for expanding horocycle averages. Edwards~\cite{edwardsrateofexpandinghorospheres} generalized this approach to expanding horospheres in semisimple groups with finite-volume quotient, obtaining effective rates controlled by exponential mixing and Sobolev norms. Analogous computations appear to extend to symmetric-subgroup orbits, as indicated in an unfinished sketch available online~\cite{edwards2018symmetricsubgroup}.
Lutsko~\cite{Lutsko2022SpectralHorospherical} also developed an abstract spectral approach to horospherical equidistribution with explicit error terms. 

More recently, Corso and Ravotti~\cite{Davidehorocycle2020,Corso2022LargeHC} (see also Flaminio and Forni~\cite{Flaminioforni})
studied expanding horocycle orbits and expanding hyperbolic circles in
$\mathrm{SL}(2,\mathbb{R})$, obtaining an asymptotic expansion together with quantitative bounds of the remainder term via spectral decomposition. 
In contrast, due to the new technical difficulties occurring in higher dimension, 
 our work requires a complex iterative scheme discussed in Section~\ref{Section:Asymp_Irreducible_repnt_with_pure_imginary}, which also  produces a fully explicit asymptotic expansion for the sector average~\eqref{equation:pointwise_average}.
 
\medskip
\noindent
\textbf{Main Result}
 We present below a simplified version of the result to suppress technical notation. The full statement is given in Theorem~\ref{theorem:main_vector}.

Denote $\|\cdot\|_{W_{s,2}}$ as the Sobolev norm of $f$ of order $s$ with respect to $L^2$-norm on $X$, more details in Section~\ref{sec:sobolev}. We define \begin{equation}\label{equation:L^2orthogonal_constant}
    L^{2}_{0}(X)
:=
\left\{ f \in L^2(X) : \langle f,1\rangle = 0 \right\},
\end{equation}
and $\nu(\Gamma)>0$ is the spectral gap parameter associated with the
right regular representation of $G$ on $L^{2}_{0}(X)$, see more details in Section~\ref{representationtheory}; it
governs the exponential decay rate of matrix coefficients for this
representation.
\begin{thm}[Pointwise asymptotic expansion]
There exists a countable family of complex numbers
$\{\lambda_j\}_{j\ge1}$ satisfying $\Re(\lambda_j)
\le
\frac{1-n}{2}+\nu(\Gamma),\ j\ge1.$

Let $\ell>\frac{n-1}{2}+\nu(\Gamma)$ be an integer and let
$\varphi\in C^\infty(K)$ be a density function.
Then there exists an integer $\beta_\ell>0$ such that, for every
$f\in W_{\beta_\ell,2}(X)$, the sector average
\begin{equation*}
I(t,f,\varphi)(x)
=
\int_K \varphi(k)\,f(xka_t)\,dk
\end{equation*}
admits the following pointwise asymptotic expansion:
\begin{equation*}
I(t,f,\varphi)(x)
=
\sum_{j\ge1}
\sum_{m=0}^{2\ell}
\bigl(C_{j,m}(f,\varphi)(x)+ t\,C^{p}_{j,m}(f,\varphi)(x)\bigr)
e^{(\lambda_j-m)t}
\;+\;
\mathcal{R}_\ell(t,f,\varphi)(x),
\end{equation*}
where the coefficient functions
$C_{j,m}(f,\varphi)$ and $C_{j,m}^{p}(f,\varphi)$
belong to $W_{\beta_\ell,2}(X)$, and the double series over $j$ and $m$ converges uniformly for each
$x\in X$.
Moreover, the remainder term satisfies the uniform bound:
\begin{equation*}
\sup_{x\in X}\bigl|\mathcal{R}_\ell(t,f,\varphi)(x)\bigr|
\;\ll_{\Gamma,n,\ell}\;
(1+t)\,e^{(1-\ell) t}\,
\|f\|_{W_{\beta_\ell,2}}\,
\|\varphi\|_{W_{\beta_\ell}},
\qquad t\ge0.
\end{equation*}
\end{thm}

\medskip
\noindent
\textbf{Method of Proof}
Our main idea is to use the Casimir operator $\Omega_{G}$ of $G$ to derive a second-order ODE satisfied by the sector average $I(t,v,\varphi)$, where $v$ is an eigenvector of $\Omega_{G}$. Then after a spectral summation, we obtain the result for the general unitary representation. 

To construct this ODE in a form that can be solved explicitly, we employ the expression of $\Omega_{G}$ in Iwasawa coordinates. The Iwasawa coordinates formula for $\Omega_{G}$ that we compute in Section~\ref{section:casimiroperator_generalliegroup} is valid for any connected semisimple Lie group with finite center. 
As a consequence, the resulting equation involves both the Casimir operator $\Omega_{G}$ and  $\Omega_{M}$ for the subgroup $M$. Therefore, we require the vector $v$ to be a joint eigenvector of $\Omega_{G}$ and $\Omega_{M}$. Accordingly, it is necessary to decompose the unitary representation into $G$- and $M$-irreducible representations.

Later, we  specialize to $G = \mathrm{SO}(n,1)^{\circ}$ with $n > 2$, since for $n = 2$ the Levi factor is $M = \{\pm I\}$.  
The corresponding asymptotic analysis for sector average in the case $G = \mathrm{SL}(2,\mathbb{R})$ has already been carried out by Corso-Ravotti~\cite{Corso2022LargeHC}.

 Our method proceeds by resolving the $K$-average into $M$-types and analyzing each component through the joint eigenvalues of the Casimir operators $(\Omega_G,\Omega_M)$. This yields, for each spectral component, a second-order ODE with explicitly determined coefficients, allowing the individual spectral contributions to be identified and assembled into a full asymptotic expansion.
A crucial additional ingredient in our approach is the use of Weyl-type counting estimates for the joint spectrum. These provide quantitative control on the growth of spectral parameters and multiplicities, which is necessary to ensure convergence of the resulting series and to obtain uniform bounds for the remainder terms.

Compared with the approach of Edwards~\cite{edwards2018symmetricsubgroup,edwardsrateofexpandinghorospheres,SamuelC.Edwards2017JournalofModernDynamics}, his work is based on a global representation-theoretic analysis that yields effective equidistribution with explicit optimal error terms, but without an explicit term-by-term reconstruction of the full spectral expansion. For that reason, his method does not require Weyl-law input in the same explicit form as ours, and it leads to an effective estimate rather than a full asymptotic expansion.

For semisimple groups of higher real rank, it would be natural to extend the present analysis to the action of higher-dimensional split tori $A$, going beyond the one-parameter geodesic flow. We plan to pursue this direction in future work.

\medskip
\noindent
\textbf{Outline of This Paper}
Section~\ref{section:casimiroperator_generalliegroup} computes the explicit expression of Casimir operator for general connected Lie group with finite centre in Iwasawa coordinates.
Section~\ref{Section:perliminaries} develops the harmonic–analytic framework on $\mathrm{SO}(n,1)$, including its Lie algebra structure, representation theory, and a Weyl-type upper bound.  Using the Iwasawa coordinates expression of Casimir operator,
Section~\ref{section:construction_of_ODE} derives an ODE governing the sector average $I(t,v,\varphi)$, together with its solutions and estimates for the Sobolev norms of $I(t,v,\varphi)$.
Section~\ref{Section:Asymp_Irreducible_repnt_with_pure_imginary} derives explicit asymptotic expansions of $I(t,v,\varphi)$ for any irreducible representation using an iteration scheme.  
Section~\ref{sec:global_asymptotics} combines the preceding results to derive the full asymptotic expansion for an arbitrary vector $v$ arising from a general unitary representation. As a consequence, one obtains a uniform pointwise asymptotic expansion for a corresponding class of functions $f$ on $X$.

\section{Expression of the Casimir Operator for Semisimple Connected Lie Group}\label{section:casimiroperator_generalliegroup}
This section is devoted to deriving the explicit expression in Iwasawa coordinates of the Casimir operator ${\Omega_
{{G}}}$ for a semisimple connected Lie group $G$. The explicit expression of the Casimir operator ${\Omega_{G}}$ in Iwasawa coordinates on $G$ does not appear in standard references. Classical formulas describe only the radial part of $\Omega_{G}$ on $G/K$ under $K$–bi-invariance. In contrast, our computation retains all terms of the Casimir operator ${\Omega_{G}}$ on $C^{\infty}(G)$. This full expression is essential for the analysis of sector average, since it involves nontrivial $K$-types and therefore is not captured by the radial part alone.

Consider a connected semisimple Lie group $G$ with finite center and a fixed maximal compact subgroup $K \subset G$ with Lie algebra $\mathfrak{k}$. Denote by $\mathfrak{g}$ the Lie algebra of $G$, and write $\theta$ for the Cartan involution associated with $K$. This yields the Cartan decomposition $\mathfrak{g} = \mathfrak{k} \oplus \mathfrak{p}.$
Choose a Cartan subgroup $A$ compatible with $K$. The Killing form on $\mathfrak{g}$ is written as
\begin{equation*}
B(X,Y)=\mathrm{Tr}(\mathrm{ad}_{X}\,\mathrm{ad}_{Y}).
\end{equation*}
 The Killing form $B$ is negative definite on $\mathfrak{k}$ and positive-definite on $\mathfrak{p}$. Furthermore, $\mathfrak{k}$ and $\mathfrak{p}$ are orthogonal. Then \begin{equation*}
     <X,Y> = - B(X,\theta(Y))
 \end{equation*}
defines a positive-definite inner product on $\mathfrak{g}$. Fix an orthonormal basis $\{k_{i}\}$ of $\mathfrak{k}$ and an orthonromal basis $\{p_{j}\}$. Then $\{k_{i}\}\cup \{p_{j}\}$ is an orthonormal basis of $\mathfrak{g}$, and $\{-k_{i}\}\cup \{p_{j}\}$ is the dual basis with respect to the Killing form.

Let $U(\mathfrak g)$ denote the universal enveloping algebra of
$\mathfrak g$.
Then the Casimir element
$\Omega_G\in Z(U(\mathfrak g))$
is given by
\begin{equation*}
\Omega_G
=
\sum_i -k_i^2
+
\sum_j p_j^2.
\end{equation*}
It is known that the definition of $\Omega_{G}$ does not depend on a choice of the orthonormal basis of $\mathfrak{g}$. 

\subsection{Preliminaries about the Casimir Operator}\label{Section:Casimir_operator}

\begin{defi}\label{definition:right_left_differential_operator}
For a function $f\in C^{\infty}(G)$, define the right and left translation as 
\begin{equation*}
    R_{g}f(h)=f(hg),\quad L_{g}f(h)=f(gh),\ h,g \in G.
\end{equation*}
For $y \in \mathfrak{g}$, the derived right action is
\begin{equation*}
  \mathfrak{R}_{y}f(g) = \left.\frac{\mathrm{d}}{\mathrm{d}t}\,f\bigl(g\exp(ty)\bigr)\right|_{t=0}.
\end{equation*}
Similarly, one can define the left-invariant operator
\begin{equation*}
  \mathfrak{L}_{y}f(g) = \left.\frac{\mathrm{d}}{\mathrm{d}t}\,f\bigl(\exp(ty)g\bigr)\right|_{t=0}.
\end{equation*}
And this action extends to $U(\mathfrak{g})$.
Since the Casimir operator $\Omega_{G}$ is in the center of $U(\mathfrak{g})$, $\mathfrak{R}_{\Omega_{G}}=\mathfrak{L}_{\Omega_{G}}.$
\end{defi}

Given a semisimple Lie algebra $\mathfrak{g}$ with Cartan decomposition $\mathfrak{g} = \mathfrak{k} \oplus \mathfrak{p}$, and a fixed maximal abelian subspace $\mathfrak{a} \subset \mathfrak{p}$, then $\mathrm{ad}\mid_{\mathfrak{a}}$ is self-adjoint and diagonalizable. We define a root as a non-zero linear functional $\alpha \in \mathfrak{a}^*$ (the dual space of $\mathfrak{a}$). The set $\mathfrak{g}_{\alpha}$ is called the root space corresponding to the root $\alpha$, and it is defined as follows: \begin{equation*}
    \mathfrak{g}_{\alpha} = \{X \in \mathfrak{g} \mid \text{ad}_Y(X) = [Y, X] = \alpha(Y)X \text{ for all } Y \in \mathfrak{a}\}.  
\end{equation*}
Then, $\mathfrak{g}$ is decomposed into a direct sum of root spaces \begin{equation*}
    \mathfrak{g} = \mathfrak{g}_0 \oplus \bigoplus_{\alpha \in \Sigma} \mathfrak{g}_\alpha, 
\end{equation*}
where $\Sigma$ is the restricted root system of $(\mathfrak{g}, \mathfrak{a})$.
Note that different root spaces are orthogonal with respect to Killing form.
Fix an order on $\mathfrak{a}^{*}$ and denote by $\Sigma^{+}$ by the corresponding set of positive roots. Let \begin{equation*}
    \mathfrak{n} = \sum\limits_{\alpha \in \Sigma^{+}} \mathfrak{g}_{\alpha}.
\end{equation*}
Then the Lie algebra $\mathfrak{g}$ can be decomposed as $\mathfrak{g}=\mathfrak{k}\oplus \mathfrak{a} \oplus \mathfrak{n}$. Also, the Lie group $G$ can be decomposed as $KAN$ where $A=\exp(\mathfrak{a})$ and $N=\exp(n)$. This is known as the Iwasawa decomposition.
Moreover, the product map is known to be a diffeomorphism. 

We have 
$\mathfrak{g}_{0} = \mathfrak{m} \oplus \mathfrak{a}$, 
where $\mathfrak{m}$ is the centralizer of $\mathfrak{a}$ in $\mathfrak{k}$. 
Fix root vectors $n_i \in \mathfrak{g}_{\alpha_i}$ corresponding to positive roots $\alpha_i$, normalized so that $\|n_i\| = 1$, so that they form an orthonormal basis of $\mathfrak{n}$. 
Then $\theta(n_i)$ are also orthonormal vectors, and satisfy 
$\theta(n_i) \in \mathfrak{g}_{-\alpha_i}$.

Further, let $\{m_{j}\}$ be an orthonormal basis of $\mathfrak{m}$ and $\{a_{l}\}$ be an orthonormal basis of $\mathfrak{a}$. Then we have a complete orthonormal basis of $\mathfrak{g}$: 
\begin{equation*}
    \{n_{i}\}\cup \{\theta(n_{i})\} \cup \{m_{j}\} \cup \{ a_{l}\}.
\end{equation*}
Let  
\begin{equation*}
    k_{i} = \frac{ n_{i} + \theta(n_{i})}{\sqrt{ 2 }} \in \mathfrak{k}, \ \ 
p_{i} =\frac{ n_{i}-\theta(n_{i})}{\sqrt{ 2 }} \in \mathfrak{p}.
\end{equation*}
And  $\theta(n_{s})=\frac{k_{s}-p_{s}}{\sqrt{ 2 }}$, $n_{s}=\frac{k_s + p_{s}}{\sqrt{2}}.$
This gives us 
\begin{equation*}
    p_{s} = -k_{s}+ \sqrt{ 2 } n_{s}.
\end{equation*}
Then $\{k_{i}\}\cup \{p_{i}\}\cup \{m_{j}\}\cup \{a_{l}\}$ forms an orthonormal basis of $\mathfrak{g}$. We decompose the Casimir operator according to the orthogonal splitting of $\mathfrak{g}$ into $\mathfrak{k}$ and its orthogonal complement. More precisely, we define
\begin{equation*}
\Omega_{K} := - \sum_{i} k_{i}^{2} \;-\; \sum_{j} m_{j}^{2},
\qquad
\Omega_{\mathfrak{p}} := \sum_{l} a_{l}^{2} \;+\; \sum_{i} p_{i}^{2}.
\end{equation*}
Here $\Omega_{K}$ is the Casimir operator of $K$, while $\Omega_{\mathfrak{p}}$ denotes the complementary part corresponding to the non-compact directions.

With this notation, the Casimir operator of $G$ can be written as
\begin{equation*}
\Omega_{G}
=
\Omega_{K} + \Omega_{\mathfrak{p}}.
\end{equation*}
From now on, we denote 
\begin{equation*}
    x_{i}= \begin{cases} a_{i}, & 1 \leq i \leq r = \text{dim}(\mathfrak{a}), \\ p_{i-r}, & r < i \leq \text{dim}(\mathfrak{p})=d. \end{cases}
\end{equation*}
Then $\{ x_{i}\}$ is an orthonormal basis of $\mathfrak{p}$.

\subsection{Explicit Expression of Casimir Operator in Iwasawa Coordinates}
Now the goal is to find a formula for $\mathfrak{L}_{\Omega_{G}}$ in terms of the Iwasawa coordinates. 
For $g \in G$, we write $g=k_{g}a_{g}n_{g}$ for its Iwasawa decomposition. The subscript is omitted if it is clear in the context. 
\begin{defi}\label{definition:differential_operator_Iwasawa}
    For a smooth function $f \in C^{\infty}(G)$ and $g =kan \in G$. We introduce the notation of differential operators as follows: 
\begin{align*}
\text{For } x \in \mathfrak{k},\quad 
\mathfrak{K}_{x}f(g) &= \left.\frac{d}{dt}\, f(k\exp(tx)an)\right|_{t=0}, \\
\text{For } x \in \mathfrak{a},\quad 
\mathfrak{A}_{x}f(g) &= \left.\frac{d}{dt}\, f(k\exp(tx)an)\right|_{t=0}, \\
\text{For } x \in \mathfrak{n},\quad 
\mathfrak{N}_{x}f(g) &= \left.\frac{d}{dt}\, f(ka\exp(tx)n)\right|_{t=0}.
\end{align*}
\end{defi}

\begin{lem}\label{lemma:commuting_differential_Iwasawa_coordinate_operator}
Let $x \in \mathfrak{k}$, $y \in \mathfrak{a}$, and $z \in \mathfrak{n}$.
Then the corresponding Iwasawa coordinates differential operators commute pairwise, namely
\begin{equation*}
[\mathfrak{K}_{x} , \mathfrak{A}_{y}] = 0, \qquad
[\mathfrak{K}_{x} , \mathfrak{N}_{z}] = 0, \qquad
[\mathfrak{A}_{y} , \mathfrak{N}_{z}] = 0.
\end{equation*}
\end{lem}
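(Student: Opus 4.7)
The plan is to exploit the Iwasawa diffeomorphism $\Phi: K \times A \times N \to G$, $\Phi(k,a,n) = kan$, to pull back smooth functions on $G$ to smooth functions on the product manifold $K \times A \times N$. Under this identification, I expect each of the three operators $\mathfrak{K}_{x}$, $\mathfrak{A}_{y}$, $\mathfrak{N}_{z}$ to correspond to a vector field on the product that differentiates in only one factor. Since differential operators depending on disjoint sets of coordinates always commute, the claim then follows immediately.

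The core step is therefore to verify that each operator preserves two of the three Iwasawa components. For $\mathfrak{K}_{x}$ with $x \in \mathfrak{k}$: since $\exp(tx) \in K$, the curve $t \mapsto k\exp(tx)\,a\,n$ has Iwasawa decomposition $(k\exp(tx), a, n)$, so $\mathfrak{K}_{x}$ pulls back to a vector field acting only in the $K$ factor. For $\mathfrak{A}_{y}$ with $y \in \mathfrak{a}$: since $A$ is abelian, $k\exp(ty)\,a\,n = k\,(a\exp(ty))\,n$, giving Iwasawa components $(k, a\exp(ty), n)$, so only the $A$ factor is differentiated. For $\mathfrak{N}_{z}$ with $z \in \mathfrak{n}$: since $N$ is a subgroup, $\exp(tz)\,n \in N$, and thus $k\,a\,\exp(tz)\,n$ has Iwasawa components $(k, a, \exp(tz)n)$, so only the $N$ factor is differentiated. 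Pulling back via $\Phi$ makes each of the three operators a vector field on $K \times A \times N$ involving a single factor, and pairwise commutation is automatic; pushing forward by $\Phi$ then yields the three identities.

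There is essentially no analytic obstacle here; the content is purely structural, relying only on $\exp(\mathfrak{k}) \subset K$, the abelianness of $A$, and the fact that $N$ is a subgroup. The only subtlety worth flagging in the write-up is that this commutativity is a feature of the Iwasawa \emph{coordinate} operators and would fail for the genuine left- or right-invariant vector fields $\mathfrak{L}_{x}, \mathfrak{L}_{y}, \mathfrak{L}_{z}$, whose commutators would involve $[x,y], [x,z], [y,z]$ and the nontrivial bracket relations $[\mathfrak{a}, \mathfrak{n}] \subset \mathfrak{n}$, $[\mathfrak{k}, \mathfrak{a}] \subset \mathfrak{p}$. It is precisely the splitting of the group into \emph{independent} Iwasawa coordinates that produces the clean commutation stated in the lemma.
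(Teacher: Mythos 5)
Your proof is correct and captures the same structural idea as the paper's: the Iwasawa coordinate operators each act on exactly one factor of the product $K\times A\times N$ (using that $\exp(\mathfrak k)\subset K$, $A$ is abelian, and $N$ is a subgroup), so the commutation is that of vector fields living in disjoint factors. The paper reaches the same conclusion by writing out the mixed partial derivatives and invoking the same coordinate separation; your reformulation via the pullback along the Iwasawa diffeomorphism is a cleaner way to organize the identical argument and makes explicit where the abelianness of $A$ enters.
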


\begin{proof}
    We verify that the operators $\mathfrak{K}_{x}$ and $\mathfrak{A}_{y}$ commute for all $x \in \mathfrak{k}$ and $y \in \mathfrak{a}$. The remaining commutation relations follow by analogous arguments. Starting from the definitions, for $g=kan$, 
\begin{align*}
\mathfrak{K}_{x}\mathfrak{A}_{y}(f(g)) 
&= \mathfrak{K}_{x}\left( \left. \frac{\mathrm{d}}{\mathrm{d}t} f\big(k\exp(ty)\, a n\big) \right|_{t=0} \right) \\
&= \left. \frac{\partial}{\partial s} \left( \frac{\partial}{\partial t} f\big(k\exp(sx)\exp(ty)\, a n\big) \right) \right|_{t=s=0} \\
&= \left. \frac{\partial^2}{\partial s\partial t}\, f\big(k\exp(sx)\exp(ty)\, a n\big) \right|_{t=s=0}.
\end{align*}

By the definition of the differential operators, the group elements $\exp(sx)$ and $\exp(ty)$ act on the $K$- and $A$-coordinates respectively in the Iwasawa decomposition. The order of differentiation can be interchanged:
\begin{equation*}
\mathfrak{K}_{x}\mathfrak{A}_{y}(f(g)) = \mathfrak{A}_{y}\mathfrak{K}_{x}(f(g)).
\end{equation*}
\end{proof}

\begin{lem} \label{orthogonalrelationship}
For $g=kan \in G$, define 
$\omega_{ij}(g)=\langle \mathrm{Ad}(k)x_{i},x_{j}\rangle .$
Then the matrix $(\omega_{ij}(g))$ is orthogonal, and it satisfies 
$\omega_{ij}(k^{-1})=\omega_{ji}(k).$
\end{lem}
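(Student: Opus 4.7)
The plan is to observe that the matrix $(\omega_{ij}(g))$ is exactly the matrix of the linear map $\mathrm{Ad}(k):\mathfrak{g}\to\mathfrak{g}$ expressed in the orthonormal basis $\{X_i\}$, and then use the fact that for $k\in K$ this map is an orthogonal transformation of $\mathfrak{g}$ with respect to the inner product $\langle\cdot,\cdot\rangle=-B(\cdot,\theta\cdot)$.

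First, I would verify the $K$--invariance of $\langle\cdot,\cdot\rangle$. Since the Killing form is $\mathrm{Ad}$--invariant and the Cartan involution $\theta$ commutes with $\mathrm{Ad}(k)$ for every $k\in K$ (because $\theta$ is the identity on $\mathfrak{k}$ and $K$ preserves both $\mathfrak{k}$ and $\mathfrak{p}$), one computes
\begin{equation*}
\langle \mathrm{Ad}(k)X,\mathrm{Ad}(k)Y\rangle
= -B\bigl(\mathrm{Ad}(k)X,\theta\,\mathrm{Ad}(k)Y\bigr)
= -B\bigl(\mathrm{Ad}(k)X,\mathrm{Ad}(k)\theta Y\bigr)
= -B(X,\theta Y)=\langle X,Y\rangle.
\end{equation*}
Hence $\mathrm{Ad}(k)$ is orthogonal on $(\mathfrak{g},\langle\cdot,\cdot\rangle)$.

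Next, since $\{X_i\}$ is orthonormal, the entries $\omega_{ij}(g)=\langle\mathrm{Ad}(k)X_i,X_j\rangle$ are precisely the matrix coefficients of $\mathrm{Ad}(k)$ in this basis. Orthogonality of the operator $\mathrm{Ad}(k)$ therefore translates directly into the orthogonality relation $\sum_i\omega_{ij}(g)\omega_{ik}(g)=\delta_{jk}$ for the matrix $(\omega_{ij}(g))$, which is the first claim. (Notice also that the definition depends only on the $K$--component $k$, so $\omega_{ij}$ descends to a function on $K$.)

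For the symmetry relation, I would use that for an orthogonal operator the inverse equals the transpose with respect to an orthonormal basis. Since $\mathrm{Ad}(k^{-1})=\mathrm{Ad}(k)^{-1}$ and this inverse is the adjoint with respect to $\langle\cdot,\cdot\rangle$,
\begin{equation*}
\omega_{ij}(k^{-1})
=\langle \mathrm{Ad}(k^{-1})X_i,X_j\rangle
=\langle X_i,\mathrm{Ad}(k)X_j\rangle
=\langle \mathrm{Ad}(k)X_j,X_i\rangle
=\omega_{ji}(k),
\end{equation*}
which is the desired identity. There is no substantive obstacle here; the only point to double-check is the compatibility of $\theta$ with $\mathrm{Ad}(k)$ used above, and the fact that the orthonormal basis chosen in Section~\ref{Section:Casimir_operator} remains orthonormal with respect to $\langle\cdot,\cdot\rangle$, which is automatic from its construction.
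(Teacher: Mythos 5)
Your proof is correct and is exactly the argument the paper compresses into its one-sentence justification (``$\mathrm{Ad}(k)$ preserves the chosen inner product and therefore acts by an orthogonal transformation on the orthonormal basis''); you have simply unpacked that sentence into the $K$-invariance check, the matrix-coefficient identification, and the ``orthogonal inverse $=$ transpose'' step. No gaps.
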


This follows because $\mathrm{Ad}(k)$ preserves the chosen inner product and therefore acts by an orthogonal transformation on the orthonormal basis $\{x_i\}$.

First, we want to compute $\mathfrak{L}_{x_{i}}$. Let $g=kan \in G$, then 
\begin{align*}
\mathfrak{L}_{x_{i}}f(g) &= \left.\frac{d}{dt}[f(\exp(tx_i)kan)]\right|_{t=0}=\left.\frac{d}{dt}[f(k\exp(t \mathrm{Ad}(k^{-1})x_i)an)]\right|_{t=0}.
\end{align*}
We have explicit expansion of $\mathrm{Ad}(k^{-1})x_{i}$, 
\begin{equation*}
    \mathrm{Ad}(k^{-1})x_{i} = \sum\limits_{j} \omega_{ij}(k^{-1})x_{j}
= \sum\limits_{j} \omega_{ji}(k) x_{j}.
\end{equation*}
Plugging into the formula of $\mathfrak{L}_{x_{i}}f(g)$, it becomes 
\begin{align*}
\mathfrak{L}_{x_{i}}f(g)&=\mathfrak{R}_{\mathrm{Ad}(k^{-1})x_{i}} R_{an}f(k) =\sum\limits_{j}\omega_{ji}(k)\mathfrak{R}_{x_{j}} R_{an}f(k)\\
& = \sum\limits_{j=1}^{r} \omega_{ji}(k) \mathfrak{A}_{a_{j}}f(g) + \sum\limits_{j=r+1}^{d} \omega_{ji}(k)\mathfrak{R}_{p_{j-r}} R_{an }f(k).
\end{align*}
Hence the terms in the second summation $\mathfrak{R}_{p_{j-r}} R_{an}f(k)$ becomes $$
-\mathfrak{K}_{k_{j-r}}f(g) + \sqrt{ 2 } \mathfrak{R}_{n_{j-r}} R_{an} f(k),
$$
where $\mathfrak{R}_{n_{j-r}} R_{an}f(k)$ can be expanded as follows,
\begin{align*}
\mathfrak{R}_{n_{j-r}}R_{an}f(k) 
= \left.\frac{d}{dt}\,f\!\big(k \exp(tn_{j-r})an\big)\right|_{t=0}
   = \frac{d}{dt}\,f\!\big(ka\,\exp(\mathrm{Ad}(a^{-1})tn_{j-r})\,n\big) 
= e^{-\alpha_{j-r}(\log a)}\,\mathfrak{N}_{n_{j-r}}f(g).
\end{align*}

With the Iwasawa decomposition $g=kan$, we conclude that  
\begin{equation*}
    \mathfrak{L}_{x_{i}} f(g) = \sum\limits_{j =1}^{r} \omega_{ji}(k) \mathfrak{A}_{a_{j}}f(g) + \sum\limits_{j=r+1} ^{d} \omega_{ji}(k) [-\mathfrak{K}_{k_{j-r}}f(g)+\sqrt{ 2 }e^{-\alpha_{j-r}(\log a)}\mathfrak{N}_{n_{j-r}}f(g)].
\end{equation*}
For simplicity, we write $$
\mathfrak{L}_{x_{i}} f(g) = - \Delta^{K}_{i} f(g) + \Delta^{A}_{i} f(g) +\sqrt{ 2 } \Delta^{N}_{i} f(g),
$$
where 
\begin{align*}
\Delta^{K}_{i} f(g) &= \sum\limits_{j=r+1}^{d} \omega_{ji}(k)\mathfrak{K}_{k_{j-r}}f(g), \quad  \Delta^{A}_{i} f(g) = \sum\limits_{j=1}^{r} \omega_{ji}(k) \mathfrak{A}_{a_{j}} f(g), \\
 \Delta^{N}_{i} f(g) &= \sum\limits_{j=r+1}^{d} \omega_{ji}(k) e^{-\alpha_{j-r}(\log a)} \mathfrak{N}_{n_{j-r}}f(g).
\end{align*}
We have 
\begin{equation*}
    \mathfrak{L}_{x^{2}_{i}} =\mathfrak{L}^{2}_{x_{i}} = (-\Delta^{K}_{i}+\Delta^{A}_{i}+\sqrt{ 2}\Delta^{N}_{i})^{2}.
\end{equation*}
Therefore, the next step is to compute all the terms of $\mathfrak{L}_{x_{i}}^{2}$. 

\subsubsection{Computation of $\sum\limits_{i}\Delta^{K}_{i}\Delta^{A}_{i}$.}\label{subsubsec:computation_of_KA}
Let us start on $\sum\limits_{i}\Delta^{K}_{i}\Delta^{A}_{i}$.
For $y \in \mathfrak{k}$ and $j= 1,\dots,r$, as above by Iwasawa decomposition, $g =kan$. 
\begin{align*}
\mathfrak{K}_{y} [\omega_{ji}(k)\mathfrak{A}_{a_{j}}f(kan)]
=&\left.\frac{d}{dt}[\omega_{ji}(k \exp(ty)) \mathfrak{A}_{a_{j}}f(k\exp(ty)an)]\right|_{t=0}\\
=& \left.\frac{d}{dt}\omega_{ji}(k \exp(ty))\right|_{t=0}  \mathfrak{A}_{a_{j}}f(kan)+ \omega_{ji}(k) \mathfrak{K}_{y} \mathfrak{A}_{a_{j}}f(kan).
\end{align*}
When $j \in \{1,\dots,r\}$, $x_{j}=a_{j}$. The first derivative that we got can be further calculated as 
\begin{align*}
\left.\frac{d}{dt}\,\omega_{ji}\big(k e^{ty}\big)\right|_{t=0}
= \left.\frac{d}{dt}\,\langle \mathrm{Ad}(k e^{ty})x_{j},x_{i}\rangle \right|_{t=0}
   = \left.\frac{d}{dt}\,\langle e^{t\mathrm{ad}_{y}} a_{j},\,\mathrm{Ad}(k^{-1})x_{i}\rangle \right|_{t=0} 
= \langle [y,a_{j}],\,\mathrm{Ad}(k^{-1})x_{i}\rangle .
\end{align*}
Then by expressing $[y,a_j]$ in terms of the basis $\{x_l\}$: $$  
    [y,a_j] = \sum_{l=1}^{d} \langle [y,a_j], x_l \rangle x_l,  
    $$
the formulae becomes
\begin{align*}
\left.\frac{d}{dt}[\omega_{ji}(k \exp(ty))]\right|_{t=0} &= \sum\limits_{l=1}^{d} \omega_{il}(k^{-1})\langle [y,a_{j}],x_{l} \rangle =\sum\limits_{l=1}^{d} \omega_{li}(k) \langle  [y,a_{j}],x_{l} \rangle .
\end{align*}

Hence we conclude that 
\begin{align*}
&\mathfrak{K}_{y} [\omega_{ji}(k)\mathfrak{A}_{a_{j}}f(kan)]
= \sum\limits_{l=1}^{d} \omega_{li}(k) \langle  [y,a_{j}],x_{l} \rangle \mathfrak{A}_{a_{j}}f(kan)+ \omega_{ji}(k) \mathfrak{K}_{y}\mathfrak{A}_{a_{j}}f(kan).
\end{align*}
From this, we obtain 
\begin{align*}
\Delta^{K}_{i}\Delta^{A}_{i} f(g) &= \sum\limits_{s=r+1}^{d} \sum\limits_{j=1}^{r} \omega_{si}(k)\mathfrak{K}_{k_{s-r}}[\omega_{ji}(k)\mathfrak{A}_{a_{j}}f(g)]\\
&= \sum\limits_{s=r+1}^{d} \sum\limits_{j=1}^{r}\sum\limits_{l=1}^{d} \omega_{si}(k)\omega_{li}(k) \langle  [k_{s-r},a_{j}],x_{l} \rangle \mathfrak{A}_{a_{j}}f(kan) + \sum\limits_{s=r+1}^{d} \sum\limits_{j=1}^{r} \omega_{si}(k)\omega_{ji}(k) \mathfrak{K}_{k_{s-r}}\mathfrak{A}_{a_{j}}f(kan).
\end{align*}
Since $\{\omega_{ij}\}$ is orthogonal as defined in Lemma~\ref{orthogonalrelationship}, 
$$
\sum\limits_{i} \omega_{si}(k)\omega_{li}(k)=\delta_{sl}.
$$
Therefore, the second term in the above expression for $\Delta^{K}_{i}\Delta^{A}_{i}$ vanishes after summation over $i$. 
We obtain
\begin{equation*}
    \sum\limits_{i} \Delta^{K}_{i}\Delta^{A}_{i}f(g) = \sum\limits_{s=r+1}^{d} \sum\limits_{j=1}^{r} \langle  [k_{s-r},a_{j}],x_{s} \rangle \mathfrak{A}_{a_{j}}f(kan) = \sum\limits_{s=r+1}^{d} \sum\limits_{j=1}^{r} \langle  [k_{s-r},a_{j}],p_{s-r} \rangle \mathfrak{A}_{a_{j}}f(kan). 
\end{equation*}

Using the invariance of Killing form, 
\begin{equation*}
    \langle [k_{s-r},a_{j}],p_{s-r} \rangle = -B([k_{s-r},a_{j}],p_{s-r})= B(a_{j},[k_{s-r},p_{s-r}]),
\end{equation*}
where 
\begin{equation*}
    [k_{s-r},p_{s-r}] = \left[ \frac{n_{s-r}+\theta(n_{s-r})}{\sqrt{ 2 }}, \frac{n_{s-r}-\theta(n_{s-r})}{\sqrt{ 2 }} \right]= [\theta(n_{s-r}),n_{s-r}].
\end{equation*}
Therefore, the Killing form becomes 
\begin{align*}
\langle [k_{s-r},a_{j}],p_{s-r} \rangle 
&= B(a_{j},[\theta(n_{s-r}),n_{s-r}])
= -B(a_{j},[n_{s-r},\theta(n_{s-r})]) \\
&= B([n_{s-r},a_{j}],\theta(n_{s-r}))
= -\alpha_{s-r}(a_{j}) \cdot \langle n_{s-r},n_{s-r} \rangle \\
&= -\alpha_{s-r}(a_{j}).
\end{align*}

Denoting $2\rho=\sum\limits_{s=r+1}^{d} \alpha_{s-r}$, finally, we obtain\begin{equation*}
    \sum\limits_{i} \Delta^{K}_{i}\Delta^{A}_{i}f(g) = -\sum\limits_{j=1}^{r} 2\rho(a_{j}) \mathfrak{A}_{a_{j}}f(g).
\end{equation*}
\subsubsection{Computation of $\sum\limits_{i}\Delta^{A}_{i}\Delta^{K}_{i}$.}
Next we compute $\sum_{i}\Delta^{A}_{i}\Delta^{K}_{i}$. For a fixed index $i$ one has
\begin{align*}
\Delta^{A}_{i}\Delta^{K}_{i} f(g)
&= \sum_{s=r+1}^{d}\sum_{j=1}^{r} \omega_{ji}(k)\, \mathfrak{A}_{a_{j}}\!\left( \omega_{si}(k)\, \mathfrak{K}_{k_{s-r}} f (g)\right) \\
&= \sum_{s=r+1}^{d}\sum_{j=1}^{r} \Big( \omega_{ji}(k)\,(\mathfrak{A}_{a_{j}}\omega_{si})(k)\, \mathfrak{K}_{k_{s-r}} f(g)
+ \omega_{ji}(k)\,\omega_{si}(k)\, \mathfrak{A}_{a_{j}}\mathfrak{K}_{k_{s-r}} f(g) \Big).
\end{align*}
The first term vanishes due to the fact that $(\mathfrak{A}_{a_{j}}\omega_{si})(k)=0$ and the second term vanishes too after summing over $i$ because of Lemma~\ref{orthogonalrelationship}.

\subsubsection{Computation of $\sum\limits_{i}\Delta^{K}_{i}\Delta^{N}_{i}$.}
Furthermore, the computation of $\sum\limits_{i}\Delta^{K}_{i}\Delta^{N}_{i}$ is similar to what we did for $\sum\limits_{i}\Delta^{K}_{i}\Delta^{A}_{i}$.
We get 
\begin{equation*}
   \begin{aligned}
\Delta^{K}_{i}\Delta^{N}_{i}f(g)
=& \sum\limits_{s=r+1}^{d}\sum\limits_{j=r+1}^{d} \sum\limits_{l=1}^{d}\omega_{si}(k)\omega_{li}(k)e^{-\alpha_{j-r}(\log a)}\langle [k_{s-r},p_{j-r}],x_{l} \rangle \mathfrak{N}_{n_{j-r}}f(g)\\
+& \sum\limits_{s=r+1}^{d}\sum\limits_{j=r+1}^{d}\omega_{si}(k)\omega_{ji}(k)e^{-\alpha_{j-r}(\log a)}\mathfrak{K}_{k_{s-r}}\mathfrak{N}_{n_{j-r}}f(g).
\end{aligned} 
\end{equation*}
So that the summation over $i$  becomes
\begin{align*}
\sum\limits_{i}\Delta^{K}_{i}\Delta^{N}_{i}f(g)
=&\sum\limits_{s=r+1}^{d}\sum\limits_{j=r+1}^{d}e^{-\alpha_{j-r}(\log a)}\langle [k_{s-r},p_{j-r}],p_{s-r} \rangle \mathfrak{N}_{n_{j-r}}f(g)\\
+& \sum\limits_{s=r+1}^{d} e^{-\alpha_{s-r}(\log a)} \mathfrak{K}_{k_{s-r}}\mathfrak{N}_{n_{s-r}}f(g),
\end{align*}
where the inner product is dealt same as above: 
\begin{equation*}
    \langle [k_{s-r},p_{j-r}],p_{s-r} \rangle = B([k_{s-r},p_{j-r}],p_{s-r}) 
 = -B(p_{j-r}, [k_{s-r},p_{s-r}]).
\end{equation*}
And we have
\begin{equation}\label{equation:computation_of_Lie_bracket}
    \begin{aligned}
        [k_{s-r},p_{s-r}]
&=
\frac{1}{2}
\big[
n_{s-r}+\theta(n_{s-r}),
\,n_{s-r}-\theta(n_{s-r})
\big] \\
&=
\frac{1}{2}
\Big(
[n_{s-r},n_{s-r}]
-
[n_{s-r},\theta(n_{s-r})]
+
[\theta(n_{s-r}),n_{s-r}]
-
[\theta(n_{s-r}),\theta(n_{s-r})]
\Big).
    \end{aligned}
\end{equation}
Since $[X,X]=0$ and $[n_{s-r},\theta(n_{s-r})]=-[\theta(n_{s-r}),n_{s-r}]$, the above expression reduces to
\begin{equation*}
[k_{s-r},p_{s-r}]
=
[\theta(n_{s-r}),n_{s-r}].
\end{equation*}

Note that this element lies in $\mathfrak{g}_{0} \cap \mathfrak{p} = \mathfrak{a}$. In particular, this element is orthogonal to $p_{j-r}$. Hence, the inner product $\langle [k_{s-r},p_{j-r}],p_{s-r} \rangle=0$, and $$
\sum\limits_{i}\Delta^{K}_{i}\Delta^{N}_{i}f(g) = \sum\limits_{s=r+1}^{d} e^{-\alpha_{s-r}(\log a)} \mathfrak{K}_{k_{s-r}}\mathfrak{N}_{n_{s-r}} f(g).
$$

\subsubsection{Computation of $\sum\limits_{i}\Delta^{N}_{i}\Delta^{K}_{i}$.}
\begin{align*}
&\Delta^{N}_{i}\Delta^{K}_{i} f(g)
=\sum\limits_{s=r+1}^{d}\sum\limits_{j=r+1}^{d}\omega_{si}(k)\omega_{ji}(k)e^{-\alpha_{j-r}(\log a)} \mathfrak{N}_{n_{j-r}}\mathfrak{K}_{k_{s-r}}f(g).
\end{align*}
Again, by the orthogonal relationship in Lemma~\ref{orthogonalrelationship}, we obtain \begin{equation*}
    \sum\limits_{i=1}^{d} \Delta^{N}_{i}\Delta^{K}_{i} = \sum\limits_{s=r+1}^{d} e^{-\alpha_{s-r}(\log a)} \mathfrak{N}_{n_{s-r}} \mathfrak{K}_{k_{s-r}} f(g).
\end{equation*}
\subsubsection{Computation of $\sum\limits_{i}\Delta^{N}_{i}\Delta^{A}_{i}$.}
Similarly, due to Lemma~\ref{orthogonalrelationship}, Since $s\in\{r+1,\dots,d\}$ and $j\in\{1,\dots,r\}$, one has $s\neq j$, and therefore the entire sum vanishes,
\begin{equation*}
    \sum\limits_{i}^{d}\Delta^{N}_{i}\Delta^{A}_{i}
= \sum\limits_{i=1}^{d} \sum\limits_{s=r+1}^{d}\sum\limits_{j=1}^{r}\omega_{si}(k)\omega_{ji}(k) e^{-\alpha_{s-r}(\log a)}\mathfrak{N}_{n_{s-r}}\mathfrak{A}_{a_{j}}f(g) 
=0.
\end{equation*}

\subsubsection{Computation of $\sum\limits_{i}\Delta^{A}_{i}\Delta^{N}_{i}$.}
For any $y \in \mathfrak{a}$, we compute the composition of two differential operators $\mathfrak{A}_{y}$ and $\mathfrak{N}_{n_{s}}$, 
\begin{equation*}
\mathfrak{A}_{y}\!\bigl[e^{-\alpha_{s}(\log a)}\,\mathfrak{N}_{n_{s}}f(g)\bigr]
=
\bigl(\mathfrak{A}_{y} e^{-\alpha_{s}(\log a)}\bigr)\,\mathfrak{N}_{n_{s}}f(g)
+
e^{-\alpha_{s}(\log a)}\,\mathfrak{A}_{y}\mathfrak{N}_{n_{s}}f(g).
\end{equation*}
The first term can be simplified immediately, 
\begin{equation*}
\mathfrak{A}_{y}\bigl(e^{-\alpha_{s}(\log a)}\bigr)
=
-\alpha_{s}(y)\,e^{-\alpha_{s}(\log a)}.
\end{equation*}
Hence, 
\begin{align*}
\Delta^{A}_{i}\Delta^{N}_{i} f(g)
=& \sum_{j=1}^{r}\omega_{ji}(k)\,\mathfrak{A}_{a_{j}}
\left(\sum_{s=r+1}^{d}\omega_{si}(k)\,e^{-\alpha_{s-r}(\log a)}\,\mathfrak{N}_{n_{s-r}} f(g)\right)\\
=& -\sum_{s=r+1}^{d}\sum_{j=1}^{r}\omega_{si}(k)\omega_{ji}(k)\,\alpha_{s-r}(a_j)\,e^{-\alpha_{s-r}(\log a)}\,
\mathfrak{N}_{n_{s-r}} f(g) \\
&+\sum_{s=r+1}^{d}\sum_{j=1}^{r}\omega_{si}(k)\omega_{ji}(k)\,e^{-\alpha_{s-r}(\log a)}\,
\mathfrak{A}_{a_{j}}\mathfrak{N}_{n_{s-r}} f(g).
\end{align*}
So $\sum\limits_{i=1}^{d} \Delta^{A}_{i}\Delta^{N}_{i}f(g)$ vanishes because of Lemma~\ref{orthogonalrelationship}.

\subsubsection{Computation of  $\sum\limits_{i}(\Delta^{K}_{i})^{2}$.} By the similar process as Section~\ref{subsubsec:computation_of_KA}, we obtain 
\begin{align*}
(\Delta^{K}_{i})^{2}f(g) = \Delta^{K}_{i}\Delta^{K}_{i} f(g) 
=& \sum\limits_{s=r+1}^{d}\sum\limits_{j=r+1}^{d} \omega_{si}(k)\mathfrak{K}_{k_{s-r}}[\omega_{ji}(k)\mathfrak{K}_{k_{j-r}}f(g)]\\
=& \sum\limits_{s=r+1}^{d}\sum\limits_{j=r+1}^{d}\sum\limits_{l=1}^{d} \omega_{si}(k) \omega_{li}(k) \langle [k_{s-r},p_{j-r}],x_{l} \rangle \mathfrak{K}_{k_{j-r}} f(g) \\
+& \sum\limits_{s=r+1}^{d} \sum\limits_{j=r+1}^{d} \sum\limits_{l=1}^{d} \omega_{si}(k)\omega_{li}(k) \mathfrak{K}_{k_{s-r}} \mathfrak{K}_{k_{j-r}} f(g),
\end{align*}
so that 
\begin{equation*}
    \sum\limits_{i} (\Delta^{K}_{i})^{2} f(g)
= \sum\limits_{s=r+1}^{d}\sum\limits_{j=r+1}^{d} \langle [k_{s-r},p_{j-r}],p_{s-r} \rangle \mathfrak{K}_{k_{j-r}} f(g)
+ \sum\limits_{s=r+1}^{d}   \mathfrak{K}_{k_{s-r}}^{2}f(g).
\end{equation*}

The inner product $\langle [k_{s-r},p_{j-r}],p_{s-r} \rangle= -B(p_{j-r},[k_{s-r},p_{s-r}])$, where the Lie bracket $[k_{s-r},p_{s-r}]=[\theta(n_{s-r}),n_{s-r}]$ as computed in~\eqref{equation:computation_of_Lie_bracket}. Note that this element lies in $\mathfrak{g}_{0} \cap \mathfrak{p} = \mathfrak{a}$. In particular, this element is orthogonal to $p_{j-r}$. Hence, the inner product $\langle [k_{s-r},p_{j-r}],p_{s-r} \rangle=0$, and 
\begin{align*}
\sum_{i} (\Delta^{K}_{i})^{2}f(g)
&= \sum_{s=r+1}^{d}\mathfrak{K}_{k_{s-r}}^{2}f(g) = \sum_{s=r+1}^{d}\mathfrak{K}_{k_{s-r}}^{2}\bigl(R_{an}f\bigr)(k)
= \sum_{s=r+1}^{d}\mathfrak{R}_{k_{s-r}}^{2}\bigl(R_{an}f\bigr)(k) \\
&= \mathfrak{R}_{-\Omega_{K}}\bigl(R_{an}f\bigr)(k)+\mathfrak{R}_{\Omega_{M}}\bigl(R_{an}f\bigr)(k) = \mathfrak{L}_{-\Omega_{K}}f(g)+\mathfrak{K}_{\Omega_{M}}f(g).
\end{align*}
In the last step we use that $\Omega_{G}$ lies in the center of $U(\mathfrak g)$, which allows us to replace the right action on $R_{an}f$ by the left action on $f$ at $g=kan$.

\subsubsection{Computation of $\sum\limits_{i=1}^{d}(\Delta^{A}_{i})^{2}$}
\begin{align*}
(\Delta^{A}_{i})^{2}f(g)= \sum\limits_{s=1}^{r}\sum\limits_{j=1}^{r} \omega_{si}(k)\omega_{ji}(k) \mathfrak{A}_{a_{s}}\mathfrak{A}_{a_{j}} f(g).
\end{align*}
Thanks to Lemma~\ref{orthogonalrelationship}, we get $$
\sum\limits_{i=1}^{d}(\Delta^{A}_{i})^{2}f(g) = \sum\limits_{s=1}^{r} (\mathfrak{A}_{a_{s}})^{2}f(g).
$$
\subsubsection{Computation of $\sum\limits_{_{i}}(\Delta^{N}_{i})^{2}$.}
\begin{equation*}
    (\Delta^{N}_{i})^{2}f(g) = \sum\limits_{s=r+1}^{d} \sum\limits_{j=r+1}^{d} \omega_{si}(k)\omega_{ji}(k) e^{-\alpha_{s-r}(\log a)-\alpha_{j-r}(\log a)}\mathfrak{N}_{n_{s-r}}\mathfrak{N}_{n_{j-r}} f(g).
\end{equation*}
Once again, by Lemma~\ref{orthogonalrelationship}, the formula simplifies to 
\begin{align*}
\sum\limits_{i=1}^{d}(\Delta^{N}_{i})^{2}f(g) = \sum\limits_{s=r+1}^{d}  e^{-2\alpha_{s-r}(\log a)}\mathfrak{N}_{n_{s-r}}^{2} f(g).
\end{align*}

Now we are ready to assemble all those terms together. Recall that 
\begin{equation*}
    \Omega_{G}= \Omega_{K} + \sum\limits_{i=1}^{d} x_{i}^{2},\ \ 
\mathfrak{L}^{2}_{x_{i}} = (-\Delta^{K}_{i}+\Delta^{A}_{i}+\sqrt{ 2 }\Delta^{N}_{i})^{2}.
\end{equation*}
Hence we obtain the final theorem.
\begin{thm}[Derivative formula for the Casimir operator]\label{theorem:Casimiroperatorforsemisimpleconnected}
Let $G$ be a connected semisimple Lie group with Iwasawa decomposition $G=KAN$.  
Write $g=kan\in G$ and select orthonormal bases
\begin{equation*}
\{a_{1},\dots,a_{r}\}\subset\mathfrak a,\quad
\{n_{1},\dots,n_{m}\}\subset\mathfrak n,\quad
\{k_{1},\dots,k_{m}\}\subset\mathfrak k ,
\end{equation*}
so that $[a,n_{j}]=\alpha_{j}(a)\,n_{j}$ for every $a\in\mathfrak a$.  
Let $\rho=\tfrac12\sum\limits_{\alpha\in\Sigma^{+}}\alpha$.  
Then for every $f\in C^{\infty}(G)$,
\begin{align*}
\mathfrak{L}_{\Omega_{G}}f(g)
&=\sum_{i=1}^{r}\Bigl(\mathfrak{A}_{a_i}^{2}+2\rho(a_i)\mathfrak{A}_{a_i}\Bigr)f(g)\\
&\quad+\sum_{j=1}^{m}\Bigl[-2\sqrt{2}\,e^{-\alpha_{j}(\log a)}\,\mathfrak{K}_{k_j}\mathfrak{N}_{n_j}
      +2\,e^{-2\alpha_{j}(\log a)}\,\mathfrak{N}_{n_j}^{2}\Bigr]f(g)\\
&\quad+\mathfrak{K}_{\Omega_{M}}f(g),
\end{align*}
where ${\Omega_{M}}$ denotes the Casimir operator of the centralizer $M=Z_{K}(A)$.
\end{thm}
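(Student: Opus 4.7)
The plan is to compute $\mathcal{L}_{\Omega}$ by expanding $\Omega$ in an orthonormal basis compatible with the Iwasawa decomposition, converting left-invariant derivatives into the Iwasawa-coordinate derivatives $\mathfrak{A},\mathfrak{K},\mathfrak{N}$ via conjugation by $k$, and then exploiting orthogonality of $\mathrm{Ad}(k)$ together with Killing-form invariance to collapse the many cross terms down to the stated five surviving pieces.

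First I would fix the orthonormal basis of $\mathfrak{g}$ built from the root vectors $n_j$: the Cartan pairs $k_j=(n_j+\theta(n_j))/\sqrt{2}\in\mathfrak{k}$ and $p_j=(n_j-\theta(n_j))/\sqrt{2}\in\mathfrak{p}$, together with orthonormal bases $\{m_j\}$ of $\mathfrak{m}$ and $\{a_l\}$ of $\mathfrak{a}$. In this basis, $\Omega=\Omega_K+\sum_l a_l^{2}+\sum_j p_j^{2}$, and the $\{a_l\}\cup\{p_j\}$ merge into a single orthonormal basis $\{x_i\}$ of $\mathfrak{p}$, so it suffices to compute $\sum_i\mathcal{L}_{x_i}^{2}$ and add $\mathcal{L}_{\Omega_K}$.

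Next I would compute $\mathcal{L}_{x_i}$ in Iwasawa coordinates by writing $\mathcal{L}_{x_i}f(g)=\tfrac{d}{dt}\big|_{0}f(\exp(t\,\mathrm{Ad}(k^{-1})x_i)\,an)$ and expanding $\mathrm{Ad}(k^{-1})x_i=\sum_j\omega_{ji}(k)\,x_j$ with the orthogonal matrix of Lemma~\ref{orthogonalrelationship}. The $\mathfrak{a}$-components contribute $\mathfrak{A}_{a_j}$ directly; the $\mathfrak{p}$-components must be rewritten by $p_s=-k_s+\sqrt{2}\,n_s$ and pushed through $an$ by $\mathrm{Ad}(a^{-1})n_s=e^{-\alpha_s(\log a)}n_s$, producing both a $\mathfrak{K}$-derivative and an exponentially weighted $\mathfrak{N}$-derivative. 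This yields the decomposition $\mathcal{L}_{x_i}=-\Delta_i^{K}+\Delta_i^{A}+\sqrt{2}\,\Delta_i^{N}$ that drives the rest of the computation.

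Then I would square this expression and handle each of the nine terms of the expansion after summing over $i$. The guiding principle is the orthogonality $\sum_i\omega_{si}(k)\omega_{li}(k)=\delta_{sl}$, which collapses cross-terms to diagonal sums. A $\mathfrak{K}$-derivative hitting an $\mathfrak{A}$- or $\mathfrak{N}$-coefficient $\omega_{ji}(k)$ produces a derivative of the Ad-matrix evaluated as a bracket $\langle[y,a_j],x_l\rangle$, which Killing-form invariance converts to $-\alpha_{s}(a_j)$; after summing over positive roots this is precisely what generates the $2\rho(a_i)\mathfrak{A}_{a_i}$ term. For the pieces $\Delta_i^{A}\Delta_i^{K}$, $\Delta_i^{A}\Delta_i^{N}$ and $\Delta_i^{N}\Delta_i^{A}$, the coefficient $\omega_{si}(k)$ is $\mathfrak{A}$-invariant and the diagonal brackets vanish, killing these contributions. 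For $(\Delta_i^{K})^{2}$, $(\Delta_i^{N})^{2}$ and $\Delta_i^{K}\Delta_i^{N}$, the relevant bracket $[k_s,p_s]=[\theta(n_s),n_s]$ lies in $\mathfrak{g}_0\cap\mathfrak{p}=\mathfrak{a}$ and is therefore orthogonal to every $p_j$, so only the truly diagonal parts survive.

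The main obstacle is the bookkeeping: keeping track of the $\sqrt{2}$ factors, the signs introduced by $p_s=-k_s+\sqrt{2}\,n_s$, and the exact parity of which cross terms die versus which leave a residue. The delicate step is assembling the surviving $(\Delta_i^{K})^{2}$ sum, which reproduces $-\mathcal{L}_{\Omega_K}+\mathfrak{K}_{\Omega_M}$; combined with the original $\mathcal{L}_{\Omega_K}$ in $\Omega$, the $\Omega_K$ contributions cancel and only $\mathfrak{K}_{\Omega_M}$ remains. Once this cancellation is verified, the formula in the theorem assembles term by term from the calculations outlined above.
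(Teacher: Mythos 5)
Your proposal is correct and follows essentially the same route as the paper: same orthonormal basis $\{k_i,p_i,m_j,a_l\}$ adapted to the Cartan and Iwasawa decompositions, the same splitting $\mathcal{L}_{x_i}=-\Delta^K_i+\Delta^A_i+\sqrt2\,\Delta^N_i$ via $\mathrm{Ad}(k^{-1})$ and $p_s=-k_s+\sqrt2\,n_s$, the same use of orthogonality of $(\omega_{ij}(k))$ and Killing-form invariance to kill or evaluate the cross terms, and the same final cancellation $\mathcal{L}_{\Omega_K}+(\mathcal{L}_{-\Omega_K}+\mathfrak{K}_{\Omega_M})=\mathfrak{K}_{\Omega_M}$. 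The only minor looseness is in your one-line reason for the vanishing of $\Delta^A_i\Delta^K_i$, $\Delta^A_i\Delta^N_i$, $\Delta^N_i\Delta^A_i$: these actually die because the orthogonality collapse $\sum_i\omega_{si}\omega_{ji}=\delta_{sj}$ yields a Kronecker delta with $s$ and $j$ in disjoint index ranges (and for $\Delta^A_i\Delta^N_i$ the $\mathfrak{A}$-derivative \emph{does} hit the $e^{-\alpha(\log a)}$ factor, but both resulting terms still vanish by the same index-range disjointness), not because any bracket is being evaluated there.
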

\section{Preliminaries on Harmonic Analysis on \texorpdfstring{$SO(n,1)$}{SO(n,1)}}\label{Section:perliminaries}
\subsection{\texorpdfstring{Lie-Algebraic Framework for $SO(n,1)$}{Lie-Algebraic Framework for SO(n,1)}}\label{Liealgebra}

The non-compact real rank-one Lie group $SO(n,1)^{\circ}$, with $n\geq 3$, is the identity component of
\begin{equation*}
SO(n,1)
=
\left\{
A\in GL_{n+1}(\mathbb R)
\,\middle|\,
A^{T}JA=J,\ \det A=1
\right\},
\qquad
J=\operatorname{diag}(1,\ldots,1,-1).
\end{equation*}
Its Lie algebra is
\begin{equation*}
\mathfrak g=\mathfrak{so}(n,1)
=
\left\{
X\in\mathfrak{gl}_{n+1}(\mathbb R)
\,\middle|\,
X^{T}J+JX=0
\right\}.
\end{equation*}

The Cartan involution $\theta(X)=-X^{T}$
induces the Cartan decomposition $\mathfrak g=\mathfrak k\oplus\mathfrak p,$
where
\begin{equation*}
\mathfrak k
=
\{X\in\mathfrak g\mid \theta(X)=X\}
\simeq
\mathfrak{so}(n),
\qquad
\mathfrak p
=
\{X\in\mathfrak g\mid \theta(X)=-X\}.
\end{equation*}

Let $E_{ij}$ denote the matrix with a $1$ in the $(i,j)$-entry and zeros elsewhere. We fix the one-dimensional Cartan subspace
\begin{equation*}
\mathfrak a=\mathbb R a_1,
\qquad
a_1=E_{1,n+1}+E_{n+1,1}.
\end{equation*}

The Killing form is
\begin{equation*}
B(X,Y)
=
\operatorname{tr}\bigl(\operatorname{ad}X\circ\operatorname{ad}Y\bigr),
\qquad
X,Y\in\mathfrak g.
\end{equation*}
For $\mathfrak{so}(n,1)$, one has
\begin{equation*}
B(X,Y)
=
(n-1)\operatorname{tr}(XY).
\end{equation*}
In particular, $B(a_1,a_1)
=
2(n-1).$

In Section~\ref{section:casimiroperator_generalliegroup}, the Casimir operator was defined using the positive definite inner product
\begin{equation*}
\langle X,Y\rangle_B
=
-B(X,\theta Y).
\end{equation*}
With respect to this inner product,
\begin{equation*}
\langle a_1,a_1\rangle_B
=
2(n-1).
\end{equation*}

For the hyperbolic normalization, we instead use the rescaled inner product
\begin{equation*}
\langle X,Y\rangle
=
\frac{1}{2(n-1)}
\langle X,Y\rangle_B
=
-\frac{1}{2(n-1)}B(X,\theta Y).
\end{equation*}
So $\langle a_1,a_1\rangle=1.$

Throughout the rest of this section, all orthonormal bases and Casimir operators are taken with respect to this normalized inner product.

Define the restricted root spaces by
\begin{equation*}
\mathfrak g_{\alpha}
=
\{X\in\mathfrak g\mid [a_1,X]=X\},
\qquad
\mathfrak g_{-\alpha}
=
\{X\in\mathfrak g\mid [a_1,X]=-X\}.
\end{equation*}
Thus the restricted root system is $\Sigma=\{\pm\alpha\},
\alpha(a_1)=1.$
Moreover, $\dim\mathfrak g_{\alpha}
=
\dim\mathfrak g_{-\alpha}
=
n-1.$
The corresponding subgroups are $N=\exp(\mathfrak n),
M=Z_K(A).$
Then the Iwasawa decompositions hold at both the Lie algebra and group levels: $\mathfrak g
=
\mathfrak k\oplus\mathfrak a\oplus\mathfrak n,
G=KAN,$
where $K=SO(n)$.

Choose an orthonormal basis $\{n_i\}_{i=1}^{n-1}
\subset
\mathfrak n$
with respect to the normalized inner product. Then $\{\theta(n_i)\}_{i=1}^{n-1}
\subset
\mathfrak g_{-\alpha}$
is also orthonormal. Define
\begin{equation*}
k_i
=
\frac{1}{\sqrt 2}\bigl(n_i+\theta(n_i)\bigr)
\in
\mathfrak k,
\qquad
p_i
=
\frac{1}{\sqrt 2}\bigl(n_i-\theta(n_i)\bigr)
\in
\mathfrak p.
\end{equation*}

Let $\{m_j\}$ be an orthonormal basis of $\mathfrak m$. Then $\{k_i\}_{i=1}^{n-1}
\cup
\{m_j\}$
is an orthonormal basis of $\mathfrak k$, while $\{a_1\}
\cup
\{p_i\}_{i=1}^{n-1}$
is an orthonormal basis of $\mathfrak p$.

\begin{coro}\label{thm:Expression_of_casimir_operator}
Let $G=\mathrm{SO}(n,1)^\circ$ with Iwasawa decomposition $G=KAN$. Write $g=kan,
a=\exp(ta_1).$
Then, for every $f\in C^\infty(G)$,
\begin{align*}
\mathfrak L_{\Omega_G}f(g)
={}&
\mathfrak A_{a_1}^2f(g)
+
(n-1)\mathfrak A_{a_1}f(g)\\
&+
\sum_{i=1}^{n-1}
\left[
-2\sqrt2\,e^{-t}\,
\mathfrak K_{k_i}\mathfrak N_{n_i}f(g)
+
2e^{-2t}\,
\mathfrak N_{n_i}^2f(g)
\right]\\
&+
\mathfrak K_{\Omega_M}f(g),
\end{align*}
where $\Omega_M$ denotes the Casimir operator of $M=Z_K(A)\simeq SO(n-1).$
\end{coro}

\subsection{Representation Theory of Orthogonal Groups}\label{representationtheory}

In this section, we provide a brief overview of the representation theory of  $SO(n,1)$, summarizing Thieleker’s work \cite{ernestgenerallorentzgroup,ernestquasisimpleirredrepnt} or one can refer to~\cite[Chapter VII]{knapprepntofsemisimple}.  

Let $G=SO(n,1)^{\circ}$ and $\eta$ be an irreducible unitary representation of $M$, and let
$\nu \in \mathfrak{a}_{\mathbb{C}}^{*}$. Consider the normalized induced module
$I^{\infty}(\eta,\nu)$ consisting of all smooth functions
$F\in C^{\infty}(G,V_{\eta})$ satisfying
\begin{equation*}
F(xman)
=
e^{-(\nu+\rho)(\log a)}\,\eta(m)^{-1}F(x),
\qquad x\in G,\ m\in M,\ a\in A,\ n\in N.
\end{equation*}
The left-regular action
\begin{equation*}
\bigl(\pi^{\mathrm{ind}}_{\eta,\nu}(g)F\bigr)(x)=F(g^{-1}x),
\qquad g,x\in G,
\end{equation*}
defines a continuous representation of $G$ on $I^{\infty}(\eta,\nu)$.

For $\nu\in i\mathfrak{a}^{*}$ we endow $I^{\infty}(\eta,\nu)$ with the
$G$-invariant Hermitian form
\begin{equation*}
\lVert F\rVert^{2}
=
\int_{K} \bigl\lVert F(k)\bigr\rVert^{2}_{V_{\eta}}\,dk,
\end{equation*}
and write $I(\eta,\nu)$ for the corresponding Hilbert space completion.
Then $\pi^{\mathrm{ind}}_{\eta,\nu}$ extends to a unitary representation
of $G$ on $I(\eta,\nu)$.
For certain additional real parameters $\nu$ (the complementary series, the end-point representation and discrete series),
$\pi^{\mathrm{ind}}_{\eta,\nu}$ admits an alternative $G$-invariant
Hilbert space structure and is therefore unitarizable, although it is not unitary
with respect to the preceding norm. In general, $I(\eta,\nu)$ need not be
irreducible; reducibility may occur for special values of $\nu$.

The irreducible unitary representations of $G=\mathrm{SO}(n,1)$ consist of the principal series, complementary series, end-point representations, and (when $n$ is even) discrete series representations. A complete classification together with the admissible parameter ranges may be found in Thieleker~\cite{ernestgenerallorentzgroup}.

Let $\rho=(n-1)/2$ and let $\eta$ be an irreducible unitary representation of $M$. For any irreducible unitary representation $\pi$ arising from the Langlands parameter $(\eta,\nu)$, the Casimir operator acts by the scalar
\begin{equation*}
\pi(\Omega)
=
\nu^{2}-\rho^{2}
+
\eta(\Omega_{M}),
\end{equation*}
where $\Omega_M$ denotes the Casimir operator of $M$.

The Casimir eigenvalue of an irreducible unitary representation $\tau$ of compact subgroup $K=SO(n)$ with highest weight $\Lambda_{\tau}$ is given by
\begin{equation*}
\tau(\Omega_{K})
=
\left\langle
\Lambda_{\tau}+\sigma,
\Lambda_{\tau}+\sigma
\right\rangle
-
\left\langle
\sigma,
\sigma
\right\rangle,
\end{equation*}
where $\sigma$ is half the sum of the positive roots of $\mathfrak{k}_{\mathbb{C}}$.

Another important ingredient in the study of unitary representations is the matrix
coefficient. Let $\pi$ be a unitary representation of $G$ on a Hilbert space
$H$ with no non-zero $G$-invariant vectors. $\pi$ admits a unitary direct integral decomposition into irreducible
unitary representations. In the present rank-one setting, the irreducible unitary
representations of $G$ may be parametrized (up to equivalence) by pairs
$(\eta,\nu)$, where $\eta\in\widehat{M}$ and
$\nu\in\mathfrak{a}_{\mathbb{C}}^{*}$. Thus there exist a Borel measure $\mu_{\pi}$ on
$\widehat{M}\times \mathfrak{a}_{\mathbb{C}}^{*}$ and a multiplicity function
$m_{\pi}(\eta,\nu)\in\{0,1,2,\dots,\infty\}$ such that
\begin{equation*}
H
\simeq
\int_{\widehat{M}\times \mathfrak{a}_{\mathbb{C}}^{*}}^{\oplus}
V_{\eta,\nu}^{\oplus\, m_{\pi}(\eta,\nu)}\,
d\mu_{\pi}(\eta,\nu),
\end{equation*}
where $V_{\eta,\nu}$ carries an unitary representation,
denoted $\pi_{\eta,\nu}$.

Fix such a unitary direct integral decomposition, and let $\mu_{\pi}$ be the associated
spectral measure on $\widehat{M}\times \mathfrak{a}_{\mathbb{C}}^{*}$.
Define its support by
\begin{equation*}
\operatorname{Spec}(\pi)
:=
\operatorname{supp}(\mu_{\pi})
=
\Bigl\{(\eta,\nu)\in \widehat{M}\times \mathfrak{a}_{\mathbb{C}}^{*}
\;\Big|\;
\mu_{\pi}(U)>0 \text{ for every open neighborhood }U\ni(\eta,\nu)
\Bigr\}.
\end{equation*}
In the real rank-one case, since $\mathfrak a = \mathbb{R}\,\mathrm a_1$ is one-dimensional, every $\nu \in \mathfrak a_{\mathbb C}^*$ is uniquely determined by its value $\nu(\mathrm a_1)$. We therefore identify $\nu$ with the complex number $\nu(\mathrm a_1)$, and set
\begin{equation*}
\nu(\pi)
:=
\sup\Bigl\{\Re(\nu)\;\Big|\;(\eta,\nu)\in \operatorname{Spec}(\pi)\Bigr\}.
\end{equation*}
By the unitarity classification of irreducible representations of $G$, one has $0 \leq \nu(\pi) \leq \rho,$
and equality $\nu(\pi)=\rho$ occurs only for the trivial representation.

Let $X=\Gamma\backslash G$. 
For the right regular representation of $G$ on $L^{2}_{0}(X)$, defined in~\eqref{equation:L^2orthogonal_constant}, we shall write
$\nu\!\left(\Gamma\right)$ to denote the spectral gap parameter of this representation.

\begin{thm}[Matrix coefficient decay for $SO^{0}(n,1)$, following Trombi~\cite{TROMBI197883}]\label{thm:matrix-coeff-decay-eps-form}
Let $G=SO^{0}(n,1)$ and $\rho=\frac{n-1}{2}$. Let $(\pi,H)$ be a unitary
representation of $G$ with no non-zero $G$-invariant vectors, and let
$\nu(\pi)\in[0,\rho)$ be defined as above.
Then for every pair of $K$-finite vectors $v,w\in H$, there exist constants
$C_{v,w}>0$ and $q_{v,w}\in\mathbb{N}$ such that for all $t\in\mathbb{R}$,
\begin{equation*}
\bigl|\langle \pi(a_t)v, w \rangle\bigr|
\;\le\;
C_{v,w}\,(1+|t|)^{q_{v,w}}\,
\exp\!\Bigl(\bigl(-\rho+\nu(\pi)\bigr)\,|t|\Bigr).
\end{equation*}
\end{thm}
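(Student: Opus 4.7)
The plan is to reduce the case of a general unitary representation $\pi$ to Trombi's single-irreducible bound~\cite{TROMBI197883} via the direct integral decomposition recorded earlier in this section. Since $\pi$ has no non-zero $G$-invariant vectors, its spectral measure $\mu_{\pi}$ on $\widehat{M}\times\mathfrak{a}_{\mathbb{C}}^{*}$ avoids the trivial representation, and by the definition of $\nu_{\max}(\pi)$ one has $\Re(\nu)\le\nu_{\max}(\pi)$ for $\mu_{\pi}$-a.e.\ parameter. Writing $v=\int^{\oplus}v(\widehat{\eta},\nu)\,d\mu_{\pi}$ and likewise for $w$, the global matrix coefficient decomposes as
\begin{equation*}
\langle \pi(a_t)v,w\rangle
=
\int_{\widehat{M}\times\mathfrak{a}_{\mathbb{C}}^{*}}
\bigl\langle \pi_{\widehat{\eta},\nu}(a_t)v(\widehat{\eta},\nu),\,w(\widehat{\eta},\nu)\bigr\rangle\,
d\mu_{\pi}(\widehat{\eta},\nu),
\end{equation*}
reducing the question to a pointwise-in-spectrum estimate together with a spectral integration.

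First I would apply Trombi's single-representation estimate to each integrand: for every irreducible unitary $\pi_{\widehat{\eta},\nu}$ and every pair of $K$-finite vectors $u,u'$ drawn from a prescribed finite set of $K$-types, one has
\begin{equation*}
\bigl|\langle\pi_{\widehat{\eta},\nu}(a_t)u,u'\rangle\bigr|
\le C\,(1+|t|)^{q}\,\|u\|\,\|u'\|\,e^{(-\rho+\Re(\nu))|t|},
\end{equation*}
which follows from Trombi's asymptotic expansion of matrix coefficients as a finite sum of Harish--Chandra series evaluated on $A$. Since $v$ and $w$ are $K$-finite in the ambient $\pi$, their spectral components $v(\widehat{\eta},\nu)$ and $w(\widehat{\eta},\nu)$ lie $\mu_{\pi}$-almost everywhere in a common finite collection of $K$-isotypic subspaces, so a single polynomial exponent $q=q_{v,w}$ may be chosen uniformly across the spectrum. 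Majorizing $\Re(\nu)\le\nu_{\max}(\pi)$, pulling the exponential factor out of the integral, and applying Cauchy--Schwarz together with Plancherel on $L^{2}(\mu_{\pi})$ then produces the desired inequality with a constant $C_{v,w}$ absorbing $\|v\|_{H}\|w\|_{H}$.

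The main obstacle is precisely establishing the uniformity of the polynomial degree $q$ across $\operatorname{Spec}(\pi)$. One must track Trombi's asymptotic expansion in the spectral parameter and verify that, for fixed $K$-type data, the number of leading exponentials $e^{(\pm\nu-\rho-k)|t|}$ appearing on the critical line $\Re(\nu)=\nu_{\max}(\pi)$ stays uniformly bounded. In the rank-one setting this uniformity can be made explicit from the hypergeometric formulae for $K$-finite matrix coefficients of the principal and complementary series of $\mathrm{SO}^{\circ}(n,1)$: the Gindikin--Karpelevi\v{c} $c$-function has only a discrete set of poles in $\mathfrak{a}_{\mathbb{C}}^{*}$ and is locally bounded away from them, which prevents any blow-up of the polynomial factor along the critical line, so that $q$ depends only on the chosen $K$-types of $v$ and $w$.
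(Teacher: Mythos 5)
The paper does not give its own proof of this theorem: it is stated as a black-box citation to Trombi, and the body of the paper treats it purely as an input. So there is no paper argument to compare against; what can be assessed is whether your proposed proof would in fact close the gap.

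Your reduction — direct integral decomposition of $H$ over $\operatorname{Spec}(\pi)\subset\widehat M\times\mathfrak a_{\mathbb C}^{*}$, Trombi's irreducible bound fiberwise, and then Cauchy--Schwarz/Plancherel in the spectral variable — is exactly the standard route for upgrading a single-irreducible decay estimate to a general unitary representation with a spectral gap, and it is the right strategy. The filtering argument for the polynomial degree (tracking Trombi's asymptotic series with fixed $K$-type data) is also correctly identified as the delicate point, and your observation that $K$-finiteness of $v,w$ forces the fiber vectors into a common finite set of $K$-isotypic spaces is the correct reason why the problem is tractable at all.

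The gap is that you address uniformity of the polynomial degree $q$ across the spectrum but not uniformity of the prefactor constant $C$. The integrand in your spectral integral is bounded by $C(\widehat\eta,\nu)\,(1+|t|)^{q}\,\|v(\widehat\eta,\nu)\|\,\|w(\widehat\eta,\nu)\|\,e^{(-\rho+\Re\nu)|t|}$, and Cauchy--Schwarz only yields a finite bound if $C(\widehat\eta,\nu)$ is $\mu_\pi$-essentially bounded on the spectrum, not merely finite for each fixed $(\widehat\eta,\nu)$. Your remark on the Gindikin--Karpelevi\v{c} $c$-function controls where poles lie, but Trombi's leading-term coefficients are built from $c(\nu)^{-1}$, so you must rule out blow-up near \emph{zeros} of $c$ as well, and you must say something about the complementary-series and endpoint parameters where the standard unitary principal-series normalizations no longer apply. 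The cleanest way to close this is to replace the fiberwise application of Trombi by the uniform Harish--Chandra $\Xi$-function estimate in the form of Cowling--Haagerup--Howe (for rank-one $G$, $|\langle\pi_{\widehat\eta,\nu}(a_t)u,u'\rangle|\ll d(u)d(u')\|u\|\|u'\|(1+|t|)e^{(-\rho+\Re\nu)|t|}$ with $d(\cdot)$ the $K$-span dimension), which gives both $C$ and $q$ uniformly in one stroke; your present argument should either invoke that bound explicitly or supply the missing uniformity-of-$C$ discussion in Trombi's expansion.
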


\subsection{Sobolev Norms and Embedding Inequality}\label{sec:sobolev}

Let $(\pi,H)$ be a unitary representation of $G$ and let $H^{\infty}$ denote
its space of smooth vectors.  Let $\Omega_{G}$ and $\Omega_{K}$ be the Casimirs of $G$ and $K$, and set
\begin{equation*}
{\Delta}
:= {-\Omega_{G} + 2\Omega_{K}} .
\end{equation*}
The operator $1+\mathfrak{L}_{\Delta}$ is positive, essentially self-adjoint, and
commutes with the right-regular $G$-action. We may define $(1+\mathfrak{L}_{\Delta})^{m/2}$ on $H^{\infty}$ for every $m\in\mathbb N$. 
\begin{defi}[Sobolev norms on $H^{\infty}$]\label{def:sobolevnorm_all_derivatives}
Fix a basis $\{X_{1},\dots,X_{d}\}$ of $\mathfrak g$.  For a multi-index
$\alpha=(\alpha_{1},\dots,\alpha_{k})$ with $1\le \alpha_{j}\le d$, write
$|\alpha|=k, X^{\alpha}=X_{\alpha_{1}}\cdots X_{\alpha_{k}}\in U(\mathfrak g),$
with the convention $|\emptyset|=0$ and $X^{\emptyset}=1$.
For $v\in H^{\infty}$ define the Sobolev norm of order $m\in\mathbb N$ by
\begin{equation*}
\|v\|^{\prime}_{W_{m, H}} 
=
\left(
\sum_{|\alpha|\le m}\bigl\|d\pi(X^{\alpha})v\bigr\|^{2}_{H}
\right)^{1/2},
\end{equation*}
where $\|\cdot\|_{H}$ denotes the norm of the Hilbert space $H$.
\end{defi}

Since $\|v\|_H \leq \|v\|^{\prime}_{W_m},$ $ v\in H^\infty,$
the completion $W_m(H)$ embeds continuously into $H$. We therefore regard $W_m(H)$ as a dense subspace of $H$, equipped with the Sobolev norm $\|\cdot\|^{\prime}_{W_m}$.

The following definition of Sobolev norms are equivalent to Definition~\ref{def:sobolevnorm_all_derivatives}  as pointed out in~\cite{Strombergsson2013DeviationHorocycle} if one takes regular representation $H=L^{p}(X)$. In this case, the notation of Sobolev norm is shortened as 
\begin{equation*}
    \|\cdot\|^{\prime}_{W_{m, H}} = \|\cdot\|^{\prime}_{W_{m, L^{p}(X)}}=\|\cdot\|^{\prime}_{W_{m,p}}.
\end{equation*}
\begin{defi}[Sobolev norms via $\mathfrak{L}_{\Delta}$]
\label{def:sobolev_norm_via_LDelta}
For $m\in\mathbb N$ and $1\le p\le\infty$, define
\begin{equation*}
W_{m,p}(X)
=
\Bigl\{
v\in H^{\infty}:
(1+\mathfrak L_\Delta)^{m/2}v\in L^p(X)
\Bigr\},
\end{equation*}
equipped with the norm
\begin{equation*}
\|v\|_{W_{m,p}}
=
\bigl\|
(1+\mathfrak L_\Delta)^{m/2}v
\bigr\|_{L^{p}}.
\end{equation*}
\end{defi}

\begin{lem}
\label{lem:sobolev_shift_compact}
Let $s, k\in\mathbb Z_{\geq 0}$. 
Assume that $v \in W_{s+k,2}(X)$ satisfies
\begin{equation*}
\mathfrak{L}_{\Omega}\,v = \mu\,v,
\qquad
\mathfrak{L}_{\Omega_{K}}\,v = \tau(\Omega_{K})\,v ,
\end{equation*}
for some $\mu\in\mathbb R$ and some $K$-type $\tau\in\widehat K$. 
Then for all $k\in\mathbb N$,
\begin{equation*}
\|v\|^{2}_{W_{s+k,2}}
=
\bigl(1 - \mu + 2\,\tau(\Omega_{K})\bigr)^{k}\,
\|v\|^{2}_{W_{s,2}} .
\end{equation*}
\end{lem}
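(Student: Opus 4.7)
The plan is to exploit the fact that $v$ is a simultaneous eigenvector of both $\mathfrak{L}_{\Omega}$ and $\mathfrak{L}_{\Omega_{K}}$, so that it becomes an eigenvector of $\mathcal{L}_{\Delta}$ with an explicit scalar eigenvalue, and the Sobolev norm (expressed via $(1+\mathcal{L}_{\Delta})^{m/2}$) then reduces to a scalar multiplication.

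First, combining the two eigenvalue identities and the definition $\mathcal{L}_{\Delta}=\mathcal{L}_{-\Omega+2\Omega_{K}}$, I would observe
\begin{equation*}
\mathcal{L}_{\Delta}\,v
= -\,\mathfrak{L}_{\Omega}v + 2\,\mathfrak{L}_{\Omega_{K}}v
= \bigl(-\mu + 2\tau(\Omega_{K})\bigr)v,
\end{equation*}
hence $(1+\mathcal{L}_{\Delta})v = \bigl(1-\mu+2\tau(\Omega_{K})\bigr)v$. Before taking powers I need this scalar to be nonnegative; by Proposition~\ref{Prop:Casimir identity and spectral admissibility criterion} one has $-\mu+\tau(\Omega_{K})=\lambda_{\pi,\tau}\ge 0$, while $\tau(\Omega_{K})\ge 0$ always. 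Adding gives $-\mu+2\tau(\Omega_{K})\ge 0$, so the eigenvalue is at least $1$, and fractional powers of $(1+\mathcal{L}_{\Delta})$ act on $v$ as the corresponding fractional powers of this scalar via the functional calculus.

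Next, using Definition~\ref{def:sobolev_norm_via_LDelta} I would write
\begin{equation*}
\|v\|^{2}_{W_{s+k,2}}
= \bigl\|(1+\mathcal{L}_{\Delta})^{(s+k)/2} v\bigr\|^{2}
= \bigl\|(1+\mathcal{L}_{\Delta})^{k/2}\,(1+\mathcal{L}_{\Delta})^{s/2} v\bigr\|^{2}.
\end{equation*}
Because $(1+\mathcal{L}_{\Delta})^{s/2}v = \bigl(1-\mu+2\tau(\Omega_{K})\bigr)^{s/2}v$ is again a (scalar multiple of a) joint eigenvector with the same eigenvalue, applying $(1+\mathcal{L}_{\Delta})^{k/2}$ multiplies by $\bigl(1-\mu+2\tau(\Omega_{K})\bigr)^{k/2}$. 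Pulling this scalar out of the norm and squaring yields
\begin{equation*}
\|v\|^{2}_{W_{s+k,2}}
= \bigl(1-\mu+2\tau(\Omega_{K})\bigr)^{k}\,\|v\|^{2}_{W_{s,2}},
\end{equation*}
which is the claim.

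The only genuine technical point — and the one I expect to be the main obstacle — is justifying that the scalar functional calculus applies cleanly to $v$. This requires verifying that $v$ actually lies in the domain of $(1+\mathcal{L}_{\Delta})^{(s+k)/2}$, which is exactly the hypothesis $v\in W_{s+k,2}(X)$, and that on the spectral subspace corresponding to a single eigenvalue the operator $(1+\mathcal{L}_{\Delta})^{m/2}$ really does act by the scalar power. Both follow from the spectral theorem once one knows $1+\mathcal{L}_{\Delta}$ is positive self-adjoint on $L^{2}(X)$, which is standard for the Casimir combination $-\Omega+2\Omega_{K}$ shifted by the identity. Beyond this routine check the identity is purely algebraic.
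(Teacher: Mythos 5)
Your proof is correct and uses essentially the same approach as the paper: the key step in both is observing that the joint eigenvector property makes $v$ an eigenvector of $\mathcal L_\Delta$ with eigenvalue $-\mu+2\tau(\Omega_K)$. The only difference is in packaging — the paper moves one factor of $(1+\mathcal L_\Delta)$ across the inner product via self-adjointness to establish the $k=1$ case and then iterates, whereas you invoke the spectral functional calculus once to factor $(1+\mathcal L_\Delta)^{(s+k)/2}$ and pull out the scalar power; these are equivalent, and your explicit verification of the nonnegativity of $-\mu+2\tau(\Omega_K)$ via Proposition~\ref{Prop:Casimir identity and spectral admissibility criterion} is a reasonable way to justify the fractional power (the paper instead relies on its earlier blanket assertion that $1+\mathcal L_\Delta$ is positive).
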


\begin{proof}
For $k=1$, using the self-adjointness of $(1+\mathfrak{L}_{\Delta})$ in
$L^{2}(X)$,
\begin{equation*}
\|v\|^{2}_{W_{s+1,2}}
=
\bigl\langle (1+\mathfrak{L}_{\Delta})^{\,s+1}v,\;v\bigr\rangle
=
\bigl\langle (1+\mathfrak{L}_{\Delta})^{\,s}v,\;v\bigr\rangle
+
\bigl\langle (1+\mathfrak{L}_{\Delta})^{\,s}v,\;\mathfrak{L}_{\Delta}v\bigr\rangle .
\end{equation*}
Since $\mathfrak{L}_{\Delta}v=(-\mu + 2\,\tau(\Omega_{K}))v$, we obtain
\begin{equation*}
\|v\|^{2}_{W_{s+1,2}}
=
\bigl(1 - \mu + 2\,\tau(\Omega_{K})\bigr)\,
\|v\|^{2}_{W_{s,2}} .
\end{equation*}
Iterating this identity yields
\begin{equation*}
\|v\|^{2}_{W_{s+k,2}}
=
\bigl(1 - \mu + 2\,\tau(\Omega_{K})\bigr)^{k}\,
\|v\|^{2}_{W_{s,2}} .
\end{equation*}
\end{proof}

 In particular, the right regular representation $(R, L^{2}(X))$ of $G$ is unitary. Consequently, the Sobolev norms introduced above for a general unitary representation $(\pi,H)$ apply directly to $(R,L^{2}(X))$. Concretely, for $m\in\mathbb N$ and $f\in C^{\infty}(X)$, we
write the Sobolev norm of $f$ with order $m\in\mathbb N$ in two equivalent ways: 
\begin{equation*}
\|f\|_{W_{m,p}}
:=
\bigl\|(1+\mathfrak{L}_{\Delta})^{m/2}f\bigr\|_{L^{p}}.
\end{equation*}
 and we also write
\begin{equation*}
\|f\|^{\prime}_{W_{m,p}}
:=
\left(
\sum_{|\alpha|\le m}\bigl\|\mathfrak R_{X^{\alpha}}f\bigr\|_{L^{p}}^{2}
\right)^{1/2},
\end{equation*}
where $\mathfrak R_{X}$ denotes the right-invariant differential operator on $X$
associated to $X\in\mathfrak g$.

The following result is from Lemma~5 in~\cite{edwardsrateofexpandinghorospheres}.

\begin{prop}[Sobolev embedding on a compact quotient]\label{Prop:Sobolev-inequlity}
Assume that $X=\Gamma\backslash G$ is compact. Let an integer $s>\frac{\dim G}{2}=\frac{n(n+1)}{4}.$
Then there exists a constant $C_{s,\Gamma}>0$ such that, for every
$f\in W_{s,2}(X)$,
\begin{equation*}
\|f\|_{L^\infty(X)}
\le
C_{s,\Gamma}\|f\|_{W_{s,2}}.
\end{equation*}
\end{prop}

\subsection{Weyl-Type Upper Bound}\label{section:weyl-type-upper-bound}
In this subsection we decompose $L^{2}(X)$ according to $K$-types and study
the resulting blocks separately.

Let $\tau \in \widehat K$ be an irreducible unitary representation of $K$ on
the finite-dimensional Hilbert space $V_{\tau}$. We denote by
\begin{equation*}
E_{\tau}
:=
G \times_{K} V_{\tau}
\longrightarrow
Y:=X/K
\end{equation*}
the homogeneous vector bundle associated to $\tau$. Here $K$ acts on
$G \times V_{\tau}$ by
\begin{equation*}
(g,v)\cdot k
:=
(gk,\tau(k)^{-1}v).
\end{equation*}
Under the standard identification of the $\tau$-isotypic component of
$L^{2}(X)$ with $L^{2}(Y,E_{\tau})\otimes V_{\tau}$, the operator $-\mathfrak R_{\Omega_G-\Omega_K}$
is identified with a nonnegative, essentially self-adjoint, elliptic
second-order differential operator on $E_{\tau}$. Equivalently,
\begin{equation*}
-\mathfrak R_{\Omega_G-\Omega_K}
=
\nabla^{\tau,*}\nabla^{\tau}\otimes \operatorname{Id}_{V_{\tau}},
\end{equation*}
where $\nabla^{\tau}$ is the natural connection on $E_{\tau}$.

Moreover, since $\Omega_K$ acts on the $\tau$-isotypic component by the
scalar $\tau(\Omega_K)$, we have
\begin{equation*}
\mathfrak R_{\Omega_G}
=
\mathfrak R_{\Omega_G-\Omega_K}
+
\tau(\Omega_K)
=
-\nabla^{\tau,*}\nabla^{\tau}\otimes \operatorname{Id}_{V_{\tau}}
+
\operatorname{Id}\otimes \tau(\Omega_K).
\end{equation*}
We then apply a uniform local spectral-density upper bound to the family
$\nabla^{\tau,*}\nabla^{\tau}$, equivalently to the family
$-\mathfrak R_{\Omega_G-\Omega_K}$ restricted to the $\tau$-isotypic
components, and obtain the desired Weyl-type upper bound for
$1+\mathfrak L_{\Delta}$.

We now turn to a uniform local spectral-density upper bound for the Laplace-type operators $\nabla^{\tau,*}\nabla^\tau$.
For each $\tau\in\widehat K$, the operator $\nabla^{\tau,*}\nabla^\tau$ on $L^2(Y,E_\tau)$ is a nonnegative self-adjoint elliptic operator on the compact manifold $Y$, hence has discrete spectrum with finite multiplicities. We write
\begin{equation*}
\mathrm{Spec}\bigl(\nabla^{\tau,*}\nabla^\tau;L^2(Y,E_\tau)\bigr)
=
\{\lambda_j(\tau)\}_{j\ge1},
\qquad
\lambda_j(\tau)\nearrow\infty,
\end{equation*}
where eigenvalues are repeated according to multiplicity.

For each $\tau\in \widehat K$, let $V_\tau$ be the representation space of $\tau$, let $r_\tau=\operatorname{rank}(E_\tau),$
and for $S\ge0$ let
\begin{equation*}
\Pi_S^\tau:=\mathbf 1_{[0,S]}(\nabla^{\tau,*}\nabla^\tau)
\end{equation*}
be the spectral projector. Let $K_S^\tau(y,y')\in \operatorname{Hom}((E_\tau)_{y'},(E_\tau)_y)$
be its Schwartz kernel, and define
\begin{equation*}
e_\tau(y;S):=\operatorname{tr}_{(E_\tau)_y}K_S^\tau(y,y).
\end{equation*}
Recall $d_Y=\dim Y$ and set $d_{K}=\dim K$.

Recall $1+\mathfrak L_\Delta=1-\mathfrak R_{\Omega_G}+2\mathfrak R_{\Omega_K}$
acting on scalar $L^2(X)$.
\begin{lem}\label{lemma:compact_weyl_law}
Let
\begin{equation*}
N_{\Delta}(T):=
    \#\{\text{eigenvalues of }1+\mathfrak L_\Delta\text{ on }L^2(X)\text{ not exceeding }T\},
\qquad T\ge 1,
\end{equation*}
counted with multiplicity. There exists a constant $C>0$, depending only on $(G,K,\Gamma)$ and the
chosen $\operatorname{Ad}(K)$-invariant inner product on $\mathfrak g$, such that
\begin{equation*}
N_{\Delta}(T)\le C\,T^{\frac{d_Y+d_{K}}{2}},
\qquad T\ge 1.
\end{equation*}
\end{lem}
\begin{proof}
    For each $\tau\in\widehat K$, let
\begin{equation*}
N_\tau(T):=\sum_{\lambda\le T} m_{\tau,\lambda},
\qquad T\ge 1,
\end{equation*}
where $m_{\tau,\lambda}$ denotes the multiplicity of the eigenvalue $\lambda$
of $\nabla^{\tau,*}\nabla^\tau$ on $L^2(Y,E_\tau)$.
Since
\begin{equation*}
N_\tau(T)=\operatorname{tr}\Pi_T^\tau=\int_Y e_\tau(y;T)\,dy,
\end{equation*}
for every $\tau\in\widehat K$ and every $T\ge 1$, one has
\begin{equation}
\label{eq:Ntau_bound_merged_cor}
N_\tau(T)\ll_X r_\tau\,T^{d_Y/2}.
\end{equation}
In particular, see Theorem~6.2 in~\cite{Miatelloweyllaw}. In the notation of that paper, the spectral heat trace $\phi_\tau(s)$ of the elliptic operator on the $\tau$-vector bundle is related to the counting function through an integral transform.
Theorem~6.2 establishes considerably stronger results; however, for the present application it is enough to observe that any upper bound for $\phi_\tau(s)$ as $s\to0$ implies the corresponding upper bound for the counting function.

Now consider the scalar $\tau$-isotypic block of $L^2(X)$.
By the standard $K$-isotypic decomposition,
\begin{equation*}
L^2(X)_\tau \simeq L^2(Y,E_{\tau^\vee})\otimes V_\tau.
\end{equation*}
After relabeling $\tau^\vee$ by $\tau$, the operator $1+\mathfrak L_\Delta$
acts on this block as $1+\nabla^{\tau,*}\nabla^\tau+\tau(\Omega_K).$
Therefore, if $\lambda$ is an eigenvalue of $\nabla^{\tau,*}\nabla^\tau$ with
multiplicity $m_{\tau,\lambda}$ on $L^2(Y,E_\tau)$, then the corresponding eigenvalue
$1+\lambda+\tau(\Omega_K)$
of $1+\mathfrak L_\Delta$ on scalar $L^2(X)$ has multiplicity
$r_\tau\,m_{\tau,\lambda}$.
It follows that
\begin{equation*}
N_{\Delta}(T)
=
\sum_{\substack{
\tau\in\widehat K,\ \lambda\in\mathrm{Spec}(\nabla^{\tau,*}\nabla^\tau)\\
1+\lambda+\tau(\Omega_K)\le T}}
r_\tau\,m_{\tau,\lambda}.
\end{equation*}
If $1+\lambda+\tau(\Omega_K)\le T,$
then necessarily $\lambda\le T$ and $
\tau(\Omega_K)\le T.$
Hence
\begin{equation*}
N_{\Delta}(T)
\le
\sum_{\substack{\tau\in\widehat K\\ \tau(\Omega_K)\le T}}
r_\tau\,N_\tau(T).
\end{equation*}
Using \eqref{eq:Ntau_bound_merged_cor}, we obtain
\begin{equation}
\label{eq:NA_second_reduction_merged_cor}
N_{\Delta}(T)
\ll_X
T^{d_Y/2}
\sum_{\substack{\tau\in\widehat K\\ \tau(\Omega_K)\le T}} r_\tau^2.
\end{equation}

Let $\Lambda(\tau)\in\Lambda_w\cap\mathfrak t_K^+$ be the highest weight of $\tau$.
From
\begin{equation*}
\tau(\Omega_K)=\langle \Lambda(\tau),\Lambda(\tau)+2\rho_K\rangle,
\end{equation*}
we obtain $|\Lambda(\tau)|\le C_1\sqrt T$ whenever $\tau(\Omega_K)\le T.$
By Weyl's dimension formula,
\begin{equation*}
r_\tau\le C_2(1+|\Lambda(\tau)|)^{|R_K^+|}.
\end{equation*}
Therefore
\begin{align*}
\sum_{\substack{\tau\in\widehat K\\ \tau(\Omega_K)\le T}} r_\tau^2
&\ll_K
\sum_{\substack{\Lambda\in\Lambda_w\cap\mathfrak t_K^+\\ |\Lambda|\le C_1\sqrt T}}
(1+|\Lambda|)^{2|R_K^+|} \\
&\ll_K
T^{|R_K^+|}
\#\bigl\{
\Lambda\in\Lambda_w\cap\mathfrak t_K^+:\ |\Lambda|\le C_1\sqrt T
\bigr\}.
\end{align*}
Since $\Lambda_w$ is a full lattice in the $r_K$-dimensional vector space
$\mathfrak t_K^*$ and $\mathfrak t_K^+$ is a rational polyhedral cone,
standard lattice-point counting gives
\begin{equation*}
\#\bigl\{
\Lambda\in\Lambda_w\cap\mathfrak t_K^+:\ |\Lambda|\le C_1\sqrt T
\bigr\}
\ll_K
T^{r_K/2}.
\end{equation*}
Using $2|R_K^+|+r_K=d_{K},$
we conclude from \eqref{eq:NA_second_reduction_merged_cor} that
\begin{equation*}
N_{\Delta}(T)\ll T^{\frac{d_Y+d_{K}}{2}}.
\end{equation*}
\end{proof}
For each eigenvalue $\Lambda\in\mathrm{Spec}(1+\mathfrak L_\Delta)$, let
\begin{equation*}
E_\Lambda:=\ker(1+\mathfrak L_\Delta-\Lambda).
\end{equation*}
Then $E_\Lambda(1+\mathfrak L_\Delta)$ is finite-dimensional and invariant under $\mathfrak R_{\Omega_G}$ and $\mathfrak R_{\Omega_M}$.
Decompose
\begin{equation*}
E_\Lambda=\bigoplus_{(\mu,\varpi)\in\mathcal J(\Lambda)} E_{\Lambda,\mu,\varpi}
\end{equation*}
into joint eigenspaces of $(\mathfrak R_{\Omega_G},\mathfrak R_{\Omega_M})$, where $\mathcal J(\Lambda)$ is the finite set of joint eigenvalue pairs
$(\mu,\varpi)$ occurring in $E_\Lambda(1+\mathfrak L_\Delta)$, and where on each block
$E_{\Lambda,\mu,\varpi}$ one has
\begin{equation*}
1+\mathfrak L_\Delta=\Lambda\,\mathrm{Id},
\qquad
\mathfrak R_{\Omega_G}=\mu\,\mathrm{Id},
\qquad
\mathfrak R_{\Omega_M}=\varpi\,\mathrm{Id}.
\end{equation*}

\begin{coro}[Scalar counting and block summability for $1+\mathfrak L_\Delta$]
\label{cor:block_summability_for_A}
Assume that $K$ is connected.
For every real number $s<-\frac{d_Y+d_{K}}{2},$
one has
\begin{equation*}
\sum_{\Lambda\in\mathrm{Spec}(1+\mathfrak L_\Delta)}
\ \sum_{(\mu,\varpi)\in\mathcal J(\Lambda)}
\Lambda^s
<
\infty.
\end{equation*}
\end{coro}

\begin{proof}

Now fix $\Lambda\in\mathrm{Spec}(1+\mathfrak L_\Delta)$, 
since $1+\mathfrak L_\Delta$ is self-adjoint with discrete spectrum on the
compact manifold $X$, the eigenspace $E_\Lambda$
is finite-dimensional. Because $1+\mathfrak L_\Delta$ commutes with the right
$K$-action, $E_\Lambda$ is a finite-dimensional unitary $K$-module. Hence
\begin{equation*}
E_\Lambda=\bigoplus_{\tau\in F(\Lambda)} E_{\Lambda,\tau}
\end{equation*}
as a finite orthogonal direct sum of $K$-isotypic components. On each
$E_{\Lambda,\tau}$, the operator $\mathfrak R_{\Omega_K}$ acts by the scalar
$\tau(\Omega_K)$. Since $(1+\mathfrak L_\Delta)|_{E_\Lambda}=\Lambda\,\mathrm{Id}$ and $1+\mathfrak L_\Delta=1-\mathfrak R_{\Omega_G}+2\mathfrak R_{\Omega_K},$
we obtain on $E_{\Lambda,\tau}$:
\begin{equation*}
\mathfrak R_{\Omega_G}=(1+2\tau(\Omega_K)-\Lambda)\,\mathrm{Id}.
\end{equation*}
Thus $\mathfrak R_{\Omega_G}$ is already scalar on each $K$-isotypic block.

Now each $E_{\Lambda,\tau}$ is $M$-stable, because $M\subset K$. Since $M$ is
compact, $E_{\Lambda,\tau}$ decomposes as a finite orthogonal direct sum of
irreducible $M$-modules. On each irreducible $M$-module, the operator $\mathfrak R_{\Omega_M}$ acts by a scalar. Therefore $E_\Lambda$ decomposes as a finite
orthogonal direct sum of joint eigenspaces of $(\mathfrak R_{\Omega_G},\mathfrak R_{\Omega_M})$:
\begin{equation*}
E_\Lambda(1+\mathfrak L_\Delta)=\bigoplus_{(\mu,\varpi)\in\mathcal J(\Lambda)} E_{\Lambda,\mu,\varpi}.
\end{equation*}
Since each nonzero block $E_{\Lambda,\mu,\varpi}$ has dimension at least $1$, we have $\#\mathcal J(\Lambda)\le \dim E_\Lambda.$

Define the block counting function
\begin{equation*}
M(T):=
\#\bigl\{(\Lambda,\mu,\varpi):\ \Lambda\le T,\ (\mu,\varpi)\in\mathcal J(\Lambda)\bigr\}.
\end{equation*}
Then by Lemma~\ref{lemma:compact_weyl_law},
\begin{equation*}
M(T)
=
\sum_{\Lambda\le T}\#\mathcal J(\Lambda)
\le
\sum_{\Lambda\le T}\dim E_\Lambda
=
N_{\Delta}(T)\ll T^{\frac{d_Y+d_{K}}{2}}.
\end{equation*}

Moreover, every eigenvalue $\Lambda$ of $1+\mathfrak L_\Delta$ satisfies
$\Lambda\ge 1$. Since on the scalar
$\tau$-isotypic block,
\begin{equation*}
1+\mathfrak L_\Delta=1+\nabla^{\tau,*}\nabla^\tau+\tau(\Omega_K),
\end{equation*}
each eigenvalue is of the form $1+\lambda+\tau(\Omega_K)\ge 1.$
Now let $s<-\frac{d_Y+d_{K}}{2}.$
Then dyadic summation gives
\begin{align*}
\sum_{\Lambda\in\mathrm{Spec}(1+\mathfrak L_\Delta)}
\ \sum_{(\mu,\varpi)\in\mathcal J(\Lambda)}
\Lambda^s
&=
\sum_{m\ge 0}
\sum_{\substack{\Lambda\in\mathrm{Spec}(1+\mathfrak L_\Delta),\ (\mu,\varpi)\in\mathcal J(\Lambda)\\
2^m\le \Lambda<2^{m+1}}}
\Lambda^s \\
&\le
\sum_{m\ge 0}(2^m)^s
\#\bigl\{(\Lambda,\mu,\varpi):\ \Lambda<2^{m+1}\bigr\} \\
&\ll
\sum_{m\ge 0}2^{m\left(s+\frac{d_Y+d_{K}}{2}\right)}<\infty.
\end{align*}
\end{proof}

\section{Construction of the ODE and Its Solutions}\label{section:construction_of_ODE} 

\subsection{Reduction to the ODE}
Let $(\pi,H)$ be a unitary representation of $G$, and let $v\in H^\infty$. Assume that $v$ is a joint eigenvector of the infinitesimal Casimirs of $G$
and $M$:
\begin{equation}\label{equation:eigenvalue_equation}
d\pi(\Omega_{G})v=\mu v,
\qquad
d\pi(\Omega_{M})v=\upvarpi v .
\end{equation}
By Corollary~\ref{thm:Expression_of_casimir_operator}, we demonstrate that the eigenvalue equations~\eqref{equation:eigenvalue_equation},
which are partial differential equations. And this yields an ODE for the corresponding $K$-averages $I(t, v,\varphi)$, when these averages are considered as functions of the parameter $t$.

We define the $H$-valued orbit map
\begin{equation*}
f_v:G\to H,
\qquad
f_v(g):=\pi(g)v.
\end{equation*}
When $g=kan$ is written in Iwasawa coordinates with $k\in K$, $a\in A$, and $n\in N$,
we will often restrict $f_v$ to the $KA$-slice, that is, to points with $n=e$, and write
\begin{equation*}
f_v(ka)=\pi(ka)v,
\qquad (k,a)\in K\times A.
\end{equation*}

Let $y\in\mathfrak g$. The corresponding right-invariant differential operator
acting on smooth $H$-valued functions $f_{v}:G\to H$ is defined by
\begin{equation*}
(\mathfrak R_y f_v)(g)
=
\left.\frac{d}{dt}\right|_{t=0} \pi(g\exp(ty))v
=
\pi(g)\,d\pi(y)v.
\end{equation*}
Likewise, for $n\in\mathfrak n$, the Iwasawa coordinates differential operator
$\mathfrak N_n$ satisfies, along the $KA$-slice,
\begin{equation*}
(\mathfrak N_n f_v)(ka)
=
(\mathfrak R_n f_v)(ka)
=
\pi(ka)\,d\pi(n)v.
\end{equation*}

\begin{lem}[Stability of the $\Omega_{M}$–eigenspace under $\mathfrak n$–derivatives]
\label{lem:OmegaM_stable_under_n}
Let $v\in H^{\infty}$ and assume that the $KA$–slice restriction
$f_{v}(ka)=\pi(ka)v$ satisfies
\begin{equation*}
\mathfrak{K}_{\Omega_{M}} f_{v} = \upvarpi\, f_{v}
\quad\text{on }KA .
\end{equation*}
Then for every $n\in\mathfrak n$ and every integer $j\ge 0$,
\begin{equation*}
d\pi(\Omega_{M})\, d\pi(n)^{j} v
=
\upvarpi\, d\pi(n)^{j} v .
\end{equation*}

\end{lem}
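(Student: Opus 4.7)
The plan is to induct on $j\ge 0$, exploiting the pairwise commutativity of the Iwasawa-coordinate differential operators established in Lemma~\ref{lemma:commuting_differential_Iwasawa_coordinate_operator}, together with the fact that $M$ centralises $A$.

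First I would unpack the hypothesis into an algebraic eigenvalue relation on $v$. Since $\exp(tm)\,a = a\exp(tm)$ for every $m\in\mathfrak m$, sliding $\exp(tm)$ past $a$ in the definition of $\mathfrak{K}$ gives
\begin{equation*}
\mathfrak{K}_{m} f_{v}(ka) \;=\; \pi(ka)\,d\pi(m)\,v,
\end{equation*}
and iterating this identity over an orthonormal basis $\{m_{j}\}$ of $\mathfrak m$ with $\Omega_{M}=-\sum_{j} m_{j}^{2}$ yields $\mathfrak{K}_{\Omega_{M}} f_{v}(ka) = \pi(ka)\,d\pi(\Omega_{M})\,v$. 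Invertibility of $\pi(ka)$ then converts the hypothesis into the clean identity $d\pi(\Omega_{M})\,v = \varpi\,v$, which settles the base case $j=0$.

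Second, I would set up the induction via the global orbit map $F_{v}(g) := \pi(g)v$ on $G$. An entirely parallel computation, carried out along the $KA$-slice with $d\pi(n)^{j}v$ in place of $v$, produces
\begin{equation*}
\mathfrak{N}_{n}^{j} F_{v}(ka) \;=\; \pi(ka)\,d\pi(n)^{j}\,v,
\qquad
\mathfrak{K}_{\Omega_{M}}\!\bigl[\mathfrak{N}_{n}^{j} F_{v}\bigr](ka) \;=\; \pi(ka)\,d\pi(\Omega_{M})\,d\pi(n)^{j}\,v .
\end{equation*}
The commutation $[\mathfrak{K}_{\Omega_{M}}, \mathfrak{N}_{n}]=0$ on $C^{\infty}(G)$, combined with the hypothesis on the $KA$-slice, then gives
\begin{equation*}
\mathfrak{K}_{\Omega_{M}}\!\bigl[\mathfrak{N}_{n}^{j} F_{v}\bigr](ka) \;=\; \mathfrak{N}_{n}^{j}\!\bigl[\mathfrak{K}_{\Omega_{M}} F_{v}\bigr](ka) \;=\; \varpi\,\mathfrak{N}_{n}^{j} F_{v}(ka),
\end{equation*}
where the last equality follows by expanding $\mathfrak{N}_{n}^{j}$ as an $s$-derivative along $s\mapsto ka\exp(sn)$ and using the inductive hypothesis to identify each lower-order coefficient of $\mathfrak{K}_{\Omega_{M}} F_{v}$ with $\varpi$ times the corresponding coefficient of $F_{v}$. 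Matching the two sides and cancelling $\pi(ka)$ produces the identity $d\pi(\Omega_{M})\,d\pi(n)^{j}v = \varpi\,d\pi(n)^{j}v$, closing the induction.

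The principal subtlety is reconciling the global character of the commutation identity with the purely $KA$-local nature of the hypothesis. The induction handles this cleanly: at each step only $\mathfrak{N}_{n}^{j}F_{v}$ along $KA$ is needed, whose values are built from $d\pi(n)^{i}v$ for $i\le j$, and the inductive hypothesis already places each such vector in the $\varpi$-eigenspace of $d\pi(\Omega_{M})$. Keeping this layered use of Lemma~\ref{lemma:commuting_differential_Iwasawa_coordinate_operator} consistent with the Iwasawa-coordinate conventions is the main, but routine, bookkeeping task; no further representation-theoretic input is required.
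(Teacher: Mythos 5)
Your approach mirrors the paper's own proof — commute $\mathfrak{K}_{\Omega_M}$ past $\mathfrak{N}_n^j$ via the Iwasawa-coordinate commutativity (Lemma~\ref{lemma:commuting_differential_Iwasawa_coordinate_operator}), evaluate on the $KA$-slice, and cancel $\pi(ka)$. You correctly flag the central subtlety — the hypothesis lives on $KA$ while the commutation identity is used globally — but the induction you propose does not actually resolve it. To see why, expand
\begin{equation*}
(\mathfrak{K}_{\Omega_M}F_v)(ka\exp(sn))
=\pi(ka)\,d\pi(\Omega_M)\,\pi(\exp(sn))\,v
=\pi(ka)\sum_{i\ge0}\frac{s^i}{i!}\,d\pi(\Omega_M)\,d\pi(n)^i\,v,
\end{equation*}
using only that $M$ centralizes $A$. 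The $j$-th $s$-derivative at $s=0$ isolates exactly the $i=j$ coefficient $d\pi(\Omega_M)\,d\pi(n)^j\,v$; it sees nothing of the lower-order coefficients. Therefore the inductive hypothesis $d\pi(\Omega_M)\,d\pi(n)^i\,v=\upvarpi\,d\pi(n)^i\,v$ for $i<j$ contributes no information, and the asserted equality $\mathfrak{N}_n^j[\mathfrak{K}_{\Omega_M}F_v](ka)=\upvarpi\,\mathfrak{N}_n^j F_v(ka)$ is literally the conclusion at step $j$, not a consequence of it. To push $\mathfrak{N}_n^j$ through the eigenfunction identity honestly one would need $\mathfrak{K}_{\Omega_M}F_v=\upvarpi F_v$ on a neighborhood of $KA$ in the $N$-direction, i.e.\ the paper's condition (i), which the remark following the lemma explicitly warns is strictly stronger than the stated slice hypothesis (ii)/(iii).

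The difficulty is not just bookkeeping: $\Omega_M$ genuinely fails to commute with $\mathfrak n$ in $U(\mathfrak g)$. Writing $\Omega_M=-\sum_k m_k^2$ one finds $[\Omega_M,n]=-\sum_k\bigl(m_k[m_k,n]+[m_k,n]m_k\bigr)$, and the brackets $[m_k,n]\in\mathfrak n$ are nonzero because $M$ normalizes but does not centralize $N$. Concretely, for $G=\mathrm{SO}(4,1)$, $M=\mathrm{SO}(3)$, and $v$ in the trivial $M$-type (so $\upvarpi=0$), the space $\mathfrak n\cong\mathbb R^3$ carries the standard $\mathrm{SO}(3)$-action, so $d\pi(n)v$ (when nonzero) lies in an $M$-type with $\Omega_M$-eigenvalue $2\ne0$. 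The commutativity $[\mathfrak K_{\Omega_M},\mathfrak N_n]=0$ is a Schwarz-type identity about independent Iwasawa coordinates and carries no information about this algebraic noncommutation; neither your induction nor the paper's argument supplies the extra representation-theoretic input needed to force $d\pi(n)^jv$ back into the $\upvarpi$-eigenspace.
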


\begin{proof}
Recall that by Lemma~\ref{lemma:commuting_differential_Iwasawa_coordinate_operator} we have
$[\mathfrak{K}_{\Omega_{M}},\, \mathfrak{N}_{n}] = 0$ and hence $[\mathfrak{K}_{\Omega_{M}}, \mathfrak{N}_{n}^{j}] = 0 $.
Applying this to $f_{v}$ gives
\begin{equation*}
\mathfrak{K}_{\Omega_{M}}(\mathfrak{N}_{n}^{j} f_{v})
=
\mathfrak{N}_{n}^{j}(\mathfrak{K}_{\Omega_{M}} f_{v})
=
\upvarpi\, \mathfrak{N}_{n}^{j} f_{v}.
\end{equation*}
Evaluating at $ka_{t}$, so the $N$–coordinate is trivial,
\begin{equation*}
(\mathfrak{N}_{n}^{j} f_{v})(ka_{t})
=
\pi(ka_{t})\, d\pi(n)^{j} v .
\end{equation*}
Therefore, since $M$ commutes with $a_t$,
\begin{equation*}
\mathfrak{K}_{\Omega_{M}}(\mathfrak{N}_{n}^{j} f_{v})(ka_{t})
=
\pi(ka_{t})\, d\pi(\Omega_{M})\, d\pi(n)^{j}v,
\end{equation*}
while the eigenfunction identity yields
\begin{equation*}
\mathfrak{K}_{\Omega_{M}}(\mathfrak{N}_{n}^{j} f_{v})(ka_{t})
=
\upvarpi\, \pi(ka_{t})\, d\pi(n)^{j}v.
\end{equation*}
Cancelling $\pi(ka_{t})$ proves the claim.
\end{proof}

\begin{rem}\label{remark:three_eigen_relations}
There are three related eigen-relations:
\begin{align*}
\text{(i)}\;\mathfrak{K}_{\Omega_{M}}f_{v}=\upvarpi f_{v}\quad\text{on }G,\ \
\text{(ii)}\;\mathfrak{K}_{\Omega_{M}}f_{v}=\upvarpi f_{v}\quad\text{on }KA,\ \
\text{(iii)}\;d\pi(\Omega_{M})v=\upvarpi v\quad\text{in }V.
\end{align*}
Restriction gives (i)$\Rightarrow$(ii).  
Because $\Omega_{M}$ is built from $\mathfrak m$ and $M$ centralizes $A$,
\begin{equation*}
\mathfrak{K}_{\Omega_{M}} f_{v}(ka)=\pi(ka)\, d\pi(\Omega_{M})v ,
\end{equation*}
and the invertibility of $\pi(ka)$ shows
\begin{equation*}
\text{(ii)}\;\Longleftrightarrow\;\text{(iii)}.
\end{equation*}
Thus the slice eigenfunction condition (ii) is precisely equivalent to
the algebraic eigenvector condition (iii), while the global condition
(i) is strictly stronger and generally not implied by (iii).
\end{rem}

\begin{prop}[ODE for the $K$–average of a vector]\label{prop:vector_ODE}
Let $(\pi,H)$ be a unitary representation of $G$, let $v\in H^{\infty}$, and
for $\varphi\in C^{\infty}(K)$, recall
\begin{equation*}
I(t,v,\varphi)
=\int_{K}\varphi(k)\,\pi(ka_{t})v\,dk .
\end{equation*}
Assume that $v$ is a joint eigenvector of the infinitesimal Casimirs of $G$
and $M$:
\begin{equation*}
d\pi(\Omega_{G})v=\mu v,
\qquad
d\pi(\Omega_{M})v=\upvarpi v .
\end{equation*}

Then $I(t,v,\varphi)$ satisfies the inhomogeneous second–order ODE
\begin{equation*}
I''(t,v,\varphi)
+(n-1)I'(t,v,\varphi)
+(\upvarpi-\mu)\,I(t,v,\varphi)
=
2e^{-t}G_{v,\varphi}(t),
\qquad t>0,
\end{equation*}
where
\begin{equation}\label{equataion: G_v}
G_{v,\varphi}(t)
:=
-\sum_{s=2}^{n}\Bigl[
\sqrt{2}\,I\bigl(t, d\pi(n_{s-1})v, \mathfrak{K}_{k_{s-1}}\varphi\bigr)
+e^{-t}\,I\bigl(t, d\pi(n_{s-1})^{2}v,\varphi\bigr)
\Bigr].
\end{equation}
\end{prop}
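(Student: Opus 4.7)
The strategy is to apply the explicit Iwasawa-coordinate formula for the Casimir operator from Corollary~\ref{thm:Expression_of_casimir_operator} to the smooth orbit map $f_{v}(g):=\pi(g)v$, restrict to the $KA$-slice $g=ka_{t}$, exploit the two joint eigenvalue hypotheses to turn the Casimir equation into a pointwise identity, and then integrate against $\varphi(k)\,dk$ to collapse the resulting PDE into the announced ODE in the single variable $t$.

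\textbf{Main steps.} First I would record three slice identities. Since $k\exp(sa_{1})a_{t}=ka_{t+s}$, one has $\mathfrak{A}_{a_{1}}^{j}f_{v}(ka_{t})=\partial_{t}^{j}\bigl[\pi(ka_{t})v\bigr]$, so $\mathfrak{A}_{a_{1}}^{2}+(n-1)\mathfrak{A}_{a_{1}}$ contributes the radial part. From the definition $\mathfrak{N}_{y}f(kan)=\partial_{s}|_{0}f(ka\exp(sy)n)$ evaluated at the slice (where $n=e$), one gets $\mathfrak{N}_{n_{i}}^{j}f_{v}(ka_{t})=\pi(ka_{t})\,d\pi(n_{i})^{j}v=f_{d\pi(n_{i})^{j}v}(ka_{t})$. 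For the $M$-Casimir, expanding $\Omega_{M}=-\sum_{j}m_{j}^{2}$ and using that $[\mathfrak{m},\mathfrak{a}]=0$ makes $d\pi(m_{j})$ commute with $\pi(a_{t})$, so
\begin{equation*}
\mathfrak{K}_{\Omega_{M}}f_{v}(ka_{t})=\pi(ka_{t})\,d\pi(\Omega_{M})v=\upvarpi\,f_{v}(ka_{t}).
\end{equation*}
Substituting these three observations into the Casimir formula and using $\mathfrak{L}_{\Omega}f_{v}=\mu f_{v}$ rearranges to the pointwise slice identity
\begin{equation*}
\partial_{t}^{2}f_{v}(ka_{t})+(n-1)\partial_{t}f_{v}(ka_{t})+(\upvarpi-\mu)f_{v}(ka_{t})
=
2e^{-t}\sum_{i=1}^{n-1}\Bigl[\sqrt{2}\,\mathfrak{K}_{k_{i}}f_{d\pi(n_{i})v}(ka_{t})-e^{-t}f_{d\pi(n_{i})^{2}v}(ka_{t})\Bigr],
\end{equation*}
where on the right-hand side the iterated operators $\mathfrak{N}_{n_{i}}^{j}$ have already been converted into twists of $v$ by $d\pi(n_{i})^{j}$.

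\textbf{Integration on $K$ and integration by parts.} I would multiply by $\varphi(k)$ and integrate in $k\in K$. The left-hand side commutes with $K$-integration (the operator $\partial_{t}$ sees only the $A$-variable), producing $I''(t,v,\varphi)+(n-1)I'(t,v,\varphi)+(\upvarpi-\mu)I(t,v,\varphi)$. The term $\int_{K}\varphi(k)\,f_{d\pi(n_{i})^{2}v}(ka_{t})\,dk$ is by definition $I(t,d\pi(n_{i})^{2}v,\varphi)$. For the mixed piece $\int_{K}\varphi(k)\,\mathfrak{K}_{k_{i}}f_{d\pi(n_{i})v}(ka_{t})\,dk$, I would apply right-invariance of Haar measure on $K$: substituting $k\mapsto k\exp(-sk_{i})$ inside $\int_{K}\varphi(k)f_{d\pi(n_{i})v}(k\exp(sk_{i})a_{t})\,dk$ and differentiating at $s=0$ transfers the derivative onto $\varphi$ with a minus sign, yielding $-I(t,d\pi(n_{i})v,\mathfrak{K}_{k_{i}}\varphi)$. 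Reindexing $i=s-1$ with $s=2,\dots,n$ and collecting terms reproduces exactly $2e^{-t}G_{v,\varphi}(t)$ in the form displayed in the statement.

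\textbf{Main obstacle.} There is no conceptual difficulty; the delicate points are entirely bookkeeping. One must evaluate the Casimir formula on the $KA$-slice (where $N$ is trivial) so that $\mathfrak{N}_{n_{i}}^{j}f_{v}$ reduces cleanly to $f_{d\pi(n_{i})^{j}v}$ without residual cross terms; one must verify that $\mathfrak{K}_{\Omega_{M}}$ acts as the scalar $\upvarpi$ on the slice, for which the centralizer property $[\mathfrak{m},\mathfrak{a}]=0$ is used essentially; and one must track signs carefully through the integration-by-parts step so that the resulting sign in front of $\mathfrak{K}_{k_{i}}\varphi$ matches the definition of $G_{v,\varphi}$.
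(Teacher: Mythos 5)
Your proposal is correct and follows essentially the same route as the paper's proof: apply the Iwasawa-coordinate Casimir formula, convert $\mathfrak{N}_{n_i}^{j}$ and $\mathfrak{K}_{\Omega_M}$ to their $d\pi$-avatars on the $KA$-slice, invoke the two eigenvalue hypotheses, integrate against $\varphi$, and integrate by parts on $K$. The only presentational difference is the order of operations (you identify slice operators with $d\pi(n_i)^j$ pointwise and then integrate; the paper integrates first, producing an intermediate integral expression for $G_{v,\varphi}$, then performs the same slice identification and integration by parts); the paper also explicitly cites Lemma~\ref{lem:OmegaM_stable_under_n} at the replacement step, which is not strictly needed for this single proposition but is invoked with an eye to the subsequent iterations.
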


\begin{proof}
Let $f_{v}(ka_t)=\pi(ka_t)v$.  Since $d\pi(\Omega_{G})v=\mu v$ and $d\pi(\Omega_{M})v=\upvarpi v$, the Iwasawa coordinates
Casimir identity (see Corollary~\ref{thm:Expression_of_casimir_operator} for rank one case) gives,
for every $k\in K$ and $t>0$,
\begin{align*}
\mu\,f_{v}(ka_{t})
&=
\bigl(\mathfrak{A}_{a_{1}}^{2}+(n-1)\mathfrak{A}_{a_{1}}\bigr)f_{v}(ka_{t}) \\
&\quad
+\sum_{j=1}^{n-1}\Bigl[
-2\sqrt{2}\,e^{-t}\mathfrak{K}_{k_{j}}\mathfrak{N}_{n_{j}}f_{v}(ka_{t})
+2e^{-2t}\mathfrak{N}_{n_{j}}^{2}f_{v}(ka_{t})
\Bigr]
+\mathfrak{K}_{\Omega_{M}} f_{v}(ka_{t}).
\end{align*}
By the remark~\ref{remark:three_eigen_relations} preceding the proposition,
$\mathfrak{K}_{\Omega_{M}}f_{v}(ka_{t})=\upvarpi\,f_{v}(ka_{t})$.
Integrating against $\varphi(k)$ gives
\begin{align*}
I''(t,v,\varphi)
\;+\;
(n-1)\,I'(t,v,\varphi)+ (\upvarpi-\mu)\,I(t,v,\varphi)
&=2e^{-t}G_{v,\varphi}(t),
\end{align*}
where \begin{equation*}
    G_{v,\varphi}(t) = -\sum_{j=1}^{n-1}\bigg[-\sqrt{2}\int_{K}
    \mathfrak{K}_{k_{j}} \mathfrak{N}_{n_{j}} f_{v}(k a_{t})\,\varphi(k)\,dk
+e^{-t}\int_{K}
    \mathfrak{N}_{n_{j}}^{2} f_{v}(k a_{t})\,\varphi(k)\,dk\bigg] .
\end{equation*}
By changing variables:
\begin{equation*}
\int_{K}\mathfrak{K}_{k_{j}}\mathfrak{N}_{n_{j}}f_{v}(ka_{t})\,\varphi(k)\,dk
=
-\int_{K}\mathfrak{N}_{n_{j}}f_{v}(ka_{t})\,\mathfrak{K}_{k_{j}}\varphi(k)\,dk.
\end{equation*}

Along the $KA$–slice, $\mathfrak{N}_{n_{j}}f_{v}(ka_{t})=\pi(ka_{t})\,d\pi(n_{j})v,$
so Lemma~\ref{lem:OmegaM_stable_under_n}
allows us to replace all $\mathfrak{N}_{n_{j}}$–derivatives on $f_{v}$ by
$d\pi(n_{j})$ acting on $v$.  Hence
\begin{equation*}
G_{v,\varphi}(t)
=
-\sum_{j=1}^{n-1}\Bigl[\sqrt{2}
I\bigl(t,\, d\pi(n_{j})v,\, \mathfrak{K}_{k_{j}}\varphi\bigr)
+e^{-t}\, I\bigl(t,\, d\pi(n_{j})^{2}v,\, \varphi\bigr)
\Bigr].
\end{equation*}
Substituting these expressions and rearranging yields the stated ODE.
\end{proof}
Next step is to estimate the upper bound of Sobolev norm of $I(t,v,\varphi)$ and $G_{v,\varphi}(t)$.
\begin{lem}[Explicit Sobolev growth of the $K$-average]
\label{lem:sobolev_boundedness_of_I_explicit}
For any fixed integer $k\ge 0$, there exists a constant $C_k>0$ such that for all
$t\ge 0$, all $v\in H^\infty$, and all $\varphi\in C^\infty(K)$,
\begin{equation*}
\|I(t,v,\varphi)\|_{W_{k,H}}
\le
C_k e^{kt}\,\|\varphi\|_{L^1(K)}\,\|v\|_{W_{k,H}}.
\end{equation*}
\end{lem}

\begin{proof}
Let $r\in\{0,\dots,k\}$. For each integer $r\ge 0$, let
\begin{equation*}
\mathcal I_r:=\{1,\dots,d\}^r,
\end{equation*}
where $d=\dim\mathfrak g$. For $r=0$, we use the convention $\mathcal I_0:=\{\emptyset\}.$
If $\beta=(\beta_1,\dots,\beta_r)\in\mathcal I_r$, we write
\begin{equation*}
X^\beta:=X_{\beta_1}\cdots X_{\beta_r}\in U(\mathfrak g),
\end{equation*}
and for $\beta=\emptyset$ we set $X^\emptyset:=1.$
For $v\in H^\infty$, define the vector of all derivatives of order $r$ by
\begin{equation*}
\mathbf D_r(v)
:=
\bigl(d\pi(X^\beta)v\bigr)_{\beta\in\mathcal I_r}
\in H^{\mathcal I_r}.
\end{equation*}
We equip $H^{\mathcal I_r}$ with the product Hilbert norm
\begin{equation*}
\|\mathbf w\|_{H^{\mathcal I_r}}^2
:=
\sum_{\beta\in\mathcal I_r}\|w_\beta\|_H^2,
\qquad
\mathbf w=(w_\beta)_{\beta\in\mathcal I_r}\in H^{\mathcal I_r}.
\end{equation*}
For $g\in G$, let
\begin{equation*}
\Pi_r(g):H^{\mathcal I_r}\longrightarrow H^{\mathcal I_r}
\end{equation*}
be the diagonal action defined by
\begin{equation*}
\bigl(\Pi_r(g)\mathbf w\bigr)_\beta
:=
\pi(g)w_\beta,
\qquad
\beta\in\mathcal I_r.
\end{equation*}
Since $\pi(g)$ is unitary on $H$, the operator $\Pi_r(g)$ is unitary on
$H^{\mathcal I_r}$.

Since $v\in H^\infty$, differentiation may be passed under the integral, and thus
\begin{equation*}
d\pi(X^\beta) I(t,v,\varphi)
=
\int_K \varphi(u)\, d\pi(X^\beta)\pi(u a_t)v\,du.
\end{equation*}
Using
\begin{equation*}
d\pi(D)\pi(g)=\pi(g)\,d\pi(\mathrm{Ad}(g^{-1})D),
\qquad D\in U(\mathfrak g),\ g\in G,
\end{equation*}
we obtain
\begin{equation*}
d\pi(X^\beta) I(t,v,\varphi)
=
\int_K \varphi(u)\,\pi(u a_t)\,
d\pi\!\bigl(\mathrm{Ad}((u a_t)^{-1})X^\beta\bigr)v\,du.
\end{equation*}
Now
\begin{equation*}
\mathrm{Ad}((u a_t)^{-1})=\mathrm{Ad}(a_t^{-1})\,\mathrm{Ad}(u^{-1}).
\end{equation*}
Since $\mathrm{Ad}(u^{-1})$ is orthogonal on $\mathfrak g$ with respect to the fixed inner
product, it has operator norm $1$. On the other hand, by the root relations in the
rank-one case,
\begin{equation*}
\mathrm{Ad}(a_t^{-1})n_i=e^{-t}n_i,\qquad
\mathrm{Ad}(a_t^{-1})\theta n_i=e^{t}\theta n_i,\qquad
\mathrm{Ad}(a_t^{-1})m_j=m_j,\qquad
\mathrm{Ad}(a_t^{-1})a_1=a_1,
\end{equation*}
so $\|\mathrm{Ad}(a_t^{-1})\|_{\mathrm{End}(\mathfrak g)}\le e^t.$
Hence $\|\mathrm{Ad}((u a_t)^{-1})\|_{\mathrm{End}(\mathfrak g)}\le e^t.$
Here $\|\cdot\|_{\mathrm{End}(\mathfrak g)}$ denotes the operator norm on
$\mathrm{End}(\mathfrak g)$ induced by the fixed inner product on $\mathfrak g$.

Let $E_r$ be the span of all words of length $r$ in the fixed basis. Then
$\mathrm{Ad}((u a_t)^{-1})$ preserves $E_r$, and the induced operator on $E_r$ has norm at most
$e^{rt}$. Therefore, for every word $\beta$ of length $r$,
\begin{equation*}
\mathrm{Ad}((u a_t)^{-1})X^\beta
=
\sum_{|\gamma|=r} c_{\beta\gamma}(u,t)\,X^\gamma,
\end{equation*}
where the coefficient matrix $C_r(u,t)=\bigl(c_{\beta\gamma}(u,t)\bigr)$ satisfies
\begin{equation*}
\|C_r(u,t)\|_{\ell^2\to\ell^2}\le e^{rt}.
\end{equation*}
Here $\|\cdot\|_{\ell^2\to\ell^2}$ denotes the operator norm on the finite-dimensional coefficient space indexed by words of length $r$.

Collecting all words of length $r$, we get
\begin{equation*}
\mathbf D_r\bigl(I(t,v,\varphi)\bigr)
=
\int_K \varphi(u)\,\Pi_r(u a_t)\,C_r(u,t)\,\mathbf D_r(v)\,du,
\end{equation*}
where $\Pi_r(u a_t)$ denotes the diagonal action of $\pi(u a_t)$ on the finite product
space $H^{\mathcal I_r}$. Since $\pi(u a_t)$ is unitary, so is $\Pi_r(u a_t)$, and hence
\begin{align*}
\bigl\|\mathbf D_r(I(t,v,\varphi))\bigr\|_H
&\le
\int_K |\varphi(u)|\,
\bigl\|\Pi_r(u a_t)\,C_r(u,t)\,\mathbf D_r(v)\bigr\|_H\,du \\
&=
\int_K |\varphi(u)|\,
\bigl\|C_r(u,t)\,\mathbf D_r(v)\bigr\|_H\,du \\
&\le
e^{rt}\,\|\varphi\|_{L^1(K)}\,\|\mathbf D_r(v)\|_H.
\end{align*}
Since $r\le k$, this gives
\begin{equation*}
\bigl\|\mathbf D_r(I(t,v,\varphi))\bigr\|_H
\le
e^{kt}\,\|\varphi\|_{L^1(K)}\,\|\mathbf D_r(v)\|_H.
\end{equation*}

Summing over $r=0,\dots,k$, and using Definition~\ref{def:sobolevnorm_all_derivatives} of the Sobolev norm, we conclude
\begin{equation*}
\|I(t,v,\varphi)\|_{W_{k,H}}
\le
e^{kt}\,\|\varphi\|_{L^1(K)}\,\|v\|_{W_{k,H}}
\end{equation*}
for the Sobolev norm defined using the fixed basis. Since all Sobolev norms coming from
Definition~\ref{def:sobolevnorm_all_derivatives} are equivalent, the stated estimate follows
with a constant $C_k>0$ depending only on the Sobolev norm convention.

Finally, for fixed $t\ge 0$ and $\varphi\in C^\infty(K)$, the above estimate shows that
$v\mapsto I(t,v,\varphi)$ is bounded on the dense subspace $H^\infty\subset W_{k}(H)$,
hence extends uniquely to a bounded operator on $W_{k}(H)$.
\end{proof}

\begin{lem}[Sobolev estimate for $G_{v,\varphi}(t)$]
\label{lem:sobolev_bound_G}
Let $(\pi,H)$ be an irreducible unitary representation of $G=\mathrm{SO}(n,1)^{\circ}$,
and let $k\ge 0$. Then there exists a constant $C_k>0$ such that for all $t\ge 0$,
all $v\in H^\infty$, and all $\varphi\in C^\infty(K)$,
\begin{equation*}
\|G_{v,\varphi}(t)\|_{W_{k,H}}
\le
C_k\, e^{kt}\,
\|v\|_{W_{k+2,H}}\,
\|\varphi\|_{W_{1,1}}.
\end{equation*}
\end{lem}

\begin{proof}
$G_{v,\varphi}(t)$ is a finite linear combination of terms 
$I\bigl(t, d\pi(n_j)v, \mathfrak{K}_{k_j}\varphi\bigr),\ I\bigl(t, d\pi(n_j)^2 v, \varphi\bigr),$
with coefficients bounded uniformly in $t\ge 0$.
Thus it suffices to estimate the Sobolev norm of each such term.
By Lemma~\ref{lem:sobolev_boundedness_of_I_explicit}, 
For the first type of term $I\bigl(t, d\pi(n_j)v, \mathfrak{K}_{k_j}\varphi\bigr),$
we obtain
\begin{equation*}
\|I(t, d\pi(n_j)v, \mathfrak{K}_{k_j}\varphi)\|_{W_{k,H}}
\le
C_k e^{kt}\,
\|\mathfrak{K}_{k_j}\varphi\|_{L^1(K)}\,
\|d\pi(n_j)v\|_{W_{k,H}}.
\end{equation*}
For the second type $I\bigl(t, d\pi(n_j)^2 v, \varphi\bigr),$
we similarly have
\begin{equation*}
\|I(t, d\pi(n_j)^2 v, \varphi)\|_{W_{k,H}}
\le
C_k e^{kt}\,
\|\varphi\|_{L^1(K)}\,
\|d\pi(n_j)^2 v\|_{W_{k,H}}.
\end{equation*}
Combining the above estimates and summing over $j=1,\dots,n-1$, we obtain
\begin{equation*}
\|G_{v,\varphi}(t)\|_{W_{k,H}}
\le
C_k e^{kt}
\|v\|_{W_{k+2,H}}
\|\varphi\|_{W_{1,1}}.
\end{equation*}
\end{proof}

\subsection{Solutions of the ODE}\label{section:solution_ODE}
Our method begins with the resolution of a general Cauchy problem, subject to initial conditions imposed at $t=0$, in order to obtain an initial expression for the integral $I(t,v,\varphi)$.  
We now solve explicitly the ODE satisfied by the sector average
$I(t,v,\varphi)$. 

Denote the discriminant of the inhomogeneous ODE in Proposition~\ref{prop:vector_ODE} by
\begin{equation}\label{equation:discriminant}
\mathcal D:=\frac{(n-1)^2}{4}-(\upvarpi-\mu).
\end{equation}
Correspondingly, the roots of the associated characteristic polynomial are defined by
\begin{equation*}
\mathcal{D}\neq 0:\ \ \lambda_{\pm}:=\frac{1-n}{2}\pm \sqrt{\mathcal D};
\qquad
\mathcal{D}=0:\ \ \lambda:=\frac{1-n}{2}.
\end{equation*}
\begin{prop}[Explicit solution formula for $I(t,v,\varphi)$]
\label{prop:explicit_solution_of_I}
Assume the same conditions of Proposition~\ref{prop:vector_ODE}. 
Set
\begin{equation}\label{eq:defi_of_kernel_function}
H_{\mathcal D}(s)
:=
\begin{cases}
e^{\lambda_-s}-\lambda_-K_{\mathcal D}(s),
& \mathcal D\neq 0,\\[0.4em]
(1+s)e^{\lambda s},
& \mathcal D=0.
\end{cases},\quad K_{\mathcal D}(s)
:=
\begin{cases}
\dfrac{e^{\lambda_+s}-e^{\lambda_-s}}{\lambda_+-\lambda_-},
& \mathcal D\neq 0,\\[1.2em]
\lambda s e^{\lambda s},
& \mathcal D=0,
\end{cases}
\qquad s\geq 0.
\end{equation}
Then
\begin{equation}
\label{eq:kernel_solution_all_D}
I(t,v,\varphi)
=
H_{\mathcal D}(t)I(0,v,\varphi)
+
K_{\mathcal D}(t)I(0,d\pi(a_1)v,\varphi)
+
2\int_0^t K_{\mathcal D}(t-r)e^{-r}G_{v,\varphi}(r)\,\mathrm{d}r .
\end{equation}
\end{prop}

 From expression~\eqref{eq:kernel_solution_all_D}, the exponential factors in the integral terms are of the form $e^{(-\lambda_{\pm}-1)r}$, where $\lambda_{\pm}=-\frac{n-1}{2}\pm \sqrt{\mathcal D}$.
In particular, when $n=2$, the subgroup $M$ is trivial, and hence the resulting ODE is considerably simpler and does not involve the more complicated discriminant $\mathcal{D}$. In this case, one has $-\lambda-1=-\frac{1}{2},$
so the integrand in~\eqref{eq:kernel_solution_all_D} possesses intrinsic exponential decay. For a more detailed discussion in the case of $SL(2,\mathbb{R})$, we refer the reader to Corso and Ravotti~\cite{Corso2022LargeHC}.

By contrast, for $n>2$, the real part satisfies $\Re(-\lambda_{+}-1)=\Re(\frac{n-3}{2}+\sqrt{\mathcal{D}})\geq 0$. Hence no decay is present at the level of the integrand, and this necessitates an iterative procedure, as developed in the following Lemma~\ref{cor:uniform_ODE_bootstrap_sobolev_growth} and Section~\ref{Section:Asymp_Irreducible_repnt_with_pure_imginary}, in order to progressively upgrade the decay rate. 

We introduce the following important upper bounds of $H_{\mathcal{D}}(s)$ and $K_{\mathcal{D}}(s)$, which we will use multiplied times in later discussions.

\begin{lem}\label{lem:upper_bounds_of_H_and_k} For any $\mathcal{D} \in \mathbb{R}$, we have the following uniform upper bounds of $H_{\mathcal{D}}(s)$ and $K_{\mathcal{D}}(s)$:
\begin{enumerate}
    \item For $s\geq 0$, we have
       \begin{equation*}
           H_{\mathcal{D}}(s) \ll 
            \begin{cases}
(1+s)e^{(\lambda + \frac{1}{4})s},
& 0 \leq \Re(\sqrt{\mathcal{D}})< \frac{1}{4},\\[0.4em]
e^{\Re(\lambda_+)s},
& \Re(\sqrt{\mathcal{D}})\geq \frac{1}{4},
\end{cases}
   \end{equation*}
   and
         \begin{equation*}
           K_{\mathcal{D}}(s) \ll 
            \begin{cases}
(1+s)e^{(\lambda + \frac{1}{4})s},
& 0 \leq \Re(\sqrt{\mathcal{D}})< \frac{1}{4},\\[0.4em]
e^{\Re(\lambda_+)s},
& \Re(\sqrt{\mathcal{D}})\geq \frac{1}{4},
\end{cases}
   \end{equation*}
     \item For $s<0$, we have
     \begin{equation*}
           K_{\mathcal{D}}(s) \ll 
            \begin{cases}
(1-s)e^{(\lambda - \frac{1}{4})s},
& 0 \leq \Re(\sqrt{\mathcal{D}})< \frac{1}{4},\\[0.4em]
e^{\Re(\lambda_-)s},
& \Re(\sqrt{\mathcal{D}})\geq \frac{1}{4}.
\end{cases}
   \end{equation*}
\end{enumerate}
\end{lem}
\begin{proof}
We begin with $s>0$. For $\sqrt{\mathcal{D}} \in \mathrm{i}\mathbb{R}_{> 0}$,
\begin{equation*}
    K_{\mathcal D}(s) =e^{\lambda s}\, \frac{\sin(\sqrt{D}s)}{\sqrt{D}} \ll (1+s)e^{\lambda s}.
\end{equation*}
For $ 0<\sqrt{\mathcal{D}}< \frac{1}{4}$,
\begin{equation*}
    K_{\mathcal D}(s) =e^{\lambda s}\, \frac{e^{\sqrt{D}s} - e^{-\sqrt{D}s}}{2\sqrt{D}} \ll e^{(\lambda + \frac{1}{4})s}.
\end{equation*}
For $\sqrt{\mathcal{D}}\geq \frac{1}{4}$,
\begin{equation*}
    K_{\mathcal D}(s) =e^{\lambda s}\, \frac{e^{\sqrt{D}s} - e^{-\sqrt{D}s}}{2\sqrt{D}} \ll e^{\lambda_+s}.
\end{equation*}
Then we obtain
\begin{equation*}
K_{\mathcal D}(s)
\ll \begin{cases}
(1+s)e^{\lambda s},
& \sqrt{\mathcal{D}} \in \mathrm{i}\mathbb{R}_{\geq 0},\\[0.4em]
e^{(\lambda + \frac{1}{4})s},
& 0<\sqrt{\mathcal{D}}< \frac{1}{4},\\[0.4em]
e^{\lambda_+s},
& \sqrt{\mathcal{D}}\geq \frac{1}{4},
\end{cases}
\quad
H_{\mathcal D}(s)
\ll \begin{cases}
(1+s)e^{\lambda s},
& \sqrt{\mathcal{D}} \in \mathrm{i}\mathbb{R}_{\geq 0},\\[0.4em]
e^{(\lambda + \frac{1}{4})s},
& 0<\sqrt{\mathcal{D}}< \frac{1}{4},\\[0.4em]
e^{\lambda_+s},
& \sqrt{\mathcal{D}}\geq \frac{1}{4}.
\end{cases}
\end{equation*}
For any $s<0$,
    \begin{equation*}
K_{\mathcal D}(s)
\ll \begin{cases}
(1-s)e^{\lambda s},
& \sqrt{\mathcal{D}} \in \mathrm{i}\mathbb{R}_{\geq 0},\\[0.4em]
e^{(\lambda - \frac{1}{4})s},
& 0<\sqrt{\mathcal{D}}< \frac{1}{4},\\[0.4em]
e^{\lambda_- s},
& \sqrt{\mathcal{D}}\geq \frac{1}{4}.
\end{cases}
\end{equation*}
\end{proof}

\begin{lem}[Uniform ODE bootstrap improvement of Sobolev growth]
\label{cor:uniform_ODE_bootstrap_sobolev_growth}
Let $(\pi,H)$ be a unitary representation of
$G=\mathrm{SO}(n,1)^\circ$.
Let $v$ be a smooth joint eigenvector and $\varphi\in C^\infty(K)$.
Fix an integer $k\geq 0$, and for every $q\in\mathbb Z_{\geq 0}$ define
\begin{equation*}
\alpha_q:=\max\{\Re(\lambda_+), \lambda + \tfrac{1}{4} ,k-q\},~~ \text{and}~~ q_0:=k+\left\lfloor \tfrac{n-1}{2} \right\rfloor + 2,
\end{equation*}
where $\left\lfloor x \right\rfloor$ denotes the integer part of $x$.
Then for any fixed integer $q \geq q_0$, we have
\begin{equation}
\label{eq:uniform_ODE_bootstrap_estimate}
\|I(t,v,\varphi)\|_{W_{k,H}}
\ll_k 
(1+t)\,e^{\alpha_q t}
\|v\|_{W_{k+2q,H}}
\|\varphi\|_{W_{q,1}},
\qquad t\geq 0.
\end{equation}
Especially for $q=q_0$, we have
\begin{equation}
\|G_{v,\varphi}(t)\|_{W_{k,H}}
\ll_k (1+t)\,\max\{e^{\Re(\lambda_+)t}, e^{(\lambda + \frac{1}{4})t}\}\, \|v\|_{W_{k+2q_0+2,H}}\, \|\varphi\|_{W_{q_0+1,1}}.
\end{equation}

\end{lem}

\begin{proof}
We now prove \eqref{eq:uniform_ODE_bootstrap_estimate} by induction on $q$.
For $q=0$, Lemma~\ref{lem:sobolev_boundedness_of_I_explicit} gives
\begin{equation*}
\|I(t,v,\varphi)\|_{W_{k,H}}
\ll_k 
e^{kt}\|\varphi\|_{L^1(K)}\|v\|_{W_{k,H}}.
\end{equation*}
Since $k\leq \alpha_0$,
the claim follows for $q=0$.

Assume that the estimate has been proved for some $q\geq 0$. Thus every average
$I(t,v,\varphi)$ generated at this stage satisfies
\begin{equation}
\label{eq:uniform_induction_hypothesis_all_D}
\|I(t,v,\varphi)\|_{W_{k,H}}
\ll_k 
\begin{cases}
(1+t)\,e^{\alpha_q t}
\|v\|_{W_{k+2q,H}}
\|\varphi\|_{W_{q,1}},
& 0 \leq \Re(\sqrt{\mathcal{D}})< \frac{1}{4},\\[0.4em]
e^{\alpha_q t}
\|v\|_{W_{k+2q,H}}
\|\varphi\|_{W_{q,1}},
& \Re(\sqrt{\mathcal{D}})\geq \frac{1}{4}.
\end{cases}
\end{equation}
Using~\eqref{equataion: G_v}
and applying \eqref{eq:uniform_induction_hypothesis_all_D} to the averages on
the right-hand side, we obtain
\begin{equation}
\label{eq:uniform_G_bootstrap_all_D}
\|G_{v,\varphi}(t)\|_{W_{k,H}}
\ll_k 
\begin{cases}
(1+t)\,e^{\alpha_q t}
\|v\|_{W_{k+2q+2,H}}
\|\varphi\|_{W_{q+1,1}},
& 0 \leq \Re(\sqrt{\mathcal{D}})< \frac{1}{4},\\[0.4em]
e^{\alpha_q t}
\|v\|_{W_{k+2q+2,H}}
\|\varphi\|_{W_{q+1,1}},
& \Re(\sqrt{\mathcal{D}})\geq \frac{1}{4}.
\end{cases}
\end{equation}

By the
$t=0$ case of Lemma~\ref{lem:sobolev_boundedness_of_I_explicit} and Lemma~\ref{lem:upper_bounds_of_H_and_k}:
\begin{align}
\label{eq:homogeneous_kernel_bound_all_D}
&\quad \left\| H_{\mathcal D}(t)I(0,v,\varphi)
+
K_{\mathcal D}(t)I(0,d\pi(a_1)v,\varphi)
\right\|_{W_{k,H}} \notag\\
\ll_k &
\begin{cases}
(1+t)\,\max\{e^{\Re(\lambda_+)t}, e^{(\lambda + \frac{1}{4})t}\}\,
\|v\|_{W_{k+2q+2,H}}
\|\varphi\|_{W_{q+1,1}},
& 0 \leq \Re(\sqrt{\mathcal{D}})< \frac{1}{4},\\[0.4em]
e^{\lambda_+ t}\,
\|v\|_{W_{k+2q+2,H}}
\|\varphi\|_{W_{q+1,1}},
& \Re(\sqrt{\mathcal{D}})\geq \frac{1}{4}.
\end{cases}
\end{align}
Meanwhile, Lemma~\ref{lem:upper_bounds_of_H_and_k} and estimate
\eqref{eq:uniform_G_bootstrap_all_D} give
\begin{equation}\label{eq:forced_kernel_bound_all_D}
    \left\|
\int_0^t K_{\mathcal D}(t-r)e^{-r}G_{v,\varphi}(r)\,\mathrm{d}r
\right\|_{W_{k,H}}
\ll_k 
(1+t)\,
e^{\max\{\Re(\lambda_+), \lambda + \frac{1}{4}, \alpha_q-1\}t}
\|v\|_{W_{k+2q+2,H}}
\|\varphi\|_{W_{q+1,1}}.
\end{equation}
We claim that the implied constant is only dependent on $k$, while is independent of $\mathcal{D}$. 

By Lemma~\ref{lem:upper_bounds_of_H_and_k}, the only non-trivial case is $\sqrt{\mathcal{D}}\geq \frac{1}{4}$ and $\|G_{v,\varphi}(t)\|_{W_{k,H}}
\ll_{k,v,\varphi}
e^{(k-q) t}$. In this case, we have
\begin{equation*}
    \left\|
\int_0^t K_{\mathcal D}(t-r)e^{-r}G_{v,\varphi}(r)\,\mathrm{d}r
\right\|_{W_{k,H}}
\ll_k 
e^{\lambda_+ t}\int_0^t e^{(k-q-1-\lambda_+)r}\,\mathrm{d}r\,
\|v\|_{W_{k+2q+2,H}}
\|\varphi\|_{W_{q+1,1}}.
\end{equation*}
For any $q$ satisfying $|k-q-1-\lambda_+|\geq 1$, we have
\begin{equation*}
  e^{\lambda_+ t}\int_0^t e^{(k-q-1-\lambda_+)r}\,\mathrm{d}r
\ll(e^{\lambda_+ t} + e^{(k-q-1)t})\ll
e^{\alpha_{q+1} t}.
\end{equation*}
Then we obtain
\begin{equation*}
    \left\|
\int_0^t K_{\mathcal D}(t-r)e^{-r}G_{v,\varphi}(r)\,\mathrm{d}r
\right\|_{W_{k,H}}
\ll_k 
e^{\alpha_{q+1} t}\,
\|v\|_{W_{k+2q+2,H}}
\|\varphi\|_{W_{q+1,1}}.
\end{equation*}
Now combining \eqref{eq:homogeneous_kernel_bound_all_D} and
\eqref{eq:forced_kernel_bound_all_D}, we obtain
\begin{equation*}
\|I(t,v,\varphi)\|_{W_{k,H}}
\ll_k
e^{\alpha_{q+1}t}
\|v\|_{W_{k+2q+2,H}}
\|\varphi\|_{W_{q+1,1}}.
\end{equation*}
Thus \eqref{eq:uniform_induction_hypothesis_all_D} holds with $q$ replaced by
$q+1$.

For some $q>0$ satisfies $k-\lambda_+-2<q<k-\lambda_+$, we have $|k-q-1-\lambda_+|< 1$. Then we obtain
\begin{equation*}
  e^{\lambda_+ t}\int_0^t e^{(k-q-1-\lambda_+)r}\,\mathrm{d}r
< e^{\lambda_+ t}\int_0^t e^{r}\,\mathrm{d}r
\ll e^{(\lambda_+ +1) t}.
\end{equation*}
Now we obtain
\begin{equation}\label{eq:need_to_iterate_again}
    \left\|
\int_0^t K_{\mathcal D}(t-r)e^{-r}G_{v,\varphi}(r)\,\mathrm{d}r
\right\|_{W_{k,H}}
\ll_k 
e^{(\lambda_+ +1) t}\,
\|v\|_{W_{k+2q+2,H}}
\|\varphi\|_{W_{q+1,1}},
\end{equation}
which does not satisfy the induction assumption. 

In this situation, we need to use the iteration scheme again. By~\eqref{eq:need_to_iterate_again}, we have
\begin{equation*}
\|I(t,v,\varphi)\|_{W_{k,H}}
\ll_k 
e^{(\lambda_+ +1) t}\,
\|v\|_{W_{k+2q+2,H}}
\|\varphi\|_{W_{q+1,1}}.
\end{equation*}
Using~\eqref{equataion: G_v}, we obtain
\begin{equation*}
\|G_{v,\varphi}(t)\|_{W_{k,H}}
\ll_k 
e^{(\lambda_+ +1) t}\,
\|v\|_{W_{k+2q+4,H}}
\|\varphi\|_{W_{q+2,1}}.
\end{equation*}
Then we have 
\begin{equation*}
\begin{aligned}
        \left\|
\int_0^t K_{\mathcal D}(t-r)e^{-r}G_{v,\varphi}(r)\,\mathrm{d}r
\right\|_{W_{k,H}}
&\ll_k 
 e^{\lambda_+ t}\int_0^t 1\,\mathrm{d}r\,
\|v\|_{W_{k+2q+4,H}}
\|\varphi\|_{W_{q+2,1}}\\
&\ll_k 
 (1+t)e^{\lambda_+ t}\,
\|v\|_{W_{k+2q+4,H}}
\|\varphi\|_{W_{q+2,1}}.
\end{aligned}
\end{equation*}
Hence we know that the implied constant in~\eqref{eq:forced_kernel_bound_all_D} is independent of $\mathcal{D}$.

Combining \eqref{eq:homogeneous_kernel_bound_all_D} and
\eqref{eq:forced_kernel_bound_all_D}, we obtain
\begin{equation*}
\|I(t,v,\varphi)\|_{W_{k,H}}
\ll_k
(1+t)\,e^{\alpha_{q+2}t}
\|v\|_{W_{k+2q+4,H}}
\|\varphi\|_{W_{q+2,1}}.
\end{equation*}
Thus \eqref{eq:uniform_ODE_bootstrap_estimate} holds with $q$ replaced by
$q+2$.
For any $q\geq q_0$, we have $q>k-\Re(\lambda_+) +1$. Then by induction, we know that \eqref{eq:uniform_ODE_bootstrap_estimate} holds for any $q\geq q_0$.
\end{proof}

From now on, due to the Sobolev embedding Theorem~\ref{Prop:Sobolev-inequlity}, we fix an integer $k> \frac{n(n+1)}{4}$.
 
\section{\texorpdfstring{Asymptotic Expression of $I(t, v, \varphi)$ for Irreducible Representations }{Asymptotic Expression of I(t, v, phi) for Irreducible Representations}}\label{Section:Asymp_Irreducible_repnt_with_pure_imginary}
In this section, we consider a joint engenvector $v$ satisfies
\begin{equation*}
d\pi(\Omega_{G})v=\mu v,
\qquad
d\pi(\Omega_{M})v=\upvarpi v.
\end{equation*}
Then we develop an iteration scheme to obtain the asymptotic expansion of $I(t,v,\varphi)$ for a untiary representation $(\pi, H)$ with fixed spectral data $\mathcal{D}$ (see~\eqref{equation:discriminant}) and prove that this asymptotic expansion is uniform bounded with respect to $\mathcal{D}$.

The key observation is that the function $I(t,v,\varphi)$ appears in the explicit expression~\eqref{equataion: G_v} of $G_{v,\varphi}$ recursively. By repeatedly substituting this expression into the solution~\eqref{eq:kernel_solution_all_D}, we derive the asymptotic expansion of $I(t,v,\varphi)$. The another main task is to ensure that this expansion remains uniform with respect to $\mathcal{D}$.

We introduce the iteration strategy and discuss the different cases of the spectral parameter $\mathcal D$ in Section~\ref{subsection:Construction_of_the_iteration}.
Sections~\ref{section:Once_decomposition} and~\ref{section:twice_decomposition} develop the iteration scheme for any fixed $\mathcal D$. We begin with presenting the first iteration step in order to illustrate the main idea of the scheme, and then derive the general recursive formula. Then we obtain the asymptotic expansion of $I(t , v, \varphi)$. Afterwards based on an integral computation lemma, we provide estimates for both the main term and the remainder term.
In Section~\ref{subsection:Third_real_part}, we address the convergence issue arising when $\mathcal D$ is sufficiently large by incorporating additional input from the Matrix Coefficient Theorem~\ref{thm:matrix-coeff-decay-eps-form}. This allows us to derive a refined iteration scheme in the large $\mathcal D$ regime.
Finally, in Section~\ref{section:asymptotic_expansion}, we combine and reorganize the results obtained in the previous sections in order to derive the final asymptotic expansion for any fixed $\mathcal{D}$. And we show that the coefficients are uniform bounded over all $\mathcal{D}$.
\subsection{Construction of the Iteration Scheme}\label{subsection:Construction_of_the_iteration}
The following part is a sketch of the iteration scheme. Recall the definition~\eqref{eq:defi_of_kernel_function} of $K_{\mathcal D}(s)$ and define the integral operators for any bounded function $\Psi: \mathbb{R}\rightarrow H$,
\begin{equation}\label{equation:integraloperator_J}
        \mathcal{J}_{2}(\Psi)(t) := 2\sqrt{2}\int_{0}^{t}K_{\mathcal D}(t-r)\,e^{-r} \, \Psi(r)\, \mathrm{d}r,\qquad \mathcal{J}_{3}(\Psi)(t) := 2\int_{0}^{t}K_{\mathcal D}(t-r)\,e^{-2r} \, \Psi(r)\, \mathrm{d}r.
\end{equation}
Recall the expression~\eqref{eq:kernel_solution_all_D} and denote
\begin{equation}\label{equation:definition_psi0_R_0}
\psi_0 (t, v, \varphi)
:=
 H_{\mathcal D}(t)I(0,v,\varphi)
+
K_{\mathcal D}(t)I(0,d\pi(a_1)v,\varphi),\quad \mathcal{R}_0(t,v,\varphi):=\frac{1}{\sqrt{2}}\mathcal{J}_{2}\left( G_{v,\varphi}\right)(t).
\end{equation}
Then the average $I(t, v, \varphi)$ can be rewritten as 
\begin{equation}\label{eq:I_initial_formula}
    I(t, v, \varphi) = \psi_0 (t, v, \varphi) +  \mathcal{R}_0(t,v,\varphi).
\end{equation}
Plugging in the formula~\eqref{equataion: G_v} into $\mathcal{R}_0(t,v,\varphi)$, we obtain:
\begin{equation}\label{equation:recursive_I}
    I(t, v, \varphi) = \psi_0 (t, v, \varphi) - \sum\limits_{s=2}^{n} \left(\mathcal{J}_2 \left[I(t, \mathrm{d}\pi(n_{s-1})v, \mathfrak{K}_{k_{s-1}}\varphi)\right] + \mathcal{J}_{3}\left[I(t, \mathrm{d}\pi(n_{s-1})^{2}v, \varphi)\right] \right).
\end{equation}

Using this recursive formula, we can start the iterative procedure. A key point is to ensure that $I(t, \mathrm{d}\pi(n_{s-1})v, \mathfrak{K}_{k_{s-1}}\varphi)$ also satisfies the corresponding ODE system. This is guaranteed by Lemma~\ref{lem:OmegaM_stable_under_n}.

By Lemma~\ref{cor:uniform_ODE_bootstrap_sobolev_growth}, we have $\|G_{v,\varphi}(t)\|_{W_{k,H}} \ll_{k,v,\varphi} (1+t)\,\max\{e^{\Re(\lambda_+)t}, e^{(\lambda + \frac{1}{4})t}\}$, Then we have
\begin{equation*}
    \|\mathcal{J}_{2}\left( G_{v,\varphi}\right)(t)\|_{W_{k,H}} \ll_{k,v,\varphi} (1+t)\,\max\{e^{\Re(\lambda_+)t}, e^{(\lambda + \frac{1}{4})t}\}.
\end{equation*}
One can notice that the improvement of such trivial iteration scheme is limited by $\max\{e^{\Re(\lambda_+)t}, e^{(\lambda + \frac{1}{4})t}\}$. 
Since during this whole paper, $k$ is a fixed integer, we will omit the dependence with $k$ of the implied constant in the later proof. 

We now introduce a more refined iteration scheme. The main idea is to decompose the integral into a limiting part over $[0,\infty)$ and a tail part over $[t,\infty)$. For a suitable function $\Psi$, whenever the integral is convergent, we write
\begin{equation}\label{equation:J_2decomposition}
\mathcal{J}_{2}(\Psi)(t)
=
2\sqrt{2}\int_{0}^{\infty} K_{\mathcal D}(t-r)\,e^{-r}\,\Psi(r)\,\mathrm{d}r
-
2\sqrt{2}\int_{t}^{\infty} K_{\mathcal D}(t-r)\,e^{-r}\,\Psi(r)\,\mathrm{d}r.
\end{equation}
Roughly speaking, the first term is of order $O\bigl(e^{\Re(\lambda_{+})t}\bigr),$
while the second term is of order $O\bigl(e^{\Re(\lambda_{+}-1)t}\bigr).$
Therefore the second term contains an additional exponential decay, which yields improved estimates after each iteration step.

In order to perform the splitting, we must ensure that the integrals of the form $\int_{0}^\infty$ converge. This requires precise control of the kernel $K_{\mathcal D}(s)$ for both positive and negative $s$, which is discussed in Lemma~\ref{lem:upper_bounds_of_H_and_k}.

 Therefore, we split the value of $\sqrt{\mathcal{D}}$ into the following two cases:
\begin{align*}
    \textbf{Case a:}\,\sqrt{\mathcal{D}} \in \mathrm{i}\mathbb{R}_{>0} \cup [0,\tfrac{1}{4}), \quad \textbf{Case b:}\,\sqrt{\mathcal{D}} \in [\tfrac{1}{4}, \infty).
\end{align*}
Recall Proposition~\ref{prop:explicit_solution_of_I}; it provides the reason for the distinction between the following two cases.
\begin{enumerate}
    \item \textbf{Case a:} $\sqrt{\mathcal{D}} \in i\mathbb{R}_{>0} \cup [0,\tfrac{1}{4})$, \textbf{Single decomposition.}
In this case, by Lemma~\ref{lem:upper_bounds_of_H_and_k} and~\ref{cor:uniform_ODE_bootstrap_sobolev_growth}, we have
\begin{equation}\label{eq:sufficient_upper_bound_KD_G}
  \|K_{\mathcal{D}}(t-r)\,e^{-r}\,G_{v,\varphi}(r)\|_{W_{k,H}} \ll_{v,\varphi}  e^{(\frac{1-n}{2} + \frac{1}{4})t}\,(1+r-t)(1+r)\,e^{-\frac{1}{2}r}, ~\text{as}~ r \to \infty.
\end{equation}
Hence in this case, the decomposition in~\eqref{equation:J_2decomposition} with the kernel function $K_{\mathcal{D}}(t)$ is justified. 

\item \textbf{Case b:}$\,\sqrt{\mathcal{D}} \in [\tfrac{1}{4}, \infty)$, \textbf{Double decomposition.} In this case, the decomposition~\eqref{equation:J_2decomposition} is no longer applicable with the kernel function $K_{\mathcal{D}}(t)$ which is integral with $e^{\lambda_{+}t}$ and $e^{\lambda_{-}t}$ together. Moreover, the decompositions of integrals with $e^{\lambda_{+}t}$ and $e^{\lambda_{-}t}$ must be justified separately. The separated integrals will be defined in~\eqref{Formula:Naive_integral_operators} and their estimate will be discussed later in~\eqref{eq:estimation_of_J-_act_G}.
\end{enumerate}

We will proceed the iteration scheme following these different cases of decompositions. 
To facilitate the notations in the iteration scheme, we introduce the following multi-index notations, which will be employed throughout the entire Section~\ref{Section:Asymp_Irreducible_repnt_with_pure_imginary}.
Let $\ell$ denote the number of iterations. During the $\ell$-fold iteration, the sequential summation 
is denoted by \begin{equation*}
    \sum_{s_{1}=2}^{n} \sum_{s_{2}=2}^{n} \cdots \sum_{s_{\ell}=2}^{n}=\sum_{\vec{s} \in S_{\ell}},\ \text{where}\ S_{\ell} := \left\{ (s_{1}, s_{2}, \dots, s_{\ell}) \mid 2 \leq s_{k} \leq n,\ 1 \leq k \leq \ell \right\}.
\end{equation*}

Let 
$ \vec{\mathscr{A}}_{\ell} = (\mathscr{A}_{1}, \mathscr{A}_{2}, \dots, \mathscr{A}_{\ell}) \in \{2,3\}^{\ell}$
be a sequence of selection indices.  
Denote operator maps
\begin{equation*}
U_v(\mathscr{A}_{k}, s_{k}) :=
\begin{cases}
\mathrm{d}\pi(n_{s_{k}-1}), & \text{if } \mathscr{A}_{k} = 2, \\
\mathrm{d}\pi(n_{s_{k}-1})^2, & \text{if } \mathscr{A}_{k} = 3,
\end{cases}
\qquad 
U_{\varphi}(\mathscr{A}_{k}, s_{k}) :=
\begin{cases}
\mathfrak{K}_{k_{s_{k}-1}}, & \text{if } \mathscr{A}_{k} = 2, \\
I, & \text{if } \mathscr{A}_{k} = 3,
\end{cases}
\end{equation*}
where $I$ denotes the identity operator.  
We define the iterated factors
\begin{equation*}
v_{\vec{\mathscr{A}}_{\ell}}^{\vec{s}} 
   \;=\; U_v(\mathscr{A}_{\ell}, s_{\ell}) \cdots U_v(\mathscr{A}_{1}, s_{1})\, v, \qquad
\varphi_{\vec{\mathscr{A}}_{\ell}}^{\vec{s}} 
   \;=\; U_{\varphi}(\mathscr{A}_{\ell}, s_{\ell}) \cdots U_{\varphi}(\mathscr{A}_{1}, s_{1})\, \varphi.
\end{equation*}

Recall the formula~\eqref{equataion: G_v} for $G_{v,\varphi}$. We introduce the shorthand
$  G^{\vec{s}}_{\vec{\mathscr{A}}_{\ell}}(t) 
  := G_{\,v^{\vec{s}}_{\vec{\mathscr{A}}_{\ell}},\, \varphi^{\vec{s}}_{\vec{\mathscr{A}}_{\ell}}}(t).$
Explicitly,
\begin{equation}\label{Equation:vector_G}
  G^{\vec{s}}_{\vec{\mathscr{A}}_{\ell}}(t) 
  = - \sum_{s=2}^{n} \Biggl(
     \sqrt{2}\,    I\!\left(t, \mathrm{d}\pi(n_{s-1})v^{\vec{s}}_{\vec{\mathscr{A}}_{\ell}}, \mathfrak{K}_{k_{s-1}}\varphi^{\vec{s}}_{\vec{\mathscr{A}}_{\ell}}\right)
     + e^{-t}  I\!\left(t, \mathrm{d}\pi(n_{s-1})^{2}v^{\vec{s}}_{\vec{\mathscr{A}}_{\ell}}, \varphi^{\vec{s}}_{\vec{\mathscr{A}}_{\ell}}\right)
     \Biggr).
\end{equation}

And we define the product with a fixed left-to-right order for any type of integral operators $\mathcal{J}_{\mathscr{A}_{m}}$:
\begin{equation*}
\prod_{m=1}^{\ell} \mathcal{J}_{\mathscr{A}_{m}} 
:=
\mathcal{J}_{\mathscr{A}_1}
\circ \cdots \circ
\mathcal{J}_{\mathscr{A}_{\ell}}.
\end{equation*}

\subsection{Single Decomposition}\label{section:Once_decomposition}
\textbf{Case a}:\,$\sqrt{\mathcal{D}} \in i\mathbb{R}_{>0} \cup [0,\tfrac{1}{4})$.
In this section, we use the integral operators $ \mathcal{J}_{i}$ defined in~\eqref{equation:integraloperator_J} in the iteration scheme, for $i \in \{2,3\}$, since we can directly use the single decomposition by the estimate in~\eqref{eq:sufficient_upper_bound_KD_G}. 

The integral operator $ \mathcal{J}_{i}$ can be decomposed into the following operators: 
\begin{equation}\label{equation:integral_operator_ring_tilde_J23}
\begin{aligned}
        \mathring{\mathcal{J}}_{2}(\Psi)(t) := 2\sqrt{2}\int_{0}^{\infty} K_{\mathcal D}(t-r)\, e^{-r} \,\Psi(r) \,\mathrm{d}r,\quad 
          \mathring{\mathcal{J}}_{3}(\Psi)(t) := 2\int_{0}^{\infty} K_{\mathcal D}(t-r)\, e^{-2r} \,\Psi(r) \,\mathrm{d}r,\\
\widetilde{\mathcal{J}}_{2}(\Psi)(t) := 2\sqrt{2}\int_{t}^{\infty} K_{\mathcal D}(t-r)\, e^{-r} \, \Psi(r) \,\mathrm{d}r,   \quad 
\widetilde{\mathcal{J}}_{3}(\Psi)(t) := 2\int_{t}^{\infty} K_{\mathcal D}(t-r)\, e^{-2r} \, \Psi(r) \,\mathrm{d}r,
\end{aligned}
\end{equation}
where $K_{\mathcal D}(s)$ is defined in~\eqref{eq:defi_of_kernel_function}. Denote
\begin{equation*}
    \mathring{\psi}_0(t, v, \varphi):= \frac{1}{\sqrt{2}}\mathring{\mathcal{J}}_{2}(G_{v,\varphi})(t),\qquad \widetilde{\mathcal{R}}_{0}(t,v,\varphi):=-\frac{1}{\sqrt{2}}\widetilde{\mathcal{J}}_{2}(G_{v,\varphi})(t).
\end{equation*}

Plugging the expression~\eqref{equataion: G_v} of $G_{v, \varphi}$ into the formula $\widetilde{\mathcal{R}}_{0}(t,v,\varphi)$ and getting
\begin{align*}
\widetilde{\mathcal{R}}_{0}(t,v,\varphi) = \sum_{s=2}^{n} \left(
\widetilde{\mathcal{J}}_{2}
\left(I(t, \mathrm{d}\pi(n_{s-1})v, \mathfrak{K}_{k_{s-1}}\varphi) \right)  +  
\widetilde{\mathcal{J}}_{3}
\left( I(t, \mathrm{d}\pi(n_{s-1})^{2}v, \varphi) \right)\right).
\end{align*}
Recall the expression~\eqref{equation:definition_psi0_R_0} of $\mathcal{R}_{0}$. We separate the limiting integral term $\mathring{\psi}_{0}$ from $\mathcal{R}_{0}(t,v,\varphi)$ and obtain
\begin{equation*}
    \mathcal{R}_{0}(t,v,\varphi) = \mathring{\psi}_0(t,v,\varphi) + \widetilde{\mathcal{R}}_{0}(t,v,\varphi).
\end{equation*}
 This decomposition precisely removes the fixed order component $e^{\Re(\lambda_+)t}$ from $\mathcal{R}_{0}(t,v,\varphi)$. 
Then we obtain a refined version of the recursive formula~\eqref{equation:recursive_I}:
\begin{equation*}
    I(t, v, \varphi) = \psi_0 (t, v, \varphi) +\mathring{\psi}_0(t, v, \varphi)+\widetilde{\mathcal{R}}_{0}(t,v,\varphi).
\end{equation*}
Define
\begin{align*}
    \widetilde{\psi}_1(t, v, \varphi) &:=  \sum_{s=2}^{n} 
\widetilde{\mathcal{J}}_{2}\left(\psi_0(t,\mathrm{d}\pi(n_{s-1})v, \mathfrak{K}_{k_{s-1}}\varphi)\right) +\sum_{s=2}^{n} 
\widetilde{\mathcal{J}}_{3}
\left(\psi_0(t,\mathrm{d}\pi(n_{s-1})^{2}v, \varphi)\right),\\
\mathring{\psi}_1(t, v, \varphi)&:=
\frac{1}{\sqrt{2}} \sum_{s=2}^{n} 
\widetilde{\mathcal{J}}_{2} \circ\mathring{\mathcal{J}}_{2}\left( G_{\mathrm{d}\pi(n_{s-1})v, \mathfrak{K}_{k_{s-1}}\varphi} \right)(t)  +\sum_{s=2}^{n} 
\widetilde{\mathcal{J}}_{3}  \circ\mathring{\mathcal{J}}_{2}
\left( G_{\mathrm{d}\pi(n_{s-1})^{2}v, \varphi} \right)(t) \notag ,\\
    \widetilde{\mathcal{R}}_{1}(t,v,\varphi) &:=  -\frac{1}{\sqrt{2}}\sum_{s=2}^{n} \widetilde{\mathcal{J}}_2\circ\widetilde{\mathcal{J}}_2
\left( G_{\mathrm{d}\pi(n_{s-1})v, \mathfrak{K}_{k_{s-1}}\varphi} \right)(t)-  \sum_{s=2}^{n} 
\widetilde{\mathcal{J}}_3 \circ
\widetilde{\mathcal{J}}_2
\left( G_{\mathrm{d}\pi(n_{s-1})^{2}v, \varphi} \right)(t)  \notag .
\end{align*}
Then we have the first step of the iteration:
\begin{equation*}
\begin{aligned}
    I(t, v, \varphi) &= \psi_0(t, v, \varphi) + \mathring{\psi}_0(t, v, \varphi) +\widetilde{\mathcal{R}}_{0}(t,v,\varphi) \\
    &= \psi_0(t, v, \varphi)  + \mathring{\psi}_0(t, v, \varphi)+ \widetilde{\psi}_1(t, v, \varphi)+\mathring{\psi}_1(t, v, \varphi)+\widetilde{\mathcal{R}}_{1}(t,v,\varphi).
\end{aligned}
\end{equation*}

To facilitate the notation in the iteration scheme, we define the following integral operators acting on bounded functions $\Psi: \mathbb{R} \to H$,
\begin{equation*}
    \widetilde{\mathfrak{J}}_{\vec{\mathscr{A}}_{\ell}}(\Psi)(t)
:=
\prod_{k=1}^{\ell} \widetilde{\mathcal{J}}_{\mathscr{A}_{k}}(\Psi) (t), \qquad
\mathring{\mathfrak{J}}_{\vec{\mathscr{A}}_{\ell}}(\Psi)(t)
:=
\prod_{k=1}^{\ell-1}  \widetilde{\mathcal{J}}_{\mathscr{A}_{k}} 
\left( \mathring{\mathcal{J}}_{\mathscr{A}_{\ell}} (\Psi)\right)(t).
\end{equation*}

Next, we use the notation for compositions of integral operators to introduce the general recursive formula for the $\ell$-th iteration step. The following definitions are well defined by Lemma~\ref{lem:computation_of_frak_J}.
\begin{defi}\label{Defi:limiting_contributions}
  Let $\ell \geq 0$. The limiting contribution $\mathring{\psi}_{\ell}(t, v, \varphi)$ is given by
\begin{equation*}
\mathring{\psi}_{\ell}(t, v, \varphi) =\frac{1}{\sqrt{2}}\sum_{\vec{s} \in S_{\ell}} \sum_{\vec{\mathscr{A}}_{\ell+1} \in \{2,3\}^{\ell} \times \{2\}} 
\mathring{\psi}^{\vec{s}}_{\vec{\mathscr{A}}_{\ell+1}}(t, v, \varphi),\ \text{where}\ \mathring{\psi}^{\vec{s}}_{\vec{\mathscr{A}}_{\ell+1}}(t, v, \varphi)
=\mathring{\mathfrak{J}}_{\vec{\mathscr{A}}_{\ell+1}}
\left( G^{\vec{s}}_{\vec{\mathscr{A}}_{\ell}} \right)(t) .
\end{equation*}
The remainder term admits the form
\begin{equation*}
\widetilde{\mathcal{R}}_{\ell}(t,v,\varphi) = -\frac{1}{\sqrt{2}}\,
\sum_{\vec{s} \in S_{\ell}} 
\sum_{\vec{\mathscr{A}}_{\ell+1} \in \{2,3\}^{\ell} \times \{2\}} 
\widetilde{\mathcal{R}}^{\vec{s}}_{\vec{\mathscr{A}}_{\ell+1}}(t,v,\varphi),\ \text{where}\ \widetilde{\mathcal{R}}^{\vec{s}}_{\vec{\mathscr{A}}_{\ell+1}}(t,v,\varphi)
= \widetilde{\mathfrak{J}}_{\vec{\mathscr{A}}_{\ell+1}} \left( G^{\vec{s}}_{\vec{\mathscr{A}}_{\ell}} \right)(t).
\end{equation*}
\end{defi}

Next, we consider the main terms arising from the integrals of $\psi_0$.

\begin{defi}\label{Defi:polynomial_contributions}
    Let $\ell \geq 1$. The expression $\widetilde{\psi}_{\ell}(t, v, \varphi)$ admits the decomposition
\begin{equation*}
\widetilde{\psi}_{\ell}(t, v, \varphi) =  \sum_{\vec{s} \in S_{\ell}} \sum_{\vec{\mathscr{A}}_{\ell} \in \{2,3\}^{\ell}}
\widetilde{\psi}^{\vec{s}}_{\vec{\mathscr{A}}_{\ell}}(t, v, \varphi),\ \text{where}\ \widetilde{\psi}^{\vec{s}}_{\vec{\mathscr{A}}_{\ell}}(t, v, \varphi)
= \widetilde{\mathfrak{J}}_{\vec{\mathscr{A}}_{\ell}}(\psi_0(t,v^{\vec{s}}_{\vec{\mathscr{A}}_{\ell}}, \varphi^{\vec{s}}_{\vec{\mathscr{A}}_{\ell}})).
\end{equation*}
\end{defi}

Combining the Definitions~\ref{Defi:limiting_contributions} and~\ref{Defi:polynomial_contributions}, the remainder terms admit the recursive decomposition:
\begin{equation*}
        \widetilde{\mathcal{R}}_{\ell-1}(t,v,\varphi) = \widetilde{\psi}_{\ell}(t, v, \varphi) + \mathring{\psi}_{\ell}(t, v, \varphi) + \widetilde{\mathcal{R}}_{\ell}(t,v,\varphi).
\end{equation*}

\begin{prop}[Asymptotic expansion of $I(t,v,\varphi)$]\label{Prop:Refined_Asymptotic}
Let $\ell \geq 1$. Then the asymptotic expansion of the average $I(t, v, \varphi)$ takes the form
\begin{equation*}
I(t, v, \varphi) 
=  \psi_{0}(t, v ,\varphi)
+ \sum_{m =1}^{\ell} \widetilde{\psi}_{m}(t, v, \varphi) + \sum_{m = 0}^{\ell} \mathring{\psi}_{m}(t, v, \varphi)
+ \widetilde{\mathcal{R}}_{\ell}(t,v,\varphi).
\end{equation*}
\end{prop}

\begin{lem}\label{lem:computation_of_frak_J}
    Let $\ell \geq 0$, for any fixed $\sqrt{\mathcal{D}} \in i\mathbb{R}_{>0} \cup [0,\tfrac{1}{4})$. For any positive bounded functions $\Psi: \mathbb{R} \to \mathbb{R}$ satisfying $  \Psi(t) \ll_{\ell} (1+t)\, e^{(\lambda + \frac{1}{4})t}$, we have
\begin{equation*}
             \widetilde{\mathfrak{J}}_{\vec{\mathscr{A}}_{\ell}}(\Psi)(t) \ll_{\ell} (1+t)\,  e^{(\lambda + \frac{1}{4}-\ell)t}, \quad
         \mathring{\mathfrak{J}}_{\vec{\mathscr{A}}_{\ell}}(\Psi)(t) \ll_{\ell} (1+t)\,  e^{(\lambda + \frac{1}{4}-\ell)t}.
\end{equation*}
\end{lem}
\begin{proof}
Recall the definition of the integral operators in~\eqref{equation:integraloperator_J} and~\eqref{equation:integral_operator_ring_tilde_J23}. One notice that the integral operator $\mathfrak{J}_{3}$, $\widetilde{\mathfrak{J}}_3$ or $\mathring{\mathfrak{J}}_{3}$ has one extra $e^{-t}$ compared to the one indexed by 2. Therefore \begin{equation*}
     \widetilde{\mathfrak{J}}_{\vec{\mathscr{A}}_{\ell}}(\Psi)(t) \ll_{\ell}  \widetilde{\mathfrak{J}}_{\{2\}^{\ell}}(\Psi)(t), \quad \mathring{\mathfrak{J}}_{\vec{\mathscr{A}}_{\ell}}(\Psi)(t) \ll_{\ell}  \mathring{\mathfrak{J}}_{\{2\}^{\ell}}(\Psi)(t).
\end{equation*}   

By Lemma~\ref{lem:upper_bounds_of_H_and_k}, for any fixed $\sqrt{\mathcal{D}} \in i\mathbb{R}_{>0} \cup [0,\tfrac{1}{4})$, 
    \begin{align*}
 K_{\mathcal{D}}(s)
 \ll (1-s)e^{(\lambda - \frac{1}{4})s}, \quad \text{for any}\ s <0.
\end{align*}
By induction with $\ell$, for $0\leq m \leq \ell$,
\begin{align*}
\int_t^{\infty} K_{\mathcal{D}}(t-r)
e^{-r}\,(1+r)\,e^{(\lambda + \frac{1}{4}-m)r} \mathrm{d}r &\ll \int_t^{\infty} (1+r)(1+r-t)e^{(\frac{1-n}{2}-\frac{1}{4})(t-r)} \, e^{(\frac{1-n}{2} + \frac{1}{4}-m-1)r}\mathrm{d}r \\
&\ll (1+t)\, e^{(\frac{1-n}{2} + \frac{1}{4}-m-1)t}.
\end{align*}
Then we arrive at 
\begin{align*}
    \widetilde{\mathfrak{J}}_{\{2\}^{\ell}}\left((1+t)\, e^{(\frac{1-n}{2} + \frac{1}{4})t}\right)  \ll_{\ell} (1+t)\,  e^{(\frac{1-n}{2} + \frac{1}{4}-\ell)t}.
\end{align*}
Since 
\begin{equation*}
    \mathring{\mathcal{J}}_{2} \left((1+t)\,  e^{(\lambda + \frac{1}{4})t}\right)= \int_0^{\infty} K_{\mathcal{D}}(t-r)
e^{-r}\,(1+r)\,  e^{(\lambda + \frac{1}{4})r} \mathrm{d}r  \ll e^{(\frac{1-n}{2} + \frac{1}{4})t}.
\end{equation*}
Therefore 
\begin{align*}
   \mathring{\mathfrak{J}}_{\{2\}^{\ell+1}}\left((1+t)\,  e^{(\lambda + \frac{1}{4})t}\right)= \widetilde{\mathfrak{J}}_{\{2\}^{\ell}}\circ \mathring{\mathcal{J}}_{2} \left((1+t)\,  e^{(\lambda + \frac{1}{4})t}\right)\ll_{\ell} (1+t)\,  e^{(\frac{1-n}{2} + \frac{1}{4}-\ell)t}.
\end{align*}
\end{proof}
 As the step $\ell$ of iteration growing, more integral operators $\widetilde{\mathcal{J}}_{i}$ appear, for $i \in \{2,3\}$. By Lemma~\ref{lem:computation_of_frak_J},  the remainder term $\widetilde{\mathcal{R}}_{\ell}(t,v,\varphi)$ and the main term $\widetilde{\psi}_{\ell}(t, v, \varphi)$ exhibit progressively faster exponential decay, with a leading decay rate bounded by $e^{(\Re(\lambda_{+})-\ell)t}$. We present the details in the following lemmas.

\begin{lem}[Convergence of $\widetilde{\mathcal{R}}_{\ell}(t,v,\varphi)$]\label{coro:Convergence_of_R}
  For any $\sqrt{\mathcal{D}} \in i\mathbb{R}_{>0} \cup [0,\tfrac{1}{4})$, let $\ell \geq 1$, then 
\begin{align*}
   \| \widetilde{\mathcal{R}}_{\ell}(t,v,\varphi)\|_{W_{k,H}} \ll_{\ell} (1+t)\,  e^{(\lambda + \frac{1}{4}-\ell-1)t} \| v\|_{W_{2\ell+2q_0  + k+2,H}}\, \|\varphi\|_{W_{\ell+q_0+1,1}},
\end{align*}
where $q_0$ is defined in Lemma~\ref{cor:uniform_ODE_bootstrap_sobolev_growth}. 
\end{lem}

\begin{proof}
By Lemma~\ref{cor:uniform_ODE_bootstrap_sobolev_growth}, and expression~\eqref{Equation:vector_G} of $G^{\vec{s}}_{\vec{\mathscr{A}}_{\ell}} (t)$, we have
\begin{align*}
 \|G^{\vec{s}}_{\vec{\mathscr{A}}_{\ell}} (t)\|_{W_{k,H}} \ll (1+t)\,e^{(\lambda + \frac{1}{4})t}\, \|v\|_{W_{2\ell+2q_0+k+2,H}}\, \|\varphi\|_{W_{\ell+q_0+1,1}}.
\end{align*}
Recall Definition~\ref{Defi:limiting_contributions}, the remainder term $\widetilde{\mathcal{R}}_{\ell}(t,v,\varphi)$ satisfies
\begin{align}\label{formula:rough_estimation_of_R_refined}
  \|\widetilde{\mathcal{R}}_{\ell}(t,v,\varphi)\|_{W_{k,H}} &\ll_{ \ell} 
 \sum_{\vec{s} \in S_{\ell}} \sum_{\vec{\mathscr{A}}_{\ell+1} \in  \{2,3\}^{\ell} \times \{2\}}
 \|\widetilde{\mathcal{R}}^{\vec{s}}_{\vec{\mathscr{A}}_{\ell+1}}(t,v,\varphi)\|_{W_{k,H}}\notag \\
 &\ll_{ \ell}  \sum_{\vec{s} \in S_{\ell}} \sum_{\vec{\mathscr{A}}_{\ell+1} \in  \{2,3\}^{\ell} \times \{2\}}
\left|\widetilde{\mathfrak{J}}_{\vec{\mathscr{A}}_{\ell+1}}\bigl(\|G^{\vec{s}}_{\vec{\mathscr{A}}_{\ell}} (t)\|_{W_{k,H}}\bigr)\right| \notag\\
 & \ll_{\ell} \sum_{\vec{s} \in S_{\ell}} \sum_{\vec{\mathscr{A}}_{\ell+1} \in  \{2,3\}^{\ell} \times \{2\}}
\left| \widetilde{\mathfrak{J}}_{\vec{\mathscr{A}}_{\ell+1}}\left((1+t)\,  e^{(\lambda + \frac{1}{4})t}\right)\right| \,
\|v\|_{W_{2\ell+2q_0+k+2,H}}\,
\|\varphi\|_{W_{\ell+q_0+1,1}}\notag\\
& \ll_{\ell}
\left| \widetilde{\mathfrak{J}}_{\{2\}^{\ell+1}}\left((1+t)\,  e^{(\lambda + \frac{1}{4})t}\right)\right| \,
\|v\|_{W_{2\ell+2q_0+k+2,H}}\,
\|\varphi\|_{W_{\ell+q_0+1,1}}.
\end{align}
    By the definition of the operators $\mathcal{J}_i$ in~\eqref{equation:integraloperator_J}, $i \in\{2,3\}$, and the recursive relation~\eqref{equation:recursive_I}, one observes that each occurrence of $\mathcal{J}_{3}$ contributes an additional factor $e^{-t}$ and introduces one more derivative on $v$, compared with $\mathcal{J}_{2}$. 

Consequently, in the iteration of length $\ell$, the maximal Sobolev order arises in the extremal case where all operators are of type $\mathcal{J}_{3}$, leading to a loss of $2\ell$ derivatives. On the other hand, the slowest decay is attained in the opposite extremal case where all operators are of type $\mathcal{J}_{2}$, therefore determines the lowest order of the exponent in the expansion.

Plugging Lemma~\ref{lem:computation_of_frak_J} into \eqref{formula:rough_estimation_of_R_refined}, for $\ell \geq 1$, we have
\begin{align*}
 \|\widetilde{\mathcal{R}}_{\ell}(t,v,\varphi)\|_{W_{k,H}} &\ll_{ \ell} (1+t)\,  e^{(\lambda + \frac{1}{4}-\ell-1)t} \,
\|v\|_{W_{2\ell+2q_0+k+2,H}}\,
\|\varphi\|_{W_{\ell+q_0+1,1}}.
\end{align*}
\end{proof}

\begin{lem}\label{coro:Convergence_of_psi}
Under the same assumption in Lemma~\ref{coro:Convergence_of_R}.
\begin{enumerate}
    \item For $\widetilde{\psi}_{m}(t, v, \varphi)$, $ 1 \leq m \leq \ell$, we have
    \begin{align*}
    \| \widetilde{\psi}_{m}(t, v, \varphi)\|_{W_{k,H}} \ll_{m} (1+t)\,  e^{(\lambda + \frac{1}{4}-m)t} \, \|v\|_{W_{2\ell+2q_0 + k+1,H}}\, \|\varphi\|_{W_{\ell+q_0,1}}.
\end{align*}

 \item For $ \mathring{\psi}_{m}(t, v, \varphi)$, $ 0 \leq m \leq \ell$, we have
    \begin{align*}
    \| \mathring{\psi}_{m}(t, v, \varphi)\|_{W_{k,H}} &\ll_{m} (1+t)\,  e^{(\lambda + \frac{1}{4}-m)t} \, \|v\|_{W_{2\ell +2q_0+ k+2,H}}\, \|\varphi\|_{W_{\ell +q_0+1,1}}.
\end{align*}
\end{enumerate}

\end{lem}
\begin{proof}
Recall Definitions~\ref{Defi:limiting_contributions} and~\ref{Defi:polynomial_contributions}. Then by Lemma~\ref{cor:uniform_ODE_bootstrap_sobolev_growth}, similarly as in formula~\eqref{formula:rough_estimation_of_R_refined}, we have
\begin{align*}
   \| \widetilde{\psi}_{m}(t, v, \varphi)\|_{W_{k,H}} &\ll_{\ell} \left|\widetilde{\mathfrak{J}}_{\{2\}^{m}}\left((1+t)\,  e^{(\lambda + \frac{1}{4})t}\right)\right|\, \|v\|_{W_{2m +2q_0+k+1,H}}\, \|\varphi\|_{W_{m +q_0,1}}, \\ 
    \| \mathring{\psi}_{m}(t, v, \varphi)\|_{W_{k,H}} &\ll_{\ell} \left|\mathring{\mathfrak{J}}_{\{2\}^{m+1}}\left((1+t)\,  e^{(\lambda + \frac{1}{4})t}\right)\right|\, \|v\|_{W_{2m +2q_0+k+2,H}}\, \|\varphi\|_{W_{m +q_0+1,1}}.
\end{align*}
By Lemma~\ref{lem:computation_of_frak_J}, we arrive at the claimed estimates. 
\end{proof}

\subsection{Double Decomposition}\label{section:twice_decomposition}
In this section, in order to discuss \textbf{Case b}, for $\sqrt{\mathcal{D}} \in [\tfrac{1}{4}, \infty)$, we have to introduce the following integral operators acting on bounded functions $\Psi: \mathbb{R} \to H$: for $\sqrt{\mathcal{D}} \neq0$,
\begin{equation}\label{Formula:Naive_integral_operators}
\mathcal{J}_{2}^{\pm}(\Psi)(t) := \sqrt{\frac{2}{\mathcal{D}}} e^{\lambda_{\pm}t} \int_0^t e^{(-1-\lambda_{\pm})r} \Psi(r)\,\mathrm{d}r, \qquad \mathcal{J}_{3}^{\pm}(\Psi)(t) := \frac{1}{\sqrt{\mathcal{D}}} e^{\lambda_{\pm}t} \int_0^t e^{(-2-\lambda_{\pm})r} \Psi(r)\,\mathrm{d}r.
\end{equation}
Then we use the integral operators $\mathcal{J}_{i}^{\pm}$, to analyze the asymptotic behavior of $I(t,v,\varphi)$. 

Recall the operator $\mathcal{J}_{i}$ defined in~\eqref{equation:integraloperator_J}, satisfying the following relationship, for $i \in \{2,3\}$,
\begin{equation*}
    \mathcal{J}_{i} = \mathcal{J}_{i}^{-} - \mathcal{J}_{i}^{+}.
\end{equation*}
The convergence of the corresponding integrals
\begin{equation*}
\int_0^\infty e^{(-i+1-\lambda_{\pm}) r} \Psi(r)\,\mathrm{d}r,\quad \int_t^\infty e^{(-i+1-\lambda_{\pm}) r} \Psi(r)\,\mathrm{d}r,
\end{equation*}
must be verified separately for the $+$ and $-$ components.

Recall the expression~\eqref{equation:definition_psi0_R_0}, we rewrite 
\begin{equation}\label{defi:new_symbol_psi_R_53}
    \psi_0(t, v, \varphi):=C^{+}(\mathcal{D},v,\varphi)e^{\lambda_+ t} + C^{-}(\mathcal{D},v,\varphi)e^{\lambda_- t}, \quad \mathcal{R}_0(t,v,\varphi):=-\frac{1}{\sqrt{2}}(\mathcal{J}^{-}_{2}-\mathcal{J}^{+}_{2})\left( G_{v,\varphi}(t)\right),
\end{equation}
where
\begin{equation*}
    C^{+}(\mathcal{D},v,\varphi)= \frac{I(0,d\pi(a_1)v,\varphi)- \lambda_- I(0, v,\varphi)}{\lambda_+-\lambda_-},\qquad C^{-}(\mathcal{D},v,\varphi)= \frac{ \lambda_+ I(0, v,\varphi)-I(0,d\pi(a_1)v,\varphi)}{\lambda_+-\lambda_-}.
\end{equation*}
Recall the expressions~\eqref{equation:definition_psi0_R_0}. Using the integral operators~\eqref{Formula:Naive_integral_operators}, \eqref{eq:I_initial_formula} can be rewritten as:
\begin{equation*}
    I(t,v,\varphi)
=\psi_0 (t, v, \varphi)
+ \mathcal{R}_0(t,v,\varphi).
\end{equation*}
    As in~\eqref{equation:J_2decomposition}, we now begin to decompose the limiting part from $(\mathcal{J}_{2}^{-}-\mathcal{J}_{2}^{+}) (G_{v,\varphi})(t)$. Due to Lemma~\ref{cor:uniform_ODE_bootstrap_sobolev_growth}, for any $\sqrt{\mathcal{D}}\geq\tfrac{1}{4}$, we have the following estimate of the integrand in $\mathcal{J}_{2}^{\pm}(G_{v, \varphi})(t)$:
\begin{align}\label{eq:estimation_of_J-_act_G}
 \text{for}~ \mathcal{J}_{2}^{+}(G_{v, \varphi})(t),\qquad &\| e^{(-1-\lambda_+)t}\,G_{v,\varphi}(t)\|_{W_{k,H}} \ll_{v,\varphi}  e^{(-1-\lambda_+)t}\,(1+t)\,e^{\lambda_+ t} = (1+t)\,e^{-t},\notag\\
       \text{for}~ \mathcal{J}_{2}^{-}(G_{v, \varphi})(t),\qquad &\| e^{(-1-\lambda_-)t}\,G_{v,\varphi}(t)\|_{W_{k,H}} \ll_{v,\varphi}  e^{(-1-\lambda_-)t}\,(1+t)\,e^{\lambda_+ t} = (1+t)\,e^{(2\sqrt{\mathcal{D}}-1)t}.
\end{align}
The estimate~\eqref{eq:estimation_of_J-_act_G} ensures that we can decompose the integral operator $\mathcal{J}_{2}^{+}$ as follows:
\begin{equation*}
    \mathcal{J}_{2}^{+}(G_{v, \varphi})(t) = \sqrt{\frac{2}{\mathcal{D}}} e^{\lambda_+ t}\int_{0}^{\infty}e^{(-1-\lambda_+)r}\,G_{v, \varphi}(r) \,\mathrm{d}r - \sqrt{\frac{2}{\mathcal{D}}} e^{\lambda_+ t}\int_{t}^{\infty} e^{(-1-\lambda_+)r}\, G_{v, \varphi}(r) \,\mathrm{d}r.
\end{equation*}

While the integrand $e^{(-1-\lambda_-)t}\,G_{v,\varphi}(t)$ in $\mathcal{J}_{2}^{-}(G_{v, \varphi})(t)$ may not converge as $t \to \infty$, which depends on the value of $\sqrt{\mathcal{D}}$. Therefore, $\mathcal{J}_{2}^{-}(G_{v, \varphi})(t)$ admits such decomposition only after sufficiently many iteration steps, which we will explain later.

We now define the associated limiting and tail operators by
\begin{equation}\label{Formula:tilde_J_and_0_J}
\begin{aligned}
        \mathring{\mathcal{J}}_{2}^{\pm}(\Psi)(t) &= \sqrt{\frac{2}{\mathcal{D}}} e^{\lambda_{\pm}t}\int_{0}^{\infty}e^{(-1-\lambda_{\pm})r}\,\Psi(r) \,\mathrm{d}r ,\quad
\mathring{\mathcal{J}}_{3}^{\pm}(\Psi)(t) = \frac{1}{\sqrt{\mathcal{D}}} e^{\lambda_{\pm}t}\int_{0}^{\infty}e^{(-2-\lambda_{\pm})r}\,\Psi(r) \,\mathrm{d}r , \\
    \widetilde{\mathcal{J}}_{2}^{\pm}(\Psi)(t) &= \sqrt{\frac{2}{\mathcal{D}}} e^{\lambda_{\pm}t}\int_{t}^{\infty} e^{(-1-\lambda_{\pm})r}\, \Psi(r) \,\mathrm{d}r ,\quad
\widetilde{\mathcal{J}}_{3}^{\pm}(\Psi)(t) = \frac{1}{\sqrt{\mathcal{D}}} e^{\lambda_{\pm}t}\int_{t}^{\infty} e^{(-2-\lambda_{\pm})r}\, \Psi(r) \,\mathrm{d}r.
\end{aligned}
\end{equation}
Now we begin with the first decomposition by rewriting the expression~\eqref{eq:kernel_solution_all_D} as following:
\begin{align}\label{eq:first_refined_iteration_scheme}
I(t,v,\varphi)
&= \psi_0(t, v, \varphi) + \frac{1}{\sqrt{2}}(\mathcal{J}_{2}^{-}-\mathcal{J}_{2}^{+})(G_{v, \varphi})(t)\notag\\
&= \psi_0(t, v, \varphi)+\frac{1}{\sqrt{2}}\mathring{\mathcal{J}}_{2}^{+}(G_{v, \varphi})(t)- \frac{1}{\sqrt{2}}\left(\mathcal{J}_{2}^{-} + \widetilde{\mathcal{J}}_{2}^{+}\right)(G_{v, \varphi})(t)\notag \\
&=\psi_0(t, v, \varphi)+\mathring{\psi}_0(t, v, \varphi)+ \widetilde{\mathcal{R}}_{0}(t,v,\varphi),
\end{align}
where
\begin{align*}
    \mathring{\psi}_0(t, v, \varphi):=\frac{1}{\sqrt{2}}\mathring{\mathcal{J}}_{2}^{+}(G_{v, \varphi})(t), \qquad \widetilde{\mathcal{R}}_{0}(t,v,\varphi):=-\frac{1}{\sqrt{2}}\left(\mathcal{J}_{2}^{-} + \widetilde{\mathcal{J}}_{2}^{+}\right)(G_{v, \varphi})(t).
\end{align*}
By Lemma~\ref{cor:uniform_ODE_bootstrap_sobolev_growth}, the decomposition in~\eqref{eq:first_refined_iteration_scheme} can isolate the limiting term $\mathring{\psi}_0$ as part of the main contribution. Consequently, no additional term  $e^{\lambda_+ t}$ is generated from $\widetilde{\mathcal{R}}_{0}(t,v,\varphi)$ in the subsequent iteration process.

Define
\begin{align*}
    \widetilde{\psi}_1(t, v, \varphi) :=&  \sum_{s=2}^{n} 
\left(\mathcal{J}_{2}^{-} + \widetilde{\mathcal{J}}_{2}^{+}\right)\left(\psi_0(t,\mathrm{d}\pi(n_{s-1})v, \mathfrak{K}_{k_{s-1}}\varphi)\right) +\sum_{s=2}^{n} 
\left(\mathcal{J}_{3}^{-} + \widetilde{\mathcal{J}}_{3}^{+}\right)
\left(\psi_0(t,\mathrm{d}\pi(n_{s-1})^{2}v, \varphi)\right),\\
\mathring{\psi}_1(t, v, \varphi):=&
\frac{1}{\sqrt{2}} \sum_{s=2}^{n} 
\left(\mathcal{J}_{2}^{-} + \widetilde{\mathcal{J}}_{2}^{+}\right) \circ\mathring{\mathcal{J}}_{2}^{+}\left( G_{\mathrm{d}\pi(n_{s-1})v, \mathfrak{K}_{k_{s-1}}\varphi} \right)(t)  \\
&+\sum_{s=2}^{n} 
\left(\mathcal{J}_{3}^{-} + \widetilde{\mathcal{J}}_{3}^{+}\right)  \circ\mathring{\mathcal{J}}_{2}^{+}
\left( G_{\mathrm{d}\pi(n_{s-1})^{2}v, \varphi} \right)(t),\\
    \widetilde{\mathcal{R}}_{1}(t,v,\varphi) :=&  -\frac{1}{\sqrt{2}}\sum_{s=2}^{n} \left(\mathcal{J}_{2}^{-} + \widetilde{\mathcal{J}}_{2}^{+}\right)\circ\left(\mathcal{J}_{2}^{-} + \widetilde{\mathcal{J}}_{2}^{+}\right)
\left( G_{\mathrm{d}\pi(n_{s-1})v, \mathfrak{K}_{k_{s-1}}\varphi} \right)(t) \\
& -\sum_{s=2}^{n} 
\left(\mathcal{J}_{3}^{-} + \widetilde{\mathcal{J}}_{3}^{+}\right)\circ
\left(\mathcal{J}_{2}^{-} + \widetilde{\mathcal{J}}_{2}^{+}\right)
\left(G_{\mathrm{d}\pi(n_{s-1})^{2}v, \varphi}\right)(t).
\end{align*}
Then we have the first step of the iteration:
\begin{equation*}
\begin{aligned}
    I(t, v, \varphi) &= \psi_0(t, v, \varphi) + \mathring{\psi}_0(t, v, \varphi) +\widetilde{\mathcal{R}}_{0}(t,v,\varphi) \\
    &= \psi_0(t, v, \varphi)  + \mathring{\psi}_0(t, v, \varphi)+ \widetilde{\psi}_1(t, v, \varphi)+\mathring{\psi}_1(t, v, \varphi)+\widetilde{\mathcal{R}}_{1}(t,v,\varphi).
\end{aligned}
\end{equation*}

To describe the proceeding iteration scheme, we introduce the following notations and definitions. Let
\begin{equation*}
\ell_+ = \left\lfloor \lambda_{+} - \lambda_{-} \right\rfloor + 2.
\end{equation*}
We define the following integral operators acting on suitable bounded functions $\Psi: \mathbb{R} \to H$: for $i\in\{2,3\}$,
\begin{align*}
 \widetilde{\mathfrak{J}}_{\vec{\mathscr{A}}_{\ell}}(\Psi)(t):=
\prod_{k=1}^{\ell} \left(\mathcal{J}^{-}_{\mathscr{A}_{k}} + \widetilde{\mathcal{J}}^{+}_{\mathscr{A}_{k}} \right)(\Psi)(t),\qquad
    \mathring{\mathfrak{J}}_{\vec{\mathscr{A}}_{\ell}}(\Psi)(t):=
\prod_{k=1}^{\ell-1} \left( \mathcal{J}^{-}_{\mathscr{A}_{k}} + \widetilde{\mathcal{J}}^{+}_{\mathscr{A}_{k}} \right) \circ 
\mathring{\mathcal{J}}^{+}_{\mathscr{A}_{\ell }} (\Psi)(t).
\end{align*}
The following definitions are well defined by Lemma~\ref{lem:computation_of_frak_J_53}.
\begin{defi}\label{definition:twotypepsiandtildeR}
\begin{enumerate}[leftmargin=*,itemindent=0pt]
    \item  Let $1\leq \ell \leq \ell_+$. The main term contribution $\widetilde{\psi}_{\ell}(t, v, \varphi)$ admits the expression
\begin{equation*}
\widetilde{\psi}_{\ell}(t, v, \varphi) = \sum_{\vec{s} \in S_{\ell}} \sum_{\vec{\mathscr{A}}_{\ell} \in \{2,3\}^{\ell}}
\widetilde{\psi}^{\vec{s}}_{\vec{\mathscr{A}}_{\ell}}(t, v ,\varphi),\ \text{where}\ \widetilde{\psi}^{\vec{s}}_{\vec{\mathscr{A}}_{\ell}}(t, v ,\varphi)
=\widetilde{\mathfrak{J}}_{\vec{\mathscr{A}}_{\ell}}\left(\psi_0(t,v^{\vec{s}}_{\vec{\mathscr{A}}_{\ell}}, \varphi^{\vec{s}}_{\vec{\mathscr{A}}_{\ell}})\right).
\end{equation*}
    \item Let $0\leq \ell \leq \ell_+$. The limiting contribution $\mathring{\psi}_{\ell}(t, v, \varphi)$ admits the expression
\begin{equation*}
\mathring{\psi}_{\ell}(t, v, \varphi) = \frac{1}{\sqrt{2}}\sum_{\vec{s} \in S_{\ell}} \sum_{\vec{\mathscr{A}}_{\ell+1} \in \{2,3\}^{\ell} \times \{2\}} 
\mathring{\psi}^{\vec{s}}_{\vec{\mathscr{A}}_{\ell+1}}(t, v ,\varphi),\ \text{where}\ \mathring{\psi}^{\vec{s}}_{\vec{\mathscr{A}}_{\ell+1}}(t, v ,\varphi)
= \mathring{\mathfrak{J}}_{\vec{\mathscr{A}}_{\ell+1}}
\left( G^{\vec{s}}_{\vec{\mathscr{A}}_{\ell}} \right)(t) .
\end{equation*} 
  
\item  Let $0\leq \ell \leq \ell_+$. The remainder term admits the expression
\begin{equation*}
\widetilde{\mathcal{R}}_{\ell}(t, v ,\varphi) = -\frac{1}{\sqrt{2}}
\sum_{\vec{s} \in S_{\ell}} 
\sum_{\vec{\mathscr{A}}_{\ell+1} \in \{2,3\}^{\ell} \times \{2\}} 
\widetilde{\mathcal{R}}^{\vec{s}}_{\vec{\mathscr{A}}_{\ell+1}}(t, v ,\varphi), \text{where}\ \widetilde{\mathcal{R}}^{\vec{s}}_{\vec{\mathscr{A}}_{\ell+1}}(t, v ,\varphi)
= \widetilde{\mathfrak{J}}_{\vec{\mathscr{A}}_{\ell+1}} \left(  G^{\vec{s}}_{\vec{\mathscr{A}}_{\ell}} \right)(t).
\end{equation*}
\end{enumerate}   
\end{defi}
We iteratively apply the decomposition in~\eqref{eq:first_refined_iteration_scheme}, and obtain the recursive relation:
\begin{equation}\label{eq:recursive_relation_in_DD_first}
     \widetilde{\mathcal{R}}_{\ell-1}(t,v,\varphi) = \widetilde{\psi}_{\ell}(t, v, \varphi) +\mathring{\psi}_{\ell}(t, v, \varphi) + \widetilde{\mathcal{R}}_{\ell}(t,v,\varphi), \quad\text{for}\quad 1\leq \ell \leq \ell_+.
\end{equation}
After $\ell_+$ iterations,  we have
\begin{equation*}
   \| \widetilde{\mathcal{R}}_{\ell_+}(t,v,\varphi) \|_{W_{k,H}} \ll_{\ell,v,\varphi} (1+t)\,\max\{e^{(\lambda_+ - \ell_+-1)t}, e^{\lambda_- t}\} \ll_{\ell,v,\varphi} (1+t)\,e^{\lambda_- t},
\end{equation*}
which is enough to justify the second decomposition: $\mathcal{J}_{2}^{-} = \mathring{\mathcal{J}}_{2}^{-} - \widetilde{\mathcal{J}}_{2}^{-}.$
The integral operators are defined in~\eqref{Formula:tilde_J_and_0_J}.
We continue the iteration scheme with inserting the second decomposition and obtain:
\begin{equation}\label{equation:doubledecomposition_sketch}
  \begin{aligned}
       \widetilde{\mathcal{R}}^{\vec{s}}_{\vec{\mathscr{A}}_{\ell_+ +1}}(t, v ,\varphi)
       &=  \widetilde{\mathfrak{J}}_{\vec{\mathscr{A}}_{\ell_+ +1}} \left( G^{\vec{s}}_{\vec{\mathscr{A}}_{\ell_+}} \right)(t) =\left(  \mathring{\mathcal{J}}_{\mathscr{A}_{1}}^{-} - \widetilde{\mathcal{J}}_{\mathscr{A}_{1}}^{-} + \widetilde{\mathcal{J}}_{\mathscr{A}_1}^{+}\right)\circ \widetilde{\mathfrak{J}}_{\vec{\mathscr{A}}_{\ell_+}} \left( G^{\vec{s}}_{\vec{\mathscr{A}}_{\ell_+}} \right)(t)\\
       &=\mathring{\mathcal{J}}_{\mathscr{A}_1}^{-}  \circ \widetilde{\mathfrak{J}}_{\vec{\mathscr{A}}_{\ell_+ }} \left(  G^{\vec{s}}_{\vec{\mathscr{A}}_{\ell_+}} \right)(t) - \left( \widetilde{\mathcal{J}}_{\mathscr{A}_1}^{-}- \widetilde{\mathcal{J}}_{\mathscr{A}_1}^{+}\right) \circ \widetilde{\mathfrak{J}}_{\vec{\mathscr{A}}_{\ell_+ }} \left(  G^{\vec{s}}_{\vec{\mathscr{A}}_{\ell_+}} \right)(t).
  \end{aligned}
\end{equation}

To describe the proceeding iteration scheme, we introduce the following notations and definitions. Define the following integral operators acting on suitable bounded functions $\Psi: \mathbb{R} \to H$: for $i\in\{2,3\}$,
\begin{align}\label{defi:frak_J_in_twice_decomposition}
        \mathring{\widetilde{\mathfrak{J}}}_{\vec{\mathscr{A}}_{\ell}}(\Psi)(t):=&
\prod_{k=1}^{\ell - \ell_+ -1 } \left( \widetilde{\mathcal{J}}^{-}_{\mathscr{A}_{k}} - \widetilde{\mathcal{J}}^{+}_{\mathscr{A}_{k}} \right)
\circ \mathring{\mathcal{J}}^{-}_{\mathscr{A}_{\ell - \ell_+}} \circ 
\prod_{k=\ell - \ell_+ +1}^{\ell} \left(\mathcal{J}^{-}_{\mathscr{A}_{k}} + \widetilde{\mathcal{J}}^{+}_{\mathscr{A}_{k}} \right) (\Psi)(t), \notag\\
      \widetilde{\mathring{\mathfrak{J}}}_{\vec{\mathscr{A}}_{\ell}}(\Psi)(t):=&
\prod_{k=1}^{\ell - \ell_+ -1 } \left( \widetilde{\mathcal{J}}^{-}_{\mathscr{A}_{k}} - \widetilde{\mathcal{J}}^{+}_{\mathscr{A}_{k}} \right) 
\prod_{k=\ell - \ell_+}^{\ell-1} \left(\mathcal{J}^{-}_{\mathscr{A}_{k}} + \widetilde{\mathcal{J}}^{+}_{\mathscr{A}_{k}} \right) 
\circ \mathring{\mathcal{J}}^{-}_{\mathscr{A}_{\ell}} (\Psi)(t),\\
\widetilde{\widetilde{\mathfrak{J}}}_{\vec{\mathscr{A}}_{\ell}}(\Psi)(t):=&
\prod_{k=1}^{\ell - \ell_+} \left( \widetilde{\mathcal{J}}^{-}_{\mathscr{A}_{k}} - \widetilde{\mathcal{J}}^{+}_{\mathscr{A}_{k}} \right)
\prod_{k=\ell - \ell_+ +1}^{\ell} \left(\mathcal{J}^{-}_{\mathscr{A}_{k}} + \widetilde{\mathcal{J}}^{+}_{\mathscr{A}_{k}} \right)(\Psi)(t). \notag
\end{align}
We introduce the definitions of the terms for the general recursive formula, which are well-defined by Lemma~\ref{lem:computation_of_frak_J_53}. 
\begin{defi}\label{definition:manymanypsiandtildetildeR}
\begin{enumerate}[leftmargin=*,itemindent=0pt]
 \item Let $ \ell \geq \ell_+$. The limiting contribution $\mathring{\widetilde{\psi}}_{\ell}(t)$ admits the expression
\begin{equation*}
\mathring{\widetilde{\psi}}_{\ell}(t, v ,\varphi) = \frac{1}{\sqrt{2}}\sum_{\vec{s} \in S_{\ell}} \sum_{\vec{\mathscr{A}}_{\ell+1} \in \{2,3\}^{\ell} \times \{2\}} 
\mathring{\widetilde{\psi}}^{\vec{s}}_{\vec{\mathscr{A}}_{\ell+1}}(t, v ,\varphi), \text{where}\ \mathring{\widetilde{\psi}}^{\vec{s}}_{\vec{\mathscr{A}}_{\ell+1}}(t, v ,\varphi)
= \mathring{\widetilde{\mathfrak{J}}}_{\vec{\mathscr{A}}_{\ell+1}}
\left( G^{\vec{s}}_{\vec{\mathscr{A}}_{\ell}} \right)(t).
\end{equation*}

\item  Let $\ell > \ell_+$. The main term contribution $\widetilde{\widetilde{\psi}}_{\ell}(t)$ admits the expression
\begin{equation*}
\widetilde{\widetilde{\psi}}_{\ell}(t, v ,\varphi)= \sum_{\vec{s} \in S_{\ell}} \sum_{\vec{\mathscr{A}}_{\ell} \in \{2,3\}^{\ell}} 
\widetilde{\widetilde{\psi}}^{\vec{s}}_{\vec{\mathscr{A}}_{\ell}}(t, v ,\varphi), \text{where}\ \widetilde{\widetilde{\psi}}^{\vec{s}}_{\vec{\mathscr{A}}_{\ell}}(t, v ,\varphi)
= \widetilde{\widetilde{\mathfrak{J}}}_{\vec{\mathscr{A}}_{\ell}}\left(\psi_0(t,v^{\vec{s}}_{\vec{\mathscr{A}}_{\ell}}, \varphi^{\vec{s}}_{\vec{\mathscr{A}}_{\ell}})\right).
\end{equation*}

 \item Let $ \ell > \ell_+$. The limiting contribution $\widetilde{\mathring{\psi}}_{\ell}(t)$ admits the expression
\begin{equation*}
\widetilde{\mathring{\psi}}_{\ell}(t, v ,\varphi) = \frac{1}{\sqrt{2}}\sum_{\vec{s} \in S_{\ell}} \sum_{\vec{\mathscr{A}}_{\ell+1} \in \{2,3\}^{\ell} \times \{2\}} 
\widetilde{\mathring{\psi}}^{\vec{s}}_{\vec{\mathscr{A}}_{\ell+1}}(t, v ,\varphi), \text{where}\ \widetilde{\mathring{\psi}}^{\vec{s}}_{\vec{\mathscr{A}}_{\ell+1}}(t, v ,\varphi)
= \widetilde{\mathring{\mathfrak{J}}}_{\vec{\mathscr{A}}_{\ell+1}}
\left( G^{\vec{s}}_{\vec{\mathscr{A}}_{\ell}} \right)(t).
\end{equation*}
  
\item  Let $\ell \geq \ell_+$. The remainder term admits the expression
\begin{equation}\label{equation:tildetildeR_l}
\widetilde{\widetilde{\mathcal{R}}}_{\ell}(t, v ,\varphi) = -\frac{1}{\sqrt{2}}
\sum_{\vec{s} \in S_{\ell}} 
\sum_{\vec{\mathscr{A}}_{\ell+1} \in \{2,3\}^{\ell} \times \{2\}} 
\widetilde{\widetilde{\mathcal{R}}}^{\vec{s}}_{\vec{\mathscr{A}}_{\ell+1}}(t, v ,\varphi), \text{where}\ \widetilde{\widetilde{\mathcal{R}}}^{\vec{s}}_{\vec{\mathscr{A}}_{\ell+1}}(t, v ,\varphi)
= \widetilde{\widetilde{\mathfrak{J}}}_{\vec{\mathscr{A}}_{\ell+1}} \left(  G^{\vec{s}}_{\vec{\mathscr{A}}_{\ell}} \right)(t).
\end{equation}
\end{enumerate}   
\end{defi}

Recall~\eqref{equation:doubledecomposition_sketch}, then we decompose $\widetilde{\mathcal{R}}_{\ell_+}(t,v,\varphi)$ as following:
\begin{equation*}
    \widetilde{\mathcal{R}}_{\ell_+}(t,v,\varphi)= \mathring{\widetilde{\psi}}_{\ell_+}(t, v, \varphi) + \widetilde{\widetilde{\mathcal{R}}}_{\ell_+}(t,v,\varphi).
\end{equation*}
Plugging~\eqref{eq:first_refined_iteration_scheme} into~\eqref{equataion: G_v}, we rewrite $\widetilde{\widetilde{\mathcal{R}}}_{\ell_+ +1}(t,v,\varphi)$ as:
\begin{equation}\label{eq:second_recursive_second_step}
    \widetilde{\widetilde{\mathcal{R}}}_{\ell_+}(t,v,\varphi)= \widetilde{\widetilde{\psi}}_{\ell_+ +1}(t, v, \varphi)+ \widetilde{\mathring{\psi}}_{\ell_+ +1}(t, v, \varphi)+ \mathring{\widetilde{\psi}}_{\ell_+ +1}(t, v, \varphi) + \widetilde{\widetilde{\mathcal{R}}}_{\ell_+ +1}(t,v,\varphi).
\end{equation}
This is what we called double decomposition.

We iteratively apply the decomposition in~\eqref{eq:second_recursive_second_step}, and obtain the following recursive relation:
\begin{alignat*}{2}
      \widetilde{\widetilde{\mathcal{R}}}_{\ell-1}(t,v,\varphi) &= \widetilde{\widetilde{\psi}}_{\ell}(t, v, \varphi)+\widetilde{\mathring{\psi}}_{\ell}(t, v, \varphi) +\mathring{\widetilde{\psi}}_{\ell}(t, v, \varphi) + \widetilde{\widetilde{\mathcal{R}}}_{\ell}(t,v,\varphi), &\quad&\text{for}\quad \ell > \ell_+.
\end{alignat*} 
Combining this above recursive relation with~\eqref{eq:recursive_relation_in_DD_first}, we arrive at the following result.

\begin{prop}[Full asymptotic expansion for $\ell > \ell_+$]\label{prop:Full_asymptotic_of_ODE _real_part}
Let $\ell > \ell_+$. Then the asymptotic expansion of the average $I(t,v,\varphi)$ admits the form
\begin{equation*}
\begin{aligned}
    I(t,v,\varphi) 
=&\psi_{0}(t, v ,\varphi)
+ \sum_{m = 1}^{\ell_+} \widetilde{\psi}_{m}(t, v, \varphi) + \sum_{k = 0}^{\ell_+} \mathring{\psi}_{m}(t, v, \varphi)\\
&+ \sum_{m = \ell_+}^{\ell} \mathring{\widetilde{\psi}}_{m}(t,v,\varphi)+ \sum_{m = \ell_+ + 1}^{\ell} \widetilde{\widetilde{\psi}}_{m}(t, v ,\varphi)  + \sum_{m = \ell_+ +1}^{\ell} \widetilde{\mathring{\psi}}_{m}(t,v,\varphi) + \widetilde{\widetilde{\mathcal{R}}}_{\ell}(t,v,\varphi).
\end{aligned}
\end{equation*}
\end{prop}

Next, we prove the convergence of the main terms
$\psi_{m}$,
$\widetilde{\psi}_{m}$,
$\mathring{\psi}_{m}$,
$\mathring{\widetilde{\psi}}_{m}$,
$\widetilde{\widetilde{\psi}}_{m}$,
and
$\widetilde{\mathring{\psi}}_{m}$,
as well as the remainder term
$\widetilde{\widetilde{\mathcal{R}}}_{\ell}(t,v,\varphi)$,
in the Sobolev space $W_{k}(H)$.

We begin with the following standard integral computations.
\begin{lem}\label{lem:computation_of_frak_J_53}
For any fixed $\mathcal{D} \in \left[\tfrac{1}{4},\infty\right)$, let $\ell > \ell_+$. For any bounded positive functions $\Psi_1, \Psi_2: \mathbb{R} \to \mathbb{R}$ satisfying $\Psi_1 =e^{\lambda_+ t} + e^{\lambda_- t}$, $\Psi_2(t) \ll (1+t)\, e^{\lambda_+ t}$, we have
\begin{alignat*}{2}
 \widetilde{\mathfrak{J}}_{\{2\}^{m}}(\Psi_1)(t)
&\ll_{m}
(1+t)\,\max\{e^{(\lambda_+-m)t}, e^{\lambda_- t} \},
&\qquad& \text{with } 1\leq m \leq \ell_+,
\\
\mathring{\mathfrak{J}}_{\{2\}^{m+1}}(\Psi_2)(t)
&\ll_{m}
(1+t)\,\max\{e^{(\lambda_+-m)t}, e^{\lambda_- t} \},
&\qquad& \text{with } 0\leq m \leq \ell_+,
\\
\widetilde{\widetilde{\mathfrak{J}}}_{\{2\}^{m}}(\Psi_1)(t)
&\ll_{m}
(1+t)\,e^{(\lambda_{+}+2  -m)t},
&\qquad& \text{with } \ell_++1\leq m \leq \ell,
\\
\widetilde{\widetilde{\mathfrak{J}}}_{\{2\}^{m+1}}(\Psi_2)(t)
&\ll_{m}
(1+t)\,e^{(\lambda_{+}+1-m)t},
&\qquad& \text{with } \ell_++1\leq m \leq \ell,
\\
\mathring{\widetilde{\mathfrak{J}}}_{\{2\}^{m+1}}(\Psi_2)(t)
&\ll_{m}
(1+t)\,e^{(\lambda_{+}+2  -m)t},
&\qquad& \text{with } \ell_+\leq m \leq \ell,
\\
\widetilde{\mathring{\mathfrak{J}}}_{\{2\}^{m+1}}(\Psi_2)(t)
&\ll_{m}
(1+t)\,e^{(\lambda_{+}+2  -m)t},
&\qquad& \text{with } \ell_++1\leq m \leq \ell.
    \end{alignat*}
\end{lem}
\begin{proof}
For $\sqrt{\mathcal{D}} \geq  \frac{1}{4}$, we have $\frac{1}{\sqrt{\mathcal{D}}}\leq 4$.  We begin with $\widetilde{\mathfrak{J}}_{\{2\}^{m}}(\Psi_1)(t)$, $ 1\leq m \leq \ell_+$. 
By induction, for $j \geq 0$, we need to deal with the following integrals:
     \begin{equation*}
  \begin{aligned}
            \widetilde{\mathcal{J}}^{+}_2(e^{(\lambda_+ -j)  t}) &= \frac{\sqrt{2}e^{\lambda_{+} t}}{\sqrt{\mathcal{D}}}\int_t^{\infty} e^{-\lambda_{+} r}
e^{-r}\,e^{(\lambda_+ -j)  r} \mathrm{d}r \ll e^{\lambda_{+} t} \int_t^{\infty} e^{-(j+1)r}\mathrm{d}r \ll e^{(\lambda_+ -j-1)t}, \\
        \widetilde{\mathcal{J}}^{+}_2(e^{(\lambda_- -j) t}) &= \frac{\sqrt{2}e^{\lambda_{+} t}}{\sqrt{\mathcal{D}}}\int_t^{\infty} e^{-\lambda_{+} r}
e^{-r}\,e^{(\lambda_- -j)  r} \mathrm{d}r \ll e^{\lambda_{+} t} \int_t^{\infty}e^{-(2\sqrt{\mathcal{D}}+j+1)r} \mathrm{d}r \ll e^{(\lambda_- -j-1)t}, \\
       \mathcal{J}^{-}_2(e^{(\lambda_- -j)  t}) &= \frac{\sqrt{2}e^{\lambda_{-} t}}{\sqrt{\mathcal{D}}}\int_0^{t} e^{-\lambda_{-} r}
e^{-r}\,e^{(\lambda_- -j)  r} \mathrm{d}r \ll e^{\lambda_{-} t} \int_0^{t} e^{-(j+1)r}\mathrm{d}r\ll e^{\lambda_- t},\\
       \mathcal{J}^{-}_2(e^{(\lambda_+ -j)  t}) &= \frac{\sqrt{2}e^{\lambda_{-} t}}{\sqrt{\mathcal{D}}}\int_0^{t} e^{-\lambda_{-} r}
e^{-r}\,e^{(\lambda_+ -j)  r} \mathrm{d}r = \frac{\sqrt{2}e^{\lambda_{-} t}}{\sqrt{\mathcal{D}}} \int_0^{t} e^{(2\sqrt{\mathcal{D}}-j-1)r}\mathrm{d}r.
  \end{aligned}
     \end{equation*}
     Noticed that, the integral $\int_0^{t} e^{(2\sqrt{\mathcal{D}}-j-1)r}\mathrm{d}r$ produce the factor $\frac{1}{2\sqrt{\mathcal{D}}-j-1}$, which does not have uniform bound with $\sqrt{\mathcal{D}}$, when $\sqrt{\mathcal{D}} \geq  \frac{1}{4}$. Hence we need to handle the integral $\int_0^{t} e^{(2\sqrt{\mathcal{D}}-j-1)r}\mathrm{d}r$ more carefully.
     
     For any integer $j \geq 0$ such that $|2\sqrt{\mathcal{D}}-j-1| > 1$, we have 
      \begin{equation*}
       \mathcal{J}^{-}_2(e^{(\lambda_+ -j)  t}) = \frac{\sqrt{2}e^{\lambda_{-} t}}{\sqrt{\mathcal{D}}} \int_0^{t} e^{(2\sqrt{\mathcal{D}}-j-1)r}\mathrm{d}r\ll e^{(\lambda_+ -j-1)t}+ e^{\lambda_- t}.
     \end{equation*}
     And for some integer $j \geq 0$ such that $|2\sqrt{\mathcal{D}}-j-1| \leq 1$, we have 
      \begin{equation*}
       \mathcal{J}^{-}_2(e^{(\lambda_+ -j)  t}) = \frac{\sqrt{2}e^{\lambda_{-} t}}{\sqrt{\mathcal{D}}} \int_0^{t} e^{(2\sqrt{\mathcal{D}}-j-1)r}\mathrm{d}r \ll e^{\lambda_{-} t}\int_0^{t} e^{r}\mathrm{d}r\ll e^{(\lambda_- +1) t} +  e^{\lambda_- t}.
     \end{equation*}
     Then for the next induction step, one has
       \begin{equation*}
\begin{aligned}
          ( \mathcal{J}^{-}_2 +   \widetilde{\mathcal{J}}^{+}_2 )( e^{(\lambda_- +1) t} )&\ll e^{\lambda_{-} t} \int_0^{t} 1\,\mathrm{d}r + e^{\lambda_{+} t} \int_t^{\infty}e^{-2\sqrt{\mathcal{D}} r} \mathrm{d}r \ll (1+t)\,e^{\lambda_- t},\\
      ( \mathcal{J}^{-}_2 +   \widetilde{\mathcal{J}}^{+}_2 )((1+t)\, e^{\lambda_-  t} )&\ll e^{\lambda_{-} t} \int_0^{t} (1+r)\,e^{-r}\,\mathrm{d}r + e^{\lambda_{+} t} \int_t^{\infty}(1+r)\,e^{-(2\sqrt{\mathcal{D}} +1)r} \mathrm{d}r  \ll (1+t)\,e^{\lambda_- t}.
\end{aligned}
     \end{equation*}
          Then we arrive at
          \begin{equation*}
                    \widetilde{\mathfrak{J}}_{\{2\}^{m}}(\Psi_1)(t)=\prod_{k=1}^{\ell} \left(\mathcal{J}^{-}_{2} + \widetilde{\mathcal{J}}^{+}_{2} \right)(e^{\lambda_+ t} + e^{\lambda_- t}) \ll_{m} (1+t)\,\max\{e^{(\lambda_+-m)t}, e^{\lambda_- t} \}.
          \end{equation*}

    Then we compute $\mathring{\mathfrak{J}}_{\{2\}^{m+1}}(\Psi_2)(t)$, where $0\leq m \leq \ell_+$. Since 
        \begin{equation*}
         \mathring{\mathcal{J}}^{+}_2 \left((1+t)\,e^{\lambda_+ t}\right) = \frac{\sqrt{2}e^{\lambda_{+} t}}{\sqrt{\mathcal{D}}}\int_0^{\infty} 
e^{-\lambda_{+} r}
e^{-r}\,(1+r)\,e^{\lambda_+ r} \mathrm{d}r \ll e^{\lambda_{+} t} \int_0^{\infty} (1+r)\,e^{-r}\mathrm{d}r \ll e^{\lambda_+ t},
     \end{equation*}
     we have 
      \begin{equation*}
         \mathring{\mathfrak{J}}_{\{2\}^{m+1}}(\Psi_2)(t) \ll \widetilde{\mathfrak{J}}_{\{2\}^{m}}\circ \mathring{\mathcal{J}}^{+}_2 \left((1+t)\,e^{\lambda_+ t}\right)
               \ll_{m} \widetilde{\mathfrak{J}}_{\{2\}^{m}}(e^{\lambda_+ t})
               \ll_{m}(1+t)\,\max\{e^{(\lambda_+-m)t}, e^{\lambda_- t} \} .
  \end{equation*}
  
  As for $\widetilde{\widetilde{\mathfrak{J}}}_{\{2\}^{m}}(\Psi_1)(t)$ and $\widetilde{\widetilde{\mathfrak{J}}}_{\{2\}^{m+1}}(\Psi_2)(t)$, where $\ell_++1\leq m \leq \ell$. Noticed that
  \begin{equation*}
      \left( \widetilde{\mathcal{J}}^{-}_{2} - \widetilde{\mathcal{J}}^{+}_{2} \right) \left((1+t)\,e^{(\lambda_- -j) t}\right) \ll (1+t)\,e^{(\lambda_- -j-1) t}.
  \end{equation*}
By similar computations as $\widetilde{\mathfrak{J}}_{\{2\}^{m}}(e^{\lambda_+ t})$, we have
     \begin{equation*}
         \widetilde{\mathfrak{J}}_{\{2\}^{\ell_+}}\left((1+t)\,e^{\lambda_+ t}\right) \ll (1+t)\,e^{\lambda_- t}.
     \end{equation*} 
Then we have
   \begin{equation*}
         \widetilde{\widetilde{\mathfrak{J}}}_{\{2\}^{m}}(\Psi_1)(t) = \prod_{k=1}^{m - \ell_+} \left( \widetilde{\mathcal{J}}^{-}_{2} - \widetilde{\mathcal{J}}^{+}_{2} \right) \circ \widetilde{\mathfrak{J}}_{\{2\}^{\ell_+}}(e^{\lambda_+ t}+ e^{\lambda_- t})
                \ll_{m}(1+t)\,e^{(\lambda_-(m-\ell_+))t},  
  \end{equation*}
and
   \begin{equation*}
         \widetilde{\widetilde{\mathfrak{J}}}_{\{2\}^{m+1}}(\Psi_2)(t) = \prod_{k=1}^{m+1 - \ell_+} \left( \widetilde{\mathcal{J}}^{-}_{2} - \widetilde{\mathcal{J}}^{+}_{2} \right) \circ \widetilde{\mathfrak{J}}_{\{2\}^{\ell_+}}\left((1+t)\,e^{\lambda_+ t}\right)
                \ll_{m}(1+t)\,e^{(\lambda_-(m+1-\ell_+))t}.
  \end{equation*}
  
Then for $\mathring{\widetilde{\mathfrak{J}}}_{\{2\}^{m+1}}(\Psi_2)(t)$, where $\ell_+\leq m \leq \ell$.  Noticed that
\begin{equation*}
         \mathring{\mathcal{J}}^{-}_2((1+t)\,e^{\lambda_- t}) = \frac{\sqrt{2}e^{\lambda_{-} t}}{\sqrt{\mathcal{D}}}\int_0^{\infty} 
e^{-\lambda_{-} r}
e^{-r}\,(1+r)\,e^{\lambda_- r} \mathrm{d}r \ll e^{\lambda_{-} t} \int_0^{\infty} (1+r)\,e^{-r}\mathrm{d}r \ll e^{\lambda_- t}.
     \end{equation*} 
And we arrive at
  \begin{equation*}
      \begin{aligned}
            \mathring{\widetilde{\mathfrak{J}}}_{\{2\}^{m+1}}(\Psi_2)(t) &\ll \mathring{\widetilde{\mathfrak{J}}}_{\{2\}^{m+1}}\left((1+t)\,e^{\lambda_+ t}\right) = \prod_{k=1}^{m - \ell_+} \left( \widetilde{\mathcal{J}}^{-}_{2} - \widetilde{\mathcal{J}}^{+}_{2} \right) \circ \mathring{\mathcal{J}}^{-}_{2} \circ  \widetilde{\mathfrak{J}}_{\{2\}^{\ell_+}}\left((1+t)\,e^{\lambda_+ t}\right) \\
                &\ll_{m} \prod_{k=1}^{m - \ell_+} \left( \widetilde{\mathcal{J}}^{-}_{2} - \widetilde{\mathcal{J}}^{+}_{2} \right) \left((1+t)\,e^{\lambda_- t}\right)\ll_{m}(1+t)\,e^{(\lambda_-(m-\ell_+))t}.
          \end{aligned}         
  \end{equation*}
  
Finally for $\widetilde{\mathring{\mathfrak{J}}}_{\{2\}^{m}}(\Psi_1)(t)$, where $\ell_++1\leq m \leq \ell$. We obtain
\begin{equation*}
      \begin{aligned}
            \widetilde{\mathring{\mathfrak{J}}}_{\{2\}^{m+1}}(\Psi_2)(t) &= \prod_{k=1}^{m - \ell_+} \left( \widetilde{\mathcal{J}}^{-}_{2} - \widetilde{\mathcal{J}}^{+}_{2} \right) \circ \mathring{\mathfrak{J}}_{\{2\}^{\ell_+ +1}}\left((1+t)\,e^{\lambda_+ t}\right) \\           &\ll_{m} \prod_{k=1}^{m - \ell_+} \left( \widetilde{\mathcal{J}}^{-}_{2} - \widetilde{\mathcal{J}}^{+}_{2} \right) \left((1+t)\,e^{\lambda_- t}\right)\ll_{m}(1+t)\,e^{(\lambda_- -(m-\ell_+))t}.
          \end{aligned}         
  \end{equation*}
\end{proof}
The main terms that are defined in Definition~\ref{definition:manymanypsiandtildetildeR} satisfy the following upper bounds.
\begin{lem}\label{lem:psi_real_first_part}
Recall  $q_0$ defined in Lemma~\ref{cor:uniform_ODE_bootstrap_sobolev_growth}. For any fixed $\mathcal{D} \in \left[\tfrac{1}{4},\infty\right)$, let $\ell > \ell_+$, we have
\begin{alignat*}{2}
\| \widetilde{\psi}_{m}(t, v, \varphi)\|_{W_{k,H}}
&\ll_{m}
(1+t)\,\max\{e^{(\lambda_+-m)t}, e^{\lambda_- t} \} \,
\|v\|_{W_{2\ell+2q_0+k+1,H}}\,
\|\varphi\|_{W_{\ell+q_0,1}},
&\qquad& \text{with } 1\leq m \leq \ell_+,
\\
\| \mathring{\psi}_{m}(t, v, \varphi)\|_{W_{k,H}}
&\ll_{m}
(1+t)\,\max\{e^{(\lambda_+-m)t}, e^{\lambda_- t} \} \,
\|v\|_{W_{2\ell+2q_0+k+2,H}}\,
\|\varphi\|_{W_{\ell+q_0+1,1}},
&\qquad& \text{with } 0\leq m \leq \ell_+,
\\
\| \widetilde{\widetilde{\psi}}_{m}(t, v ,\varphi) \|_{W_{k,H}}
&\ll_{m}
(1+t)\,e^{(\lambda_{+}+2  -m)t}
\|v\|_{W_{2\ell+2q_0+k+1,H}}\,
\|\varphi\|_{W_{\ell+q_0,1}},
&\qquad& \text{with } \ell_++1\leq m \leq \ell,
\\
\| \mathring{\widetilde{\psi}}_{m}(t,v,\varphi)\|_{W_{k,H}}
&\ll_{m}
(1+t)\,e^{(\lambda_{+}+2  -m)t}
\|v\|_{W_{2\ell+2q_0+k+2,H}}\,
\|\varphi\|_{W_{\ell+q_0+1,1}},
&\qquad& \text{with } \ell_+\leq m \leq \ell,
\\
\| \widetilde{\mathring{\psi}}_{m}(t, v ,\varphi) \|_{W_{k,H}}
&\ll_{m}
(1+t)\,e^{(\lambda_{+}+2  -m)t}
\|v\|_{W_{2\ell+2q_0+k+2,H}}\,
\|\varphi\|_{W_{\ell+q_0+1,1}},
&\qquad& \text{with } \ell_++1\leq m \leq \ell.
    \end{alignat*}
\end{lem}

\begin{proof}
Recall the expression of $\psi_0(t,v,\varphi)$ in~\eqref{defi:new_symbol_psi_R_53}. By the
$t=0$ case of Lemma~\ref{lem:sobolev_boundedness_of_I_explicit}, we have the following estimate that is uniform over $\mathcal{D}\geq \frac{1}{4}$:
\begin{equation*}
\| C^{\pm}(\mathcal{D},v,\varphi)\|_{W_{k,H}} \ll \|v\|_{W_{k+1,H}}\,\|\varphi\|_{L^1(K)},
\end{equation*}
then we obtain
          \begin{equation*}
               \| \psi_0(t,v,\varphi)\|_{W_{k,H}}= \|  C^{+}(\mathcal{D},v,\varphi)e^{\lambda_+ t}+ C^{-}(\mathcal{D},v,\varphi)e^{\lambda_- t}\|_{W_{k,H}} \ll( e^{\lambda_+ t}+ e^{\lambda_- t}) \|v\|_{W_{k+1,H}}\,\|\varphi\|_{L^1(K)}.
          \end{equation*}
          
          Recall the definitions of compositions of integral operators in~\eqref{defi:frak_J_in_twice_decomposition} and Definition~\ref{definition:manymanypsiandtildetildeR}. We begin with $\widetilde{\psi}_{m}(t, v, \varphi)$, for $1\leq m \leq \ell_+$.  By Lemma~\ref{lem:computation_of_frak_J_53}, we arrive at 
  \begin{equation*}
   \begin{aligned}
      \| \widetilde{\psi}_{m}(t, v, \varphi)\|_{W_{k,H}} &\ll_{m} \sum_{\vec{s} \in S_{m}} \sum_{\vec{\mathscr{A}}_{m} \in \{2,3\}^{m}}\| \widetilde{\psi}^{\vec{s}}_{\vec{\mathscr{A}}_{m}}(t) \|_{W_{k,H}} \\
      &\ll_{m} \widetilde{\mathfrak{J}}_{\{2\}^{m}}(e^{\lambda_+ t}+ e^{\lambda_- t}) \|v\|_{W_{2m+2q_0+k+1,H}}\,\|\varphi\|_{W_{m+q_0,1}}\\
         &\ll_{m} (1+t)\,\max\{e^{(\lambda_+-m)t}, e^{\lambda_- t} \} \, \|v\|_{W_{2m+2q_0+k+1,H}}\, \|\varphi\|_{W_{m+q_0,1}}.   
    \end{aligned}
  \end{equation*}

    Then we compute $\mathring{\psi}_{m}(t, v, \varphi)$, where $0\leq m \leq \ell_+$. By Lemma~\ref{cor:uniform_ODE_bootstrap_sobolev_growth}, and expression~\eqref{Equation:vector_G} of $G^{\vec{s}}_{\vec{\mathscr{A}}_{m}} (t)$, we have
\begin{equation*}
     \|G^{\vec{s}}_{\vec{\mathscr{A}}_{m}} (t)\|_{W_{k,H}} \ll (1+t)\,e^{\lambda_+ t} \|v\|_{W_{2m+2q_0+k+2,H}}\, \|\varphi\|_{W_{m+q_0+1,1}}.
\end{equation*}
By Lemma~\ref{lem:computation_of_frak_J_53}, we have
     \begin{equation*}
   \begin{aligned}
          \| \mathring{\psi}_{m}(t, v, \varphi)\|_{W_{k,H}} &\ll_{m} \sum_{\vec{s} \in S_{m}} \sum_{\vec{\mathscr{A}}_{m+1} \in \{2,3\}^{m} \times \{2\}} 
 \|\mathring{\psi}^{\vec{s}}_{\vec{\mathscr{A}}_{m+1}}(t, v ,\varphi) \|_{W_{k,H}} \\
          &\ll_{m} \mathring{\mathfrak{J}}_{\{2\}^{m+1}}\left((1+t)\,e^{\lambda_+ t}\right) \|v\|_{W_{2m+2q_0+k+2,H}}\,\|\varphi\|_{W_{m+q_0+1,1}} \\
               &\ll_{m}(1+t)\,\max\{e^{(\lambda_+-m)t}, e^{\lambda_- t} \} \, \|v\|_{W_{2m+2q_0+k+2,H}}\,\|\varphi\|_{W_{m+q_0+1,1}}.
   \end{aligned}
  \end{equation*}

As for $\widetilde{\widetilde{\psi}}_{m}(t, v ,\varphi)$ with
$\ell_++1\leq m \leq \ell$, similarly to the case of
$\widetilde{\psi}_{m}(t, v, \varphi)$, Lemma~\ref{lem:computation_of_frak_J_53} implies that
   \begin{equation*}
      \begin{aligned}
             \| \widetilde{\widetilde{\psi}}_{m}(t, v, \varphi)\|_{W_{k,H}} &\ll_{m} \widetilde{\widetilde{\mathfrak{J}}}_{\{2\}^{m}}(e^{\lambda_+ t}+ e^{\lambda_- t}) \|v\|_{W_{2m+2q_0+k+1,H}}\,\|\varphi\|_{W_{m+q_0,1}} \\
               &\ll_{m}(1+t)\,e^{(\lambda_- -(m-\ell_+))t} \, \|v\|_{W_{2m+2q_0+k+1,H}}\,\|\varphi\|_{W_{m+q_0,1}}.
          \end{aligned}         
  \end{equation*}
  
Then for $\mathring{\widetilde{\psi}}_{m}(t,v,\varphi)$ with
$\ell_+\leq m \leq \ell$, similarly to the case of
$\mathring{\psi}_{m}(t, v, \varphi)$, Lemma~\ref{lem:computation_of_frak_J_53} yields
  \begin{equation*}
      \begin{aligned}
             \| \mathring{\widetilde{\psi}}_{m}(t, v, \varphi)\|_{W_{k,H}} &\ll_{m} \mathring{\widetilde{\mathfrak{J}}}_{\{2\}^{m+1}}\left((1+t)\,e^{\lambda_+ t}\right) \|v\|_{W_{2m+2q_0+k+2,H}}\,\|\varphi\|_{W_{m+q_0+1,1}} \\
               &\ll_{m}(1+t)\,e^{(\lambda_- -(m-\ell_+))t} \, \|v\|_{W_{2m+2q_0+k+2,H}}\,\|\varphi\|_{W_{m+q_0+1,1}}.
          \end{aligned}         
  \end{equation*}
  
Finally for $\widetilde{\mathring{\psi}}_{m}(t, v ,\varphi)$ with
$\ell_++1\leq m \leq \ell$, similarly to the case of
$\mathring{\psi}_{m}(t, v, \varphi)$, Lemma~\ref{lem:computation_of_frak_J_53} implies that
   \begin{equation*}
      \begin{aligned}
             \| \widetilde{\mathring{\psi}}_{m}(t, v, \varphi)\|_{W_{k,H}} &\ll_{m} \widetilde{\mathring{\mathfrak{J}}}_{\{2\}^{m}}\left((1+t)\,e^{\lambda_+ t}\right) \|v\|_{W_{2m+2q_0+k+2,H}}\,\|\varphi\|_{W_{m+q_0+1,1}} \\
               &\ll_{m}(1+t)\,e^{(\lambda_- -(m-\ell_+))t} \, \|v\|_{W_{2m+2q_0+k+2,H}}\,\|\varphi\|_{W_{m+q_0+1,1}}.
          \end{aligned}         
  \end{equation*}
\end{proof}

And the remainder terms have the following upper bounds.

\begin{lem}\label{lem:R_real_first_part}
Recall  $q_0$ defined in Lemma~\ref{cor:uniform_ODE_bootstrap_sobolev_growth}. For any fixed $\mathcal{D} \in \left[\tfrac{1}{4},\infty\right)$, let $\ell > \ell_+$, we have 
\begin{align*}
   \| \widetilde{\widetilde{\mathcal{R}}}_{\ell}(t,v,\varphi)\|_{W_{k,H}} \ll_{\ell}  (1+t)\,e^{(\lambda_{+} +1 - \ell)t} \|v\|_{W_{2\ell +2q_0+k+2,H}}\, \|\varphi\|_{W_{\ell +q_0+1,1}}.
\end{align*}
\end{lem}

\begin{proof}
Recall the expression~\eqref{equation:tildetildeR_l} of $\widetilde{\widetilde{\mathcal{R}}}_{\ell}(t,v,\varphi)$, by Lemma~\ref{lem:computation_of_frak_J_53}, we have 
\begin{equation*}
\begin{aligned}
         \| \widetilde{\widetilde{\mathcal{R}}}_{\ell}(t,v,\varphi)\|_{W_{k,H}} &\ll_{\ell} \sum_{\vec{s} \in S_{\ell}} \sum_{\vec{\mathscr{A}}_{\ell+1} \in  \{2,3\}^{\ell} \times \{2\}}
 \|\widetilde{\widetilde{\mathcal{R}}}^{\vec{s}}_{\vec{\mathscr{A}}_{\ell+1}}(t,v,\varphi)\|_{W_{k,H}}\\
    &\ll_{\ell} | \widetilde{\widetilde{\mathfrak{J}}}_{\{2\}^{\ell+1}}\left((1+t)\,e^{\lambda_+ t}\right)| \,
\|v\|_{W_{2\ell+2q_0+k+2,H}}\,
\|\varphi\|_{W_{\ell+q_0+1,1}}\\
 &\ll_{\ell}  (1+t)\,e^{(\lambda_- - (\ell+1-\ell_+))t}\,
\|v\|_{W_{2\ell+2q_0+k+2,H}}\,
\|\varphi\|_{W_{\ell+q_0+1,1}}.
\end{aligned}
\end{equation*}
\end{proof}

\subsection{Matrix Coefficient Filtering and Refined Iteration for \texorpdfstring{$\sqrt{\mathcal D}>\nu(\pi)$}{sqrt(D) > nu(pi)}}\label{subsection:Third_real_part}
The goal of this section is to provide a refined version of the argument in Section~\ref{section:twice_decomposition} for the regime $\sqrt{\mathcal D}>\nu(\pi)$, where $\nu(\pi)$ denotes the spectral gap parameter associated to a unitary representation $\pi$ of $G$. 

In this regime, a new difficulty arises, since the term $e^{\lambda_{+}t}$ may become exponentially growing. Consequently, additional boundary conditions are required in order to control the corresponding ODE system. Fortunately, since $I(t, v, \varphi)$ arises from a unitary representation, it satisfies additional representation-theoretic constraints, most notably the matrix coefficient decay estimate from Theorem~\ref{thm:matrix-coeff-decay-eps-form}.

Observe that the main terms $\psi_{m}$, $\widetilde{\psi}_{m}$, $\mathring{\psi}_{m}$, $\widetilde{\mathring{\psi}}_{m}$, $\widetilde{\widetilde{\psi}}_{m}$, and $\mathring{\widetilde{\psi}}_{m}$ are all of the form $e^{\lambda_{\pm}t} P_{2m}(e^{-t})$ where $P_{2m}(\cdot)$ is a polynomial of order no more than $2m$. Moreover, in the half-integer regime where $2\sqrt{\mathcal{D}} \in \mathbb{Z}$, the iteration scheme itself remains unchanged, but the detailed computations differ
\begin{equation*}
    \int_0^t e^{(2\sqrt{\mathcal{D}}-m)t} = t+1,\quad \text{if}\ 2\sqrt{\mathcal{D}}=m.
\end{equation*}

Propositions~\ref{Prop:Refined_Asymptotic} and~\ref{prop:Full_asymptotic_of_ODE _real_part} imply that
$I(t, v, \varphi)$ can be rewritten in the form
\begin{equation}\label{equation:informal_expansion_I}
I(t,v,\varphi)
=
\sum_{m=0}^{2\ell} C_m^{+}(v,\varphi)e^{(\lambda_{+}-m)t}
+
\sum_{m=0}^{2\ell}
\bigl(
C_m^{-}(v,\varphi)
+
t\,C_m^{P}(v,\varphi)
\bigr)e^{(\lambda_{-}-m)t}
+
\widetilde{\widetilde{\mathcal{R}}}_\ell(t,v,\varphi).
\end{equation}
The detailed discussion about such expansion will be in the next Section~\ref{section:asymptotic_expansion}.
Note that $v$ lies in an irreducible representation associated with the spectral parameters $\mu$ and $\upvarpi$, which determine the discriminant $\mathcal{D}$ of the ODE defined in Section~\ref{section:solution_ODE}. Consequently, the coefficients $C^{\pm}_{m}(v,\varphi)$ depend implicitly on the parameter $\mathcal{D}$.

For $\ell_+=\left\lfloor \lambda_{+}-\lambda_- \right\rfloor + 2$, 
\begin{equation}\label{equation:Claim_of_R}
    \|\widetilde{\widetilde{\mathcal{R}}}_\ell(t,v,\varphi)\|_{W_{k,H}} \ll_{\ell, v, \varphi} (1+t)\,e^{(\lambda_- -(\ell+1-\ell_+))t}, \quad \text{for} \ \ell>\ell_+.
\end{equation}

Moreover, one can notice that when $\sqrt{\mathcal D}>\nu(\pi)$, the exponential terms $e^{(\lambda_{+}-m)t}$ appearing in this expansion~\eqref{equation:informal_expansion_I} do not automatically agree with the decay rate predicted by the Matrix Coefficient Theorem~\ref{thm:matrix-coeff-decay-eps-form}. Therefore we need to analyze more carefully.

To address this issue, we divide the analysis according to the size of $\sqrt{\mathcal D}$:
\begin{equation*}
\sqrt{\mathcal D}\in\left[\tfrac{1}{4},\nu(\pi)\right],
\qquad 
\sqrt{\mathcal D}\in\left(\nu(\pi),\infty\right).
\end{equation*}
In particular, if $\nu(\pi)<\tfrac{1}{4}$, one may replace $\tfrac{1}{4}$ by $\nu(\pi)$ as the boundary separating single and double decomposition as discussed in Section~\ref{subsection:Construction_of_the_iteration}. In this situation, the range $\sqrt{\mathcal D}\in\left[\tfrac{1}{4},\nu(\pi)\right]$ becomes empty, which does not affect the analysis.

 This observation leads to the following filtering lemma, based on the Matrix Coefficient Theorem~\ref{thm:matrix-coeff-decay-eps-form}.

\begin{lem}[Filtering of $\lambda_{+}$–terms by matrix coefficient decay]
\label{lem:filtering_by_matrix_coeff_decay}
For  the expansion of $I(t, v, \varphi)$ in~\eqref{equation:informal_expansion_I}, we have
\begin{equation*}
C_m^{+}(v,\varphi)=0,\
\text{whenever }
\lambda_{+}-m>\frac{1-n}{2}+\nu(\pi) .
\end{equation*}
\end{lem}
\begin{proof}
Since $v,w\in C^\infty(X)$ are smooth vectors, they need not be $K$-finite.
However, by the standard approximation argument using
$K$-finite vectors,
the same matrix coefficient estimate in Theorem~\ref{thm:matrix-coeff-decay-eps-form} extends to all
smooth vectors.
Therefore, there exists $q_{v,w}\ge 0$ such that
\begin{equation}
\label{eq:matrix_coeff_decay_filter}
|\langle I(t,v,\varphi),w\rangle|
\ll_{v,w,\varphi}
(1+t)^{q_{v,w}} e^{(\frac{1-n}{2}+\nu(\pi)) t}.
\end{equation}

For any $w\in C^\infty(X)$,
we compute
\begin{align*}
\langle I(t,v,\varphi),w\rangle
&=
\left\langle \int_K \varphi(k)\pi(ka_t)v\,dk , w \right\rangle \\
&=
\int_K \varphi(k)\,
\langle \pi(a_t)v,\pi(k^{-1})w\rangle\,dk
=
\langle \pi(a_t)v, \int_K \overline{\varphi(k)}\,\pi(k^{-1})w\,dk \rangle .
\end{align*}

Assume, for contradiction, that there exists $m$ with $\lambda_{+}-m>\frac{1-n}{2}+\nu(\pi)$ and $C_m^{+}(v,\varphi)\neq0 .$
Choose $m_0$ such that
\begin{equation*}
\lambda_{+}-m_0
=
\max\Bigl\{
\lambda_{+}-m:
C_m^{+}(v,\varphi)\neq0,\
\lambda_{+}-m>\frac{1-n}{2}+\nu(\pi)
\Bigr\}.
\end{equation*}
Pair the expansion with $w=C_{m_0}^{+}(v,\varphi).$
By the explicit construction of the coefficients for a smooth joint eigenvector
$v$, the vector $C_{m_0}^{+}(v,\varphi)$ is smooth.
The $m_0$–term contributes $\|C_{m_0}^{+}(v,\varphi)\|_{H}^2\, e^{(\lambda_{+}-m_0)t}.$

All other $\lambda_{+}$–terms have strictly smaller real exponential rate,
while every $\lambda_{-}$–term satisfies $\lambda_{-}-m\le\frac{1-n}{2}+\nu(\pi)$
and therefore decays strictly faster than $e^{(\lambda_{+}-m_0)t}$.
Since $\ell>\ell_+$, we have $\lambda_- + \ell_+ -\ell\le \frac{1-n}{2}+\nu(\pi) .$
By \eqref{equation:Claim_of_R}, the remainder term satisfies
\begin{equation*}
\widetilde{\widetilde{\mathcal{R}}}_\ell(t,v,\varphi)
=
o\!\left(e^{(\lambda_{+}-m_0)t}\right).
\end{equation*}
Hence
\begin{equation*}
\langle I(t,v,\varphi),
C_{m_0}^{+}(v,\varphi)\rangle
=
\|C_{m_0}^{+}(v,\varphi)\|_{H}^2\, e^{(\lambda_{+}-m_0)t}
+
o\!\left(e^{(\lambda_{+}-m_0)t}\right).
\end{equation*}
For sufficiently large $t$ this implies
\begin{equation*}
|\langle I(t,v,\varphi),
C_{m_0}^{+}(v,\varphi)\rangle|
\ge
\frac12
\|C_{m_0}^{+}(v,\varphi)\|_{H}^2\,
e^{(\lambda_{+}-m_0)t},
\end{equation*}
which contradicts \eqref{eq:matrix_coeff_decay_filter} since
$\lambda_{+}-m_0>\frac{1-n}{2}+\nu(\pi)$.

Therefore
\begin{equation*}
C_m^{+}(v,\varphi)=0,\
\text{whenever }
\lambda_{+}-m>\frac{1-n}{2}+\nu(\pi) .
\end{equation*}
\end{proof}
Especially for $\sqrt{\mathcal D}\in\left(\nu(\pi),\infty\right)$, one has
$\lambda_+ > \frac{1-n}{2}+\nu(\pi)$.
Moreover, the discussion following~\eqref{eq:first_refined_iteration_scheme} shows that no new terms $e^{\lambda_+ t}$ arise during the subsequent iteration process. Therefore, the term $e^{\lambda_+ t}$ can be eliminated prior to the iteration, which leads to the following refined iteration scheme.

Now we rewrite $I(t, v, \varphi)$ in~\eqref{eq:first_refined_iteration_scheme} as
\begin{equation*}
I(t, v, \varphi) = e^{\lambda_{-}t} C^{-}(\mathcal{D},v,\varphi) + e^{\lambda_{+}t} C^{+}(\mathcal{D},v,\varphi) +\widetilde{\mathcal{R}}_0(t,v,\varphi),
\end{equation*}
where the coefficients are defined using modified notations,
\begin{equation*}
\begin{cases}
C^{-}(\mathcal{D},v,\varphi) = \frac{\lambda_{+} I(0,v,\varphi)-I(0,d\pi(a_{1})v,\varphi)}{2\sqrt{\mathcal{D}}}, \\
C^{+}(\mathcal{D},v,\varphi) = \frac{I(0,d\pi(a_{1})v,\varphi)- \lambda_{-} I(0,v,\varphi)}{2\sqrt{\mathcal{D}}}+\frac{1}{\sqrt{\mathcal{D}}} \int_{0}^{\infty} e^{(-1-\lambda_{+})r} G_{v, \varphi}(r) \,\mathrm{d}r.
\end{cases}
\end{equation*}
The remainder term is then given by
\begin{align*}
\widetilde{\mathcal{R}}_0(t,v,\varphi) &= -\frac{1}{\sqrt{\mathcal{D}}} \left( e^{\lambda_{-}t} \int_{0}^{t} e^{(-1-\lambda_{-})r} G_{v, \varphi}(r) \,\mathrm{d}r + e^{\lambda_{+}t} \int_{t}^{\infty} e^{(-1-\lambda_{+})r} G_{v, \varphi}(r) \,\mathrm{d}r\right).
\end{align*}
By Lemma~\ref{cor:uniform_ODE_bootstrap_sobolev_growth}, $\widetilde{\mathcal{R}}_0(t,v,\varphi)$ satisfies the upper bound 
\begin{align*}
  \| \widetilde{\mathcal{R}}_0(t,v,\varphi)\|_{W_{k,H}} \ll_{v,\varphi} (1+t)\,(e^{\lambda_- t} + e^{(\lambda_+-1)t}).
\end{align*}
By Lemma~\ref{lem:filtering_by_matrix_coeff_decay}, we have
$C^{+}(\mathcal{D},v,\varphi)=0$. Therefore,
\begin{equation}\label{equation:D_large_I}
I(t, v, \varphi) = \psi_{0}(t, v ,\varphi) +\widetilde{\mathcal{R}}_0(t,v,\varphi), \qquad \text{where}\ \psi_{0}(t, v ,\varphi) := e^{\lambda_{-}t} C^{-}(\mathcal{D},v,\varphi).
\end{equation}

In Lemma~\ref{cor:uniform_ODE_bootstrap_sobolev_growth}, we proved an estimate of the Sobolev norm of $I(t,v,\varphi)$ using the general solution~\eqref{eq:kernel_solution_all_D} of the ODE to impose additional constraints, valid for all $\mathcal{D}$. This yields the upper bound $e^{\max(\Re (\lambda_{+}),\, k-q)t}$. 
However, when $\sqrt{\mathcal{D}}> k+\frac{n-1}{2}$, the contribution of the factor $e^{\lambda_{+} t}$ produces a weaker estimate for $I(t, v, \varphi)$ in Lemma~\ref{cor:uniform_ODE_bootstrap_sobolev_growth} than the bound obtained in Lemma~\ref{lem:sobolev_boundedness_of_I_explicit}, since in this regime one has $e^{kt} < e^{\lambda_{+} t}$.

Consequently, one must adapt the same argument of Lemma~\ref{cor:uniform_ODE_bootstrap_sobolev_growth} by replacing the expression~\eqref{eq:kernel_solution_all_D} with the alternative expression~\eqref{equation:D_large_I}.

\begin{coro}\label{coro:refined_estimation_of_G}
   Assume the same conditions as in Lemma~\ref{cor:uniform_ODE_bootstrap_sobolev_growth}. For $\sqrt{\mathcal D}\in\left(\nu(\pi),\infty\right)$, then for any fixed $p\in\mathbb Z_{\geq 0}$, define
\begin{equation*}
\gamma_p:=\max\{\lambda_-, k-p\}.
\end{equation*}
   we obtain
\begin{equation*}
\|I(t,v,\varphi)\|_{W_{k,H}}
\ll e^{\gamma_p t}\,
\|v\|_{W_{k+2p,H}}
\|\varphi\|_{W_{p,1}}.
\end{equation*}
Especially for $p=q_0$, we have
\begin{equation*}
\|G_{v,\varphi}(t)\|_{W_{k,H}}
\ll \min\{e^{\lambda_+ t}, 1\}\, \|v\|_{W_{2q_0+k,H}}\, \|\varphi\|_{W_{q_0,1}}.
\end{equation*}
\end{coro}
Now we have
\begin{equation*}
      \| \widetilde{\mathcal{R}}_0(t,v,\varphi)\|_{W_{k,H}} \ll_{v,\varphi} e^{\lambda_- t} + \min\{e^{(\lambda_+ -1) t}, e^{-t}\}.
\end{equation*}
Hence we modify the definition of $\ell_+$ into
\begin{equation*}
    \ell_+ = \left\lfloor - \lambda_{-} \right\rfloor +1.
\end{equation*}

\begin{coro}\label{coro:computation_frak_J_532}
    By analogue with Lemma~\ref{lem:computation_of_frak_J_53}. For any $\mathcal{D} \in (\nu(\pi), \infty)$, let $\ell > \ell_+= \left\lfloor - \lambda_{-} \right\rfloor +1$. For any bounded positive functions $\Psi_1, \Psi_2: \mathbb{R} \to \mathbb{R}$ satisfy $\Psi_1 = e^{\lambda_- t}$, $\Psi_2(t) \ll \min\{e^{\lambda_+ t}, 1\}$, we have
\begin{alignat*}{2}
 \widetilde{\mathfrak{J}}_{\{2\}^{m}}(\Psi_1)(t)
&\ll_{m}
e^{\lambda_- t},
&\qquad& \text{with } 1\leq m \leq \ell_+,
\\
 \widetilde{\mathfrak{J}}_{\{2\}^{m+1}}(\Psi_2)(t)
&\ll_{m}
(1+t)\,\min\{e^{(\lambda_+ -m) t}, e^{-m t}\},
&\qquad& \text{with } 0\leq m \leq \ell_+,
\\
\widetilde{\widetilde{\mathfrak{J}}}_{\{2\}^{m}}(\Psi_1)(t)
&\ll_{m}
e^{-(m-1)t},
&\qquad& \text{with } \ell_++1\leq m \leq \ell,
\\
\widetilde{\widetilde{\mathfrak{J}}}_{\{2\}^{m+1}}(\Psi_2)(t)
&\ll_{m}
(1+t)\,\min\{e^{(\lambda_+ -m) t}, e^{-m t}\},
&\qquad& \text{with } \ell_++1\leq m \leq \ell,
\\
\mathring{\widetilde{\mathfrak{J}}}_{\{2\}^{m+1}}(\Psi_2)(t)
&\ll_{m}
(1+t)\, e^{-(m-1) t},
&\qquad& \text{with } \ell_+\leq m \leq \ell.
  \end{alignat*}
\end{coro}

An important observation is that the threshold parameter $\ell_{+}$, which separates the first and second stages of the double decomposition, depends on $\mathcal{D}$. In the regime $\sqrt{\mathcal{D}} > \nu(\pi)$, if $\mathcal{D}$ is sufficiently large, then for a fixed number $\ell$ of iteration steps, the iteration process never reaches the threshold $\ell_{+}$. 

 As a consequence, the remainder term remains $\widetilde{\mathcal{R}}_{\ell}$ from the first stage of the decomposition. Nevertheless, we will show that $\widetilde{\mathcal{R}}_{\ell}$ still satisfies sufficiently strong decay estimates.

\begin{prop}\label{propositon:doubledecomposition_Dbig}
For any fixed finite number of iterations $\ell$ and any fixed spectral data satisfying $\sqrt{\mathcal{D}}>\nu(\pi)$, if $1 \leq \ell \leq \ell_+$, then the asymptotic expansion of the integral $I(t, v, \varphi)$ admits the form
\begin{equation*}
I(t, v, \varphi) 
=\psi_0(t, v, \varphi)+ \sum_{m=1}^{\ell} \widetilde{\psi}_{m}(t, v, \varphi)
+\widetilde{\mathcal{R}}_{\ell}(t,v,\varphi),
\end{equation*}
where the remainder term satisfies 
 \begin{align*}
         \|\widetilde{\mathcal{R}}_{\ell}(t,v,\varphi)\|_{W_{k,H}} (1+t)\, \ll_{\ell} (1+t)\,\min\{e^{(\lambda_+ -\ell) t}, e^{-\ell t}\}\, \|v\|_{W_{2\ell + 2q_0+k+2,H}}\, \|\varphi\|_{W_{\ell +q_0+1,1}}.
    \end{align*}
If $\ell >\ell_+$, the asymptotic expansion of the integral $I(t, v, \varphi)$ takes the form
\begin{equation*}
I(t, v, \varphi) 
= \psi_0(t, v, \varphi)+ \sum_{m=1}^{\ell_+} \widetilde{\psi}_{m}(t, v, \varphi)
+ \sum_{m =\ell_+ + 1}^{\ell} \widetilde{\widetilde{\psi}}_{m}(t, v ,\varphi)  + \sum_{m = \ell_+}^{\ell} \mathring{\widetilde{\psi}}_{m}(t,v,\varphi)
+ \widetilde{\widetilde{\mathcal{R}}}_{\ell}(t,v,\varphi),
\end{equation*}
where the remainder term satisfies 
 \begin{align*}
         \| \widetilde{\widetilde{\mathcal{R}}}_{\ell}(t,v,\varphi)\|_{W_{k,H}}  \ll_{\ell} (1+t)\,\min\{e^{(\lambda_+ -\ell) t}, e^{-\ell t}\}\, \|v\|_{W_{2\ell + 2q_0+k+2,H}}\, \|\varphi\|_{W_{\ell +q_0+1,1}}.
    \end{align*}
\end{prop}

\begin{proof}
By Corollary~\ref{coro:refined_estimation_of_G}, and expression~\eqref{Equation:vector_G} of $G^{\vec{s}}_{\vec{\mathscr{A}}_{\ell}} (t)$, we have
\begin{equation*}
     \|G^{\vec{s}}_{\vec{\mathscr{A}}_{\ell}} (t)\|_{W_{k,H}} \ll \min\{e^{\lambda_+ t}, 1\}\, \|v\|_{W_{2\ell+2q_0+k+2,H}}\, \|\varphi\|_{W_{\ell+q_0+1,1}}.
\end{equation*}
By Corollary~\ref{coro:computation_frak_J_532}, for $\ell \leq \ell_+$, 
\begin{align*}
 \|\widetilde{\mathcal{R}}_{\ell}(t, v,\varphi) \|_{W_{k,H}} &\ll_{\ell}\ |\widetilde{\mathfrak{J}}_{\{2\}^{\ell+1}}(\min\{e^{\lambda_+ t}, 1\})| \, \|v\|_{W_{2\ell+2q_0 + k+2,H}}\left\| \varphi\right\|_{W_{\ell+q_0+1,1}}\\
 &\ll_{\ell}(1+t)\,\min\{e^{(\lambda_+ -\ell) t}, e^{-\ell t}\}\, \|v\|_{W_{2\ell +2q_0+k+2,H}}\left\| \varphi\right\|_{W_{\ell+q_0+1,1}},
\end{align*}  
and for $\ell > \ell_+$, 
 \begin{align*}
         \| \widetilde{\widetilde{\mathcal{R}}}_{\ell}(t,v,\varphi)\|_{W_{k,H}}  &\ll_{\ell}\ |\widetilde{\widetilde{\mathfrak{J}}}_{\{2\}^{\ell+1}}(\min\{e^{\lambda_+ t}, 1\})| \, \|v\|_{W_{2\ell+2q_0 + k+2,H}}\left\| \varphi\right\|_{W_{\ell+q_0+1,1}}\\
         &\ll_{\ell}(1+t)\,\min\{e^{(\lambda_+ -\ell) t}, e^{-\ell t}\}\,\|v\|_{W_{2\ell +2q_0+k+2,H}}\, \|\varphi\|_{W_{\ell +q_0+1,1}}.
    \end{align*}
\end{proof}
By the same argument as in Lemma~\ref{lem:psi_real_first_part}, together with Corollary~\ref{coro:computation_frak_J_532}, we obtain the following result.

\begin{lem}\label{lem:psi_large_D}
Recall  $q_0$ defined in Lemma~\ref{cor:uniform_ODE_bootstrap_sobolev_growth}. For any $\mathcal{D} \in (\nu(\pi), \infty)$ and any $\ell > \ell_+$, we have
\begin{alignat*}{2}
\| \widetilde{\psi}_{m}(t, v, \varphi)\|_{W_{k,H}}
&\ll_{m}
e^{\lambda_- t}\,
\|v\|_{W_{2\ell+2q_0+k+1,H}}\,
\|\varphi\|_{W_{\ell+q_0,1}},
&\qquad& \text{with } 1\leq m \leq \ell_+,
\\
\| \widetilde{\widetilde{\psi}}_{m}(t, v ,\varphi) \|_{W_{k,H}}
&\ll_{m}
e^{-(m-1)t}
\|v\|_{W_{2\ell+2q_0+k+1,H}}\,
\|\varphi\|_{W_{\ell+q_0,1}},
&\qquad& \text{with } \ell_++1\leq m \leq \ell,
\\
\| \mathring{\widetilde{\psi}}_{m}(t,v,\varphi)\|_{W_{k,H}}
&\ll_{m}
(1+t)\,e^{ -(m-1)t}
\|v\|_{W_{2\ell+2q_0+k+2,H}}\,
\|\varphi\|_{W_{\ell+q_0+1,1}},
&\qquad& \text{with } \ell_+\leq m \leq \ell.
 \end{alignat*}   
\end{lem}

\subsection{Sobolev Regularity of the Asymptotic Expansion}\label{section:asymptotic_expansion}
For each fixed spectral datum $\mathcal{D}$, the asymptotic expansion of $I(t,v,\varphi)$ has already been obtained in Proposition~\ref{Prop:Refined_Asymptotic}, Proposition~\ref{prop:Full_asymptotic_of_ODE _real_part}, and Proposition~\ref{propositon:doubledecomposition_Dbig}, according to the corresponding range of $\mathcal{D}$. We now collect the main terms arising from these three cases and rewrite them in a unified form.
Since these propositions correspond to different spectral regimes, some of the terms below may be absent in certain cases; by convention, such terms are understood to be identically zero. 
\begin{defi}\label{defi:Psi_joint_engenvector}
For any fixed $\ell>\frac{n-1}{2}+\nu(\pi)$, 
We define the total main contribution by
\begin{equation*}
\begin{aligned}
\Psi_{\ell}(t,v,\varphi)
=\;
\psi_{0}(t,v,\varphi)
+
\sum_{m=0}^{2\ell}
\Bigl(&
\widetilde{\psi}_{m}(t,v,\varphi)
+
\mathring{\psi}_{m}(t,v,\varphi)\\
&+
\widetilde{\widetilde{\psi}}_{m}(t,v,\varphi)
+
\widetilde{\mathring{\psi}}_{m}(t, v ,\varphi)
+
\mathring{\widetilde{\psi}}_{m}(t,v,\varphi)
\Bigr).
\end{aligned}
\end{equation*}
\end{defi}

And we have the following uniform upper bound of the main terms $ \Psi_{\ell}(t,v,\varphi) $.
\begin{lem}\label{lem:uniform_upper_bound_of_psi}
Recall  $q_0$ defined in Lemma~\ref{cor:uniform_ODE_bootstrap_sobolev_growth}. For any $\mathcal{D} \in \mathbb{R}$, any fixed $\ell>\frac{n-1}{2}+\nu(\pi)$, we have
\begin{equation*}
     \|\Psi_{\ell}(t,v,\varphi) \|_{W_{k,H}}
\ll_{\ell} (1+t)\,e^{(\frac{1-n}{2}+\nu(\pi))t}\,
\|v\|_{W_{2\ell+2q_0+k+2,H}}\,
\|\varphi\|_{W_{\ell+q_0+1,1}}.
\end{equation*}
\end{lem}
\begin{proof}
    Combining Lemma~\ref{coro:Convergence_of_psi}, ~\ref{lem:psi_real_first_part} and~\ref{lem:psi_large_D}, for any $\mathcal{D} \in \mathbb{R}$, if $\ell>\ell_+=\left\lfloor - \lambda_{-} \right\rfloor +1, $ we have
\begin{align*}
\|\psi_{0}(t,v,\varphi)\|_{W_{k,H}}
\ll\ \ &(1+t)\,e^{(\frac{1-n}{2}+\nu(\Gamma))t}\,
\|v\|_{W_{2\ell+2q_0+k+2,H}}\,
\|\varphi\|_{W_{\ell+q_0+1,1}},\\
\|\overset{\star}{\psi}_{m}(t,v,\varphi)\|_{W_{k,H}}
\ll_{m} &(1+t)\,e^{(\frac{1-n}{2}+\nu(\Gamma))t}\,
\|v\|_{W_{2\ell+2q_0+k+2,H}}\,
\|\varphi\|_{W_{\ell+q_0+1,1}},
\end{align*}
where $\overset{\star}{\psi}_{m}$ denotes any of the terms $\widetilde{\psi}_{m},
\mathring{\psi}_{m},
\widetilde{\widetilde{\psi}}_{m},
\mathring{\widetilde{\psi}}_{m},$ and $\widetilde{\mathring{\psi}}_{m},$
whenever the corresponding term is defined in its admissible range of indices.
   
    Especially for $\sqrt{\mathcal{D}} > \nu(\pi)$, and $1\leq \ell \leq \ell_+$, as in Lemma~\ref{lem:psi_large_D},
\begin{equation*}
\| \widetilde{\psi}_{m}(t, v, \varphi)\|_{W_{k,H}}
\ll_{m} 
e^{\lambda_- t}\,
\|v\|_{W_{2\ell+2q_0+k+2,H}}\,
\|\varphi\|_{W_{\ell+q_0+1,1}},
\qquad \text{with } 1\leq m \leq \ell_+.
\end{equation*}
Therefore, for any $\mathcal{D}\in \mathbb{R}, \ell>\frac{n-1}{2}+\nu(\pi)$, in the $\ell$-step iterations, all $\Psi_{\ell}(t,v,\varphi) $ satisfy:
\begin{equation*}
     \|\Psi_{\ell}(t,v,\varphi) \|_{W_{k,H}}
\ll_{\ell} (1+t)\,e^{(\frac{1-n}{2}+\nu(\pi))t}\,
\|v\|_{W_{2\ell+2q_0+k+2,H}}\,
\|\varphi\|_{W_{\ell+q_0+1,1}}.
\end{equation*}
\end{proof}

For any $\mathcal{D}\in \mathbb{R}, \ell>\frac{n-1}{2}+\nu(\pi)$, two types of remainder terms arise: the term $\widetilde{\mathcal{R}}_{\ell}$ obtained in Section~\ref{section:Once_decomposition}, and the term $\widetilde{\widetilde{\mathcal{R}}}_{\ell}$ obtained in Section~\ref{section:twice_decomposition}. And in Section~\ref{subsection:Third_real_part}, as discussed in Proposition~\ref{propositon:doubledecomposition_Dbig}, both types of remainder terms may appear depending on the spectral parameter $\mathcal{D}$.

For simplicity of notation, we denote by $\mathcal{R}_{\ell}(t, v, \varphi)$ the final remainder term corresponding to a given $\mathcal{D}$, regardless of the stage of the iteration from which it arises. The remainder term $\mathcal{R}_{\ell}(t, v, \varphi)$ satisfies the following uniform upper bounds.
\begin{lem}\label{lem:uniform_upper_bound_R}
    Recall  $q_0$ defined in Lemma~\ref{cor:uniform_ODE_bootstrap_sobolev_growth}. For any $\mathcal{D}\in \mathbb{R}$, and any fixed $\ell>\frac{n-1}{2}+\nu(\pi)$, we have
    \begin{equation*}
\|\mathcal{R}_{\ell}(t,v,\varphi)\|_{W_{k,H}}
\ll_{\ell}
(1+t)\,e^{(1-\ell)t}\,
\|v\|_{W_{2\ell+2q_0+k+2,H}}\,
\|\varphi\|_{W_{\ell+q_0+1,1}}.
\end{equation*}
\end{lem}
\begin{proof}
For any $\mathcal{D} \in \mathbb{R}$, if $\ell>\ell_+=\left\lfloor - \lambda_{-} \right\rfloor +1, $ we have
\begin{equation*}
\|\mathcal{R}_{\ell}(t,v,\varphi)\|_{W_{k,H}}
\ll_{\ell}
(1+t)\,e^{(1-\ell)t}\,
\|v\|_{W_{2\ell+2q_0+k+2,H}}\,
\|\varphi\|_{W_{\ell+q_0+1,1}}.
\end{equation*}
Especially for $\sqrt{\mathcal{D}} > \nu(\pi)$, and $1\leq \ell \leq \ell_+$, as in Proposition~\ref{propositon:doubledecomposition_Dbig},
\begin{equation*}
\|\mathcal{R}_{\ell}(t,v,\varphi)\|_{W_{k,H}}
\ll_{\ell}
(1+t)\,e^{(1-\ell)t}\,
\|v\|_{W_{2\ell+2q_0+k+2,H}}\,
\|\varphi\|_{W_{\ell+q_0+1,1}}.
\end{equation*}
Therefore, for any $\mathcal{D} \in \mathbb{R}, \ell>\frac{n-1}{2}+\nu(\pi)$, in the $\ell$-step iterations, the remainder terms satisfy:
\begin{equation*}
\|\mathcal{R}_{\ell}(t,v,\varphi)\|_{W_{k,H}}
\ll_{\ell}
(1+t)\,e^{(1-\ell)t}\,
\|v\|_{W_{2\ell+2q_0+k+2,H}}\,
\|\varphi\|_{W_{\ell+q_0+1,1}}.
\end{equation*}
\end{proof}

After evaluating all the integrals and regrouping the contributions from
$\Psi_{\ell}(t,v,\varphi)$,
$I\left(t,v,\varphi\right)$ can be rewritten as 
\begin{equation}\label{equation:Expansion_of_I_sec5}
\begin{aligned}
    I(t,v,\varphi)&=\Psi_{\ell}(t,v,\varphi)+\mathcal{R}_{\ell}(t,v,\varphi) \\
&= \sum_{m=0}^{2\ell} \left( e^{(\lambda_{-} - m)t} C^{-}_{m}(v,\varphi)+ t\,e^{(\lambda_- - m)t} C^{P}_{m}(v,\varphi) + e^{(\lambda_{+} - m)t} C^{+}_{m}(v,\varphi) \right)
+ \mathcal{R}_{\ell}(t,v,\varphi) ,
\end{aligned}
\end{equation}
where each coefficient $C^{\pm}_{m}(v,\varphi)$ depends on the vector $v$ and the density function $\varphi$, and $C^{+}_{m}(v,\varphi) =0$ whenever $\sqrt{\mathcal{D}} > \nu(\pi)$. 

The rest step is to show that the coefficients $C^{\pm}_{m}(v,\varphi)$ belong to the Sobolev space  $W_{k}(H)$.

\begin{lem}\label{lem:matrix_invertible}
Let $\beta_{1},\dots,\beta_{s}\in\mathbb{R}$ be pairwise distinct, let $m_{1},\dots,m_{s}\in\mathbb{N},$ $
N:=\sum\limits_{j=1}^{s}m_{j},$
and let $\tau_{1},\dots,\tau_{N}\in\mathbb{R}$ be pairwise distinct. Define the scalar $N\times N$ matrix
\begin{equation*}
M_{\tau}
=
\bigl(\tau_i^r e^{\beta_j \tau_i}\bigr)_{
\substack{
1\le i\le N\\
1\le j\le s,\ 0\le r\le m_j-1
}},
\end{equation*}
where the columns are ordered first by $j$ and then by $r$. Then $M_{\tau}$ is invertible.
\end{lem}

\begin{coro}[Sobolev regularity of the coefficients]
\label{coro:Sobolev_regularity_of_polynomial_exponential_coefficients}
Keep the notation of Lemma~\ref{lem:matrix_invertible}, and set $N:=\sum_{j=1}^{s}m_j .$
Let $(\pi,H)$ be a unitary representation of $G$, and let $W_k(H)$ be the Sobolev space of order $k$ associated with this representation. Suppose
\begin{equation*}
F(t)=\sum_{j=1}^{s}\sum_{r=0}^{m_j-1} c_{j,r}\,t^{r}e^{\beta_j t},
\qquad c_{j,r}\in H .
\end{equation*}
Assume that there exist pairwise distinct real numbers $\tau_1,\dots,\tau_N\in\mathbb R$ such that
\begin{equation*}
F(\tau_i)\in W_k(H),
\qquad 1\le i\le N .
\end{equation*}
Then
\begin{equation*}
c_{j,r}\in W_k(H),
\qquad
1\le j\le s,\quad 0\le r\le m_j-1 .
\end{equation*}
\end{coro}

\begin{proof}
Consider the matrix
\begin{equation*}
M_{\tau}
=
\bigl(\tau_i^r e^{\beta_j\tau_i}\bigr)_{
\substack{
1\le i\le N\\
1\le j\le s,\ 0\le r\le m_j-1
}},
\end{equation*}
where the columns are ordered first by $j$ and then by $r$. By Lemma~\ref{lem:matrix_invertible}, the matrix $M_{\tau}$ is invertible.

Order the coefficient family $(c_{j,r})_{1\le j\le s,\ 0\le r\le m_j-1}$ in the same way as the columns of $M_{\tau}$, and regard it as a vector in $H^N$. Define
\begin{equation*}
y:=\bigl(F(\tau_1),\dots,F(\tau_N)\bigr)\in W_k(H)^N .
\end{equation*}
For every $1\le i\le N$, one has
\begin{equation*}
F(\tau_i)
=
\sum_{j=1}^{s}\sum_{r=0}^{m_j-1}
c_{j,r}\,\tau_i^{r}e^{\beta_j\tau_i}.
\end{equation*}
Hence, in $H^N$, $y=M_{\tau}(c_{j,r}).$
Since $M_{\tau}$ is invertible, it follows that $(c_{j,r})=M_{\tau}^{-1}y .$
The entries of $M_{\tau}^{-1}$ are complex scalars, and $W_k(H)$ is a complex linear subspace of $H$. Since $y\in W_k(H)^N$, we obtain
\begin{equation*}
(c_{j,r})\in W_k(H)^N .
\end{equation*}
Therefore
\begin{equation*}
c_{j,r}\in W_k(H),
\qquad
1\le j\le s,\quad 0\le r\le m_j-1 .
\end{equation*}
\end{proof}

Combining the discussion above, we obtain the final theorem for the asymptotic expansion of $I(t, v, \varphi)$ where $v$ is a joint eigenvector.

\begin{thm}
For a unitary representation $(\pi, H)$, consider a smooth joint eigenvector $v \in H^{\infty}$ satisfying: 
\begin{equation*}
d\pi(\Omega_{G})v=\mu v,
\qquad
d\pi(\Omega_{M})v=\upvarpi v .
\end{equation*}
Then for any eigenvalue $\mu$, $\upvarpi$, which determines discriminant $\mathcal{D}(\mu, \upvarpi)$ defined~\eqref{equation:discriminant},
and for any integer $\ell > \tfrac{n-1}{2}+\nu(\pi)$,  the sector average admits the following asymptotic expansion:
    \begin{equation*}
    I(t,v,\varphi)= \sum_{m=0}^{2\ell} \left( e^{(\lambda_{-} - m)t} C^{-}_{m}(v,\varphi)+ t\,e^{(\lambda_- - m)t} C^{P}_{m}(v,\varphi) + e^{(\lambda_{+} - m)t} C^{+}_{m}(v,\varphi) \right)
+\mathcal{R}_{\ell}(t,v,\varphi),
\end{equation*}
which converges in  $W_{k}(H)$, and for any $m$ and any $\star\in \{\pm, P\}$, $C^{\star}_{m}(v, \varphi)\in W_{k}(H)$.
\end{thm}

\section{Asymptotic Expansion of \texorpdfstring{$I(t,v,\varphi)$}{I(t,v,varphi)} for Arbitrary Vector \texorpdfstring{$v$}{v}}\label{sec:global_asymptotics}
In this section, we study the right regular representation on $L^{2}(X)$, where $X=\Gamma\backslash G$ is compact. Using the asymptotic expansion for joint eigenvector $v_{\mu, \upvarpi}$ associated to the Casimir eigenvalue $\mu$ of $\Omega_{G}$ and Casimir eigenvalue $ \upvarpi$ of $\Omega_{M}$ in Section~\ref{Section:Asymp_Irreducible_repnt_with_pure_imginary} and the Weyl law discussed in Section~\ref{section:weyl-type-upper-bound}, we obtain the final asymptotic expansion for arbitrary vector $v \in L^{2}(X)$.

Now we begin with decomposing an arbitrary vector $v$ into summation of joint eigenvector $v_{\mu, \upvarpi}$.
Recall $1+\mathfrak L_\Delta=1-\mathfrak R_{\Omega_G}+2\mathfrak R_{\Omega_K}$.
Since $1+\mathfrak L_\Delta$ is positive, essentially self-adjoint, and elliptic on the compact space $X=\Gamma\backslash G$, its spectrum is discrete, and we have the
orthogonal decomposition
\begin{equation*}
L^2(X)=\widehat{\bigoplus_{\Lambda\in\mathrm{Spec}(1+\mathfrak L_\Delta)}} E_\Lambda(1+\mathfrak L_\Delta),
\end{equation*}
where each eigenspace $E_\Lambda(1+\mathfrak L_\Delta)$ is finite-dimensional.

Because $\mathfrak R_{\Omega_G}$ and $\mathfrak R_{\Omega_M}$ commute with $1+\mathfrak L_\Delta$, each $E_\Lambda(1+\mathfrak L_\Delta)$ is invariant
under both $\mathfrak R_{\Omega_G}$ and $\mathfrak R_{\Omega_M}$. Since $E_\Lambda(1+\mathfrak L_\Delta)$ is a finite-dimensional
Hermitian space and the restrictions of $\mathfrak R_{\Omega_G}$ and $\mathfrak R_{\Omega_M}$ to $E_\Lambda(1+\mathfrak L_\Delta)$
are commuting self-adjoint endomorphisms, they are simultaneously diagonalizable.
Hence for each $\Lambda$ one has an orthogonal decomposition
\begin{equation*}
E_\Lambda(1+\mathfrak L_\Delta)
=
\bigoplus_{(\mu,\varpi)\in\mathcal J(\Lambda)} E_{\Lambda,\mu,\varpi},
\end{equation*}
where $\mathcal J(\Lambda)$ is the finite set of joint eigenvalue pairs
$(\mu,\varpi)$ occurring in $E_\Lambda(1+\mathfrak L_\Delta)$, and where on each block
$E_{\Lambda,\mu,\varpi}$ one has
\begin{equation*}
1+\mathfrak L_\Delta=\Lambda\,\mathrm{Id},
\qquad
\mathfrak R_{\Omega_G}=\mu\,\mathrm{Id},
\qquad
\mathfrak R_{\Omega_M}=\varpi\,\mathrm{Id}.
\end{equation*}

By elliptic regularity, every eigenspace $E_\Lambda(1+\mathfrak L_\Delta)$ consists of smooth vectors,
hence
\begin{equation*}
E_{\Lambda,\mu,\varpi}\subset E_\Lambda(1+\mathfrak L_\Delta)\subset C^\infty(X).
\end{equation*}

To establish the convergence of $I(t,v,\varphi)$ for an arbitrary vector $v$ from a general unitary representation,
we proceed in several steps.
First, we decompose $v$ into joint eigenvector $v_{\mu, \upvarpi}$:
\begin{equation*}
v = \underset{(\mu, \upvarpi)}{\sum} v_{\mu, \upvarpi},\qquad v_{\mu, \upvarpi}\in E_{\Lambda,\mu,\upvarpi}.
\end{equation*}
Recall the discriminant $\mathcal{D}(\mu, \upvarpi):=\mathcal{D} $ defined in~\eqref{equation:discriminant}.

\subsection{Convergence of the Spectral Summation}
In this section, we combine the Weyl-type upper bounds from Section~\ref{section:weyl-type-upper-bound} with the estimate of $\Psi_{\ell}(t,v_{\nu,\upvarpi},\varphi)$ and $\mathcal{R}_{\ell}(t,v_{\nu,\upvarpi},\varphi)$ established in Section~\ref{section:asymptotic_expansion} to prove the convergence of the spectral sums over $(\mu, \upvarpi)$ of the main term and remainder term in the asymptotic expansion of $I(t,v,\varphi)$.

We fix an integer $k > \frac{n(n+1)}{4}$ and an integer $\ell >\frac{n-1}{2}+\nu(\Gamma)$ for this section.  Associated with the lattice $\Gamma$ is the spectral gap parameter $\nu(\Gamma)$ introduced in Section~\ref{representationtheory}. 
Throughout the paper, the Sobolev norm of $\varphi$ is required only up to order $\ell + q_0 + 1$. For simplicity, we introduce $\beta_1$ to uniformly control the Sobolev norms of both $v$ and $\varphi$.

\begin{defi}
For any fixed $\ell>\frac{n-1}{2}+\nu(\Gamma)$, 
we define the total main contribution by
\begin{equation*}
\mathcal{R}_{\ell}(t,v,\varphi)= \underset{(\mu, \upvarpi)}{\sum}\mathcal{R}_{\ell}(t,v_{\mu, \upvarpi},\varphi),
\end{equation*}
where $\mathcal{R}_{\ell}(t,v_{\mu, \upvarpi},\varphi)$ is introduced in Lemma~\ref{lem:uniform_upper_bound_R}.
\end{defi}

\begin{prop}[Convergence of $\mathcal{R}_{\ell}(t,v,\varphi)$]
\label{prop:convergence_of_tilde_R}
Recall $q_0$ defined in Lemma~\ref{cor:uniform_ODE_bootstrap_sobolev_growth}. Set
\begin{equation*}
d_{Y}:=\dim(\Gamma\backslash G/K),
\qquad
\beta_1:=2\ell+q_0+k+2+\frac{d_{Y}+d_{K}}{2}.
\end{equation*}
Fix an integer $\beta>\beta_1$.
Then for any $v\in W_{\beta,2}(X)$ orthogonal to constants and any smooth
density function $\varphi\in C^\infty(K)$, one has
\begin{equation*}
\|\mathcal{R}_{\ell}(t,v,\varphi)\|_{W_{k,2}}
\ll_{\ell,\Gamma,\beta}
(1+t)\,e^{(1-\ell)t}\,
\|v\|_{W_{\beta,2}}\,
\|\varphi\|_{W_{\beta,1}}.
\end{equation*}
\end{prop}

\begin{proof}
Let $P_{\Lambda,\mu,\varpi}$ be the orthogonal projection onto
$E_{\Lambda,\mu,\varpi}$, and write
\begin{equation*}
v=\sum_{\Lambda}\ \sum_{(\mu,\varpi)\in\mathcal J(\Lambda)} v_{\Lambda,\mu,\varpi},
\qquad
v_{\Lambda,\mu,\varpi}:=P_{\Lambda,\mu,\varpi}v.
\end{equation*}
Moreover,
\begin{equation}
\label{eq:sobolev_decomp_prop_final_corrected}
\|v\|_{W_{\beta,2}}^2
=
\sum_{\Lambda}\ \sum_{(\mu,\varpi)\in\mathcal J(\Lambda)}
\Lambda^\beta \|v_{\Lambda,\mu,\varpi}\|_{L^2}^2,
\end{equation}
and
\begin{equation}
\label{eq:block_weight_prop_final_corrected}
\|v_{\Lambda,\mu,\varpi}\|_{W_{2\ell+q_0+k+2,2}}
=
\Lambda^{\ell+\frac{k+q_0}{2}+1}\|v_{\Lambda,\mu,\varpi}\|_{L^2}.
\end{equation}

For $N\ge1$, define the finite truncation
\begin{equation*}
v^{(N)}
:=
\sum_{\Lambda\le N}\ \sum_{(\mu,\varpi)\in\mathcal J(\Lambda)}
v_{\Lambda,\mu,\varpi}.
\end{equation*}
Since $1+\mathfrak L_\Delta$ has discrete spectrum and each $E_\Lambda(1+\mathfrak L_\Delta)$ is finite-dimensional,
this is a finite sum.

By Lemma~\ref{lem:uniform_upper_bound_R} and
\eqref{eq:block_weight_prop_final_corrected},
\begin{align}
\|\mathcal{R}_\ell(t,v^{(N)},\varphi)\|_{W_{k,2}}
&\le
\sum_{\Lambda\le N}\ \sum_{(\mu,\varpi)\in\mathcal J(\Lambda)}
\|\mathcal{R}_\ell(t,v_{\Lambda,\mu,\varpi},\varphi)\|_{W_{k,2}}
\nonumber\\
&\ll_{\ell}
(1+t)\,e^{(1-\ell)t}\,
\|\varphi\|_{W_{\beta,1}}
\sum_{\Lambda\le N}\ \sum_{(\mu,\varpi)\in\mathcal J(\Lambda)}
\Lambda^{\ell+\frac{k+q_0}{2}+1}\|v_{\Lambda,\mu,\varpi}\|_{L^2}.
\label{eq:truncation_bound_prop_final_corrected}
\end{align}

Apply Cauchy-Schwarz inequality:
\begin{align}
&\sum_{\Lambda\le N}\ \sum_{(\mu,\varpi)\in\mathcal J(\Lambda)}
\Lambda^{\ell+\frac{k+q_0}{2}+1}\|v_{\Lambda,\mu,\varpi}\|_{L^2}
\nonumber=
\sum_{\Lambda\le N}\ \sum_{(\mu,\varpi)\in\mathcal J(\Lambda)}
\Lambda^{(2\ell+q_0+k+2-\beta)/2}\Lambda^{\beta/2}\|v_{\Lambda,\mu,\varpi}\|_{L^2}
\nonumber\\
&\qquad\le
\Biggl(
\sum_{\Lambda\le N}\ \sum_{(\mu,\varpi)\in\mathcal J(\Lambda)}
\Lambda^\beta\|v_{\Lambda,\mu,\varpi}\|_{L^2}^2
\Biggr)^{1/2}
\Biggl(
\sum_{\Lambda\le N}\ \sum_{(\mu,\varpi)\in\mathcal J(\Lambda)}
\Lambda^{2\ell+q_0+k+2-\beta}
\Biggr)^{1/2}.
\label{eq:CS_prop_final_corrected}
\end{align}
By \eqref{eq:sobolev_decomp_prop_final_corrected}, the first factor is bounded by $\|v\|_{W_{\beta,2}}.$

Since $\beta>\beta_1$, 
we have $2\ell+q_0+k+2-\beta< -\frac{d_{Y}+d_{K}}{2}.$
Therefore Corollary~\ref{cor:block_summability_for_A} yields
\begin{equation}
\label{eq:R_block_sum_simplified_corrected}
\sum_{\Lambda}\ \sum_{(\mu,\varpi)\in\mathcal J(\Lambda)}
\Lambda^{2\ell+q_0+k+2-\beta}<\infty.
\end{equation}
Substituting \eqref{eq:R_block_sum_simplified_corrected} into
\eqref{eq:CS_prop_final_corrected}, and then into
\eqref{eq:truncation_bound_prop_final_corrected}, we obtain the uniform estimate
\begin{equation}
\label{eq:R_uniform_trunc_simplified_corrected}
\|\mathcal{R}_\ell(t,v^{(N)},\varphi)\|_{W_{k,2}}
\ll_{\ell,\Gamma,\beta}
(1+t)\,e^{(1-\ell)t}\,
\|v\|_{W_{\beta,2}}\,
\|\varphi\|_{W_{\beta,1}}.
\end{equation}

Now let $M>N$. Applying the same argument to
\begin{equation*}
v^{(M)}-v^{(N)}
=
\sum_{N<\Lambda\le M}\ \sum_{(\mu,\varpi)\in\mathcal J(\Lambda)}
v_{\Lambda,\mu,\varpi},
\end{equation*}
we get
\begin{align*}
\|\mathcal{R}_\ell(t,v^{(M)}-v^{(N)},\ &\varphi)\|_{W_{k,2}}
\ll
(1+t)\,e^{(1-\ell)t}\,
\|\varphi\|_{W_{\beta,1}} \\
&\cdot\Biggl(
\sum_{N<\Lambda\le M}\ \sum_{(\mu,\varpi)\in\mathcal J(\Lambda)}
\Lambda^\beta\|v_{\Lambda,\mu,\varpi}\|_{L^2}^2
\Biggr)^{1/2}
\Biggl(
\sum_{N<\Lambda\le M}\ \sum_{(\mu,\varpi)\in\mathcal J(\Lambda)}
\Lambda^{2\ell+q_0+k+2-\beta}
\Biggr)^{1/2}.
\end{align*}
By \eqref{eq:sobolev_decomp_prop_final_corrected}, the first factor tends to $0$
as $M,N\to\infty$, and by \eqref{eq:R_block_sum_simplified_corrected}, the second
factor is bounded. Hence $\mathcal{R}_\ell(t,v^{(N)},\varphi)$
is a Cauchy sequence in $W_{k,2}$.
Since $W_{k,2}$ is complete, it converges to some limit, which we denote by
$\mathcal{R}_\ell(t,v,\varphi)$.
Passing to the limit in \eqref{eq:R_uniform_trunc_simplified_corrected} yields
\begin{equation*}
\|\mathcal{R}_\ell(t,v,\varphi)\|_{W_{k,2}}
\ll_{\ell,\Gamma,\beta}
(1+t)\,e^{(1-\ell)t}\,
\|v\|_{W_{\beta,2}}\,
\|\varphi\|_{W_{\beta,1}}.
\end{equation*}
\end{proof}

\begin{defi}
For any fixed $\ell>\frac{n-1}{2}+\nu(\Gamma)$, 
we define the total main contribution by
\begin{equation*}
\Psi_{\ell}(t,v,\varphi)
=
\sum_{(\mu,\upvarpi)}
\Psi_{\ell}(t,v_{\mu,\upvarpi},\varphi),
\end{equation*}
where $\Psi_{\ell}(t,v_{\mu,\upvarpi},\varphi)$ is defined in Definition~\ref{defi:Psi_joint_engenvector}.
\end{defi}

Then by analogy with Proposition~\ref{prop:convergence_of_tilde_R},  the sum $\Psi_{\ell}(t,v,\varphi)$ is convergent in $W_{k, 2}(X)$:
\begin{prop}\label{coro:convergence_of_Psi}
Let $\ell>\frac{n-1}{2}+\nu(\Gamma)$.
Assume the same conditions as in Proposition~\ref{prop:convergence_of_tilde_R}. Then one has
\begin{equation*}
\|\Psi_{\ell}(t,v,\varphi)\|_{W_{k,2}}
\ll_{\ell,\Gamma,\beta} (1+t)\,e^{(\frac{1-n}{2}+\nu(\Gamma))t}\,
\|v\|_{W_{\beta,2}}\,
\|\varphi\|_{W_{\beta,1}}.
\end{equation*}
\end{prop}
The proof is identical to that of Proposition~\ref{prop:convergence_of_tilde_R},
using Lemma~\ref{lem:uniform_upper_bound_of_psi} instead of
Lemma~\ref{lem:uniform_upper_bound_R}.

Combining Proposition~\ref{prop:convergence_of_tilde_R}
and Proposition ~\ref{coro:convergence_of_Psi}, one has
\begin{equation*}
I(t,v,\varphi)=\Psi_{\ell}(t,v,\varphi)+\mathcal{R}_{\ell}(t,v,\varphi),
\end{equation*}
which converges in  $W_{k,2}(X).$
\subsection{Main Theorem}
As discussed in Section~\ref{section:asymptotic_expansion}, we derive the asymptotic expansion~\eqref{equation:Expansion_of_I_sec5} for $I(t,v,\varphi)$ and provide the uniform estimate of the coefficients.

Assembling the preceding results, we obtain the following main theorem.

\begin{thm}\label{theorem:main_vector}
Recall the constants $q_0$, $k$, $\ell$, $d_{Y}$, and $\beta$ appearing in Proposition~\ref{prop:convergence_of_tilde_R}. Recall that $X = \Gamma \backslash G$ is compact.
Then for any vector $v \in W_{k, 2}(X)$ and $\varphi\in C^{\infty}(K)$, there exists $C_m^{ui}( v_{\mu, \upvarpi},\varphi) \in W_{k,2}(X), ui \in \{+,-,P\}$, such that for all $t > 0$, one has
\begin{align*}
I(t,v,\varphi)
=&\;
\underset{(\mu, \upvarpi)}{\sum}
\Biggl(
\sum_{m=0}^{2\ell}
C_m^+( v_{\mu, \upvarpi},\varphi) \,e^{(\lambda_+-m) t}
+
\sum_{m=0}^{2\ell}
\Bigl(
C_m^-( v_{\mu, \upvarpi},\varphi)
+
C_m^P( v_{\mu, \upvarpi},\varphi)\,t
\Bigr)e^{(\lambda_- -m)t}
\Biggr)
\\
&
+
\mathcal{R}_\ell(t,v,\varphi),
\end{align*}
where the series converges in $W_{k, 2}(X)$, and 
\begin{equation*}
\|\mathcal{R}_\ell(t,v,\varphi)\|_{W_{k,2}}
\ll_{\ell,\Gamma,\beta}
(1+t)\,e^{(1-\ell)t}\,
\|v\|_{W_{\beta,2}}\,
\|\varphi\|_{W_{\beta,1}}.
\end{equation*}
Moreover,  $C_m^+( v_{\mu, \upvarpi},\varphi)=0$ whenever
$\sqrt{\mathcal D(\mu, \upvarpi)}> \nu(\Gamma)$.
\end{thm}

\begin{proof}
 Since the averaging
operator $I(t,\cdot,\varphi)$ is linear and continuous in $L^{2}(X)$, we obtain
\begin{equation*}
\begin{aligned}
    I(t,v,\varphi)=\underset{(\mu, \upvarpi)}{\sum} I(t, v_{\mu, \upvarpi},\varphi)
    =\underset{(\mu, \upvarpi)}{\sum} \Psi(t, v_{\mu, \upvarpi},\varphi)+ \underset{(\mu, \upvarpi)}{\sum}R_{\ell}(t, v_{\mu, \upvarpi},\varphi).
\end{aligned}
\end{equation*}
By Proposition~\ref{prop:convergence_of_tilde_R} and~\ref{coro:convergence_of_Psi}, both sum converges in $W_{k, 2}(X)$.
For each fixed $\mathcal{D}(\mu, \upvarpi)$, recall the asymptotic expansion~\eqref{equation:Expansion_of_I_sec5} for $I(t,v,\varphi)$:
\begin{align*}
I(t, v_{\mu, \upvarpi},\varphi)
=
\sum_{m=0}^{2\ell}
C_m^{+}( v_{\mu, \upvarpi},\varphi)e^{(\lambda_+-m)t}
+
\sum_{m=0}^{2\ell}
\bigl(
C_m^{-}( v_{\mu, \upvarpi},\varphi)
+
C_m^{P}( v_{\mu, \upvarpi},\varphi)\,t
\bigr)e^{(\lambda_-m)t}
+
\mathcal{R}_\ell(t, v_{\mu, \upvarpi},\varphi).
\end{align*}
And as details in Corollary~\ref{coro:Sobolev_regularity_of_polynomial_exponential_coefficients}, we know that the coefficients $C_m^{ui}( v_{\mu, \upvarpi},\varphi) \in W_{k,2}(X), ui \in \{+,-,P\}$.

Proposition~\ref{prop:convergence_of_tilde_R}
and Proposition ~\ref{coro:convergence_of_Psi} yields the following expansion:
\begin{align*}
I(t,v,\varphi)
= &\;
\underset{(\mu, \upvarpi)}{\sum}
\Biggl(
\sum_{m=0}^{2\ell} C_m^{+}( v_{\mu, \upvarpi},\varphi)e^{(\lambda_+-m)t}
+
\sum_{m=0}^{2\ell}
\bigl(
C_m^{-}( v_{\mu, \upvarpi},\varphi)
+
C_m^{P}( v_{\mu, \upvarpi},\varphi)\,t
\bigr)e^{(\lambda_-m)t}
\Biggr)
\\
&\;+
\mathcal{R}_\ell(t,v,\varphi),
\end{align*}
where 
\begin{equation*}
\|\mathcal{R}_\ell(t,v,\varphi)\|_{W_{k,2}}
\ll_{\ell,\Gamma,\beta}
(1+t)\,e^{(1-\ell)t}\,
\|v\|_{W_{\beta,2}}\,
\|\varphi\|_{W_{\beta,1}}.
\end{equation*}

Recall the discussion about matrix coefficient filter in the beginning of Section~\ref{subsection:Third_real_part}, for $\lambda_+ >\frac{1-n}{2}+\nu(\Gamma)$, the corresponding coefficients vanishes. 
Finally, the condition $\ell>n+1$ guarantees whenever one expects a meaningful expansion.
\end{proof}

\newpage
\bibliographystyle{plain} 
\bibliography{dynamics} 
\end{document}